\DeclareMathAlphabet{\mathsf}{OT1}{\sfdefault}{m}{n}
\SetMathAlphabet{\mathsf}{bold}{OT1}{\sfdefault}{b}{n}
\numberwithin{equation}{section}
\definecolor{grey_pers}{RGB}{69 90 100}
\definecolor{WIMgreen}{RGB}{60 134 132}
\definecolor{red_pers}{RGB}{204 37 41}
\definecolor{UMblue}{RGB}{4 47 86}
\definecolor{myteal}{RGB}{0 123 137}
\definecolor{dartmouthgreen}{rgb}{0.05, 0.5, 0.06}\definecolor{cobalt}{rgb}{0.0, 0.28, 0.67}\definecolor{coolblack}{rgb}{0.0, 0.18, 0.39}
\definecolor{glaucous}{rgb}{0.38, 0.51, 0.71}\definecolor{hooker\'sgreen}{rgb}{0.0, 0.44, 0.0}\definecolor{lemonchiffon}{rgb}{1.0, 0.98, 0.8}\definecolor{oucrimsonred}{rgb}{0.6, 0.0, 0.0}\definecolor{radicalred}{rgb}{1.0, 0.21, 0.37}\definecolor{raspberry}{rgb}{0.89, 0.04, 0.36}\definecolor{royalazure}{rgb}{0.0, 0.22, 0.66}
\definecolor{dex}{RGB}{138 18 34}
\theoremstyle{plain}
\newtheorem{theorem}{Theorem}[section]
\newtheorem{proposition}[theorem]{Proposition}
\newtheorem{lemma}[theorem]{Lemma}
\newtheorem{corollary}[theorem]{Corollary}
\theoremstyle{definition}
\theoremstyle{assumption}
\theoremstyle{remark}
\newtheorem{remark}[theorem]{Remark}
\def\supp{\operatorname{supp}}
\def\TV{\operatorname{TV}}
\def\H{\mathscr{H}}
\def\E{\mathbb{E}}
\def\G{\mathbb{G}}
\def\N{\mathbb{N}}
\def\M{\mathcal{M}}
\def\P{\mathbb{P}}
\def\N{\mathbb{N}}
\def\R{\mathbb{R}}
\definecolor{darkred}{rgb}{0,0.6,0}
\def\X{\mathbf{X}}
\def\cX{\mathcal{X}}
\def\cS{\mathcal{S}}
\def\Var{\mathrm{Var}}
\newcommand{\fset}{\Lambda}
\newcommand{\cG}{\mathscr{G}}
\newcommand{\ep}{\varepsilon}
\newcommand{\II}{\mathcal{I}}
\newcommand{\PP}{\mathbb{P}}
\newcommand{\Pro}{\mathbb{P}}
\renewcommand{\hat}{\widehat}
\newcommand{\e}{\mathrm{e}}
\renewcommand{\tilde}{\widetilde}%
\renewcommand{\d}{\mathop{}\!\mathrm{d} }
\newcommand{\lebesgue}{\boldsymbol{\lambda}}
\newcommand{\qv}[1]{\langle#1\rangle}
\newcommand{\1}{\mathbf{1}}
\newcommand{\const}{\mathfrak C}
\newcommand*\diff{\mathop{}\!\mathrm{d} }
\newcommand{\one}{\mathbf{1}}
\newcommand{\vertiii}[1]{{\left\vert\kern-0.25ex\left\vert\kern-0.25ex\left\vert #1
\right\vert\kern-0.25ex\right\vert\kern-0.25ex\right\vert}}
\def\supp{\mathrm{supp}}
\let\originalleft\left
\let\originalright\right
\renewcommand{\left}{\mathopen{}\mathclose\bgroup\originalleft}
\renewcommand{\right}{\aftergroup\egroup\originalright}
\title{\fontsize{16}{19} \selectfont Mixing it up: A general framework for Markovian statistics}
\author{Niklas Dexheimer\thanks{Aarhus University, Department of Mathematics, Ny Munkegade 118, 8000 Aarhus C, Denmark. \newline Email: \href{mailto:dexheimer@math.au.dk}{dexheimer@math.au.dk}/\href{mailto:strauch@math.au.dk}{strauch@math.au.dk}} \and Claudia Strauch\footnotemark[1] \and Lukas Trottner\thanks{Universit\"at Mannheim, Institut f\"ur Mathematik, B6 26, 68159 Mannheim, Germany. \newline Email: \href{mailto:ltrottne@mail.uni-mannheim.de}{ltrottne@mail.uni-mannheim.de}}}
\date{\vspace{-5ex}}
\begin{document}
\maketitle

\begin{abstract}
Up to now, the nonparametric analysis of multidimensional continuous-time Markov processes has focussed strongly on specific model choices, mostly related to symmetry of the semigroup. While this approach allows to study the performance of estimators for the characteristics of the process in the minimax sense, it restricts the applicability of results to a rather constrained set of stochastic processes and in particular hardly allows incorporating jump structures. 
As a consequence, for many models of applied and theoretical interest, no statement can be made about the robustness of typical statistical procedures beyond the beautiful, but limited framework available in the literature. 
To close this gap, we identify $\beta$-mixing of the process and heat kernel bounds on the transition density representing controls on the long- and short-time transitional behavior, resp., as a suitable combination to obtain $\sup$-norm and $L^2$ kernel invariant density estimation rates  matching the well-understood case of reversible multidimenisonal diffusion processes and outperforming density estimation based on discrete i.i.d.\ or weakly dependent data. Moreover, we demonstrate how up to $\log$-terms, optimal $\sup$-norm \textit{adaptive} invariant density estimation can be achieved within our general framework based on tight uniform moment bounds and deviation inequalities for empirical processes associated to additive functionals of  Markov processes.  The underlying assumptions are verifiable with classical tools from stability theory of continuous time Markov processes and PDE techniques, which opens the door to evaluate statistical performance for a vast amount of popular Markov models. We highlight this point by showing how multidimensional jump SDEs with Lévy driven jump part under different coefficient assumptions can be seamlessly integrated into our framework, thus establishing novel adaptive $\sup$-norm estimation rates for this class of processes.
\end{abstract}
\noindent \textbf{Keywords:} ergodic Markov processes; $\beta$-mixing; deviation inequalities; density estimation; sup-norm risk; jump type SDEs

\vspace{.2cm}
\noindent\textbf{MSC 2020:} {62M05, 62G20, 60G10, 60J25}{}

\normalsize
\section{Introduction}
There exist various probabilistic concepts that permit the investigation of quantitative ergodic properties of Markov processes, providing a number of approaches to analyzing the rate of convergence of the process to equilibrium.
Such results actually present precious tools for an adequate statistical modelling of complex systems. 
Markov models, especially of (jump) diffusion-type, find numerous applications in biology, chemistry, natural resource management, computer vision, Bayesian inference in machine learning, cloud computing and many more \citep{alvarez2004,brosse2018,du2019,durmus2019,fuchs2013,grenander1994,tamura2015,tzen2019}, and ergodicity can usually be seen as some kind of minimum requirement for the development of a fruitful statistical theory. 
While the probabilistic picture of quantitative ergodic properties is now quite clear, there are still open questions regarding the statistical implications. With this paper, we want to contribute to closing this gap, paying particular attention to a general Markovian multidimensional setting.

In contrast to the highly-developed statistical theory for scalar diffusion processes, there are relatively few references for nonparametric or high-dimensional general Markov models.
To not let sampling effects obscure the statistical implications, it is natural to base the statistical analysis in this context on a continuous observation scheme (i.e., one assumes that a complete trajectory of the process is available). 
A substantial point of reference for a thorough statistical analysis of ergodic multivariate diffusion processes is provided by the article \citep{dalrei07} where the fundamental question of asymptotic statistical equivalence is investigated. 
Apart from its principal central statement, the work also nicely demonstrates the implications of probabilistic properties of processes on quantitative statistical results. 
Specifically, heat kernel bounds and the spectral gap inequality are used to prove tight variance bounds for integral functionals which in turn provide fast convergence rates for the specific problem of invariant density estimation.
Similar techniques can be used for the in-depth analysis of other statistical questions such as (adaptive) estimation of the drift vector of an ergodic diffusion (cf.~\citep{str15}, \citep{str16}). 
The results in \citep{dalrei07,str15,str16} are developed for diffusion processes with drift of gradient-type and unit diffusion matrix. 
While in this specific case the reversibility assumption is directly verified, the condition of symmetry of the process presents a significant constraint, in particular for solutions of SDEs with jump noise. 

More recently, a Bayesian approach to drift estimation of multivariate diffusion processes is undertaken in \citep{nickl20} and \citep{giordano2020}. Whilst \citep{giordano2020} work in a reversible setting since their approach relies on placing a Gaussian prior on the potential $B$ of the drift $b = \nabla B$ instead of tackling the drift directly, \citep{nickl20} approach drift estimation for non-reversible diffusions by employing PDE techniques to a penalized likelihood estimator. 
This opens up an excitingly different viewpoint on the statistical handling of multivariate diffusion processes and in case of \citep{nickl20} avoids the need for reversibility, but both approaches  restrict the setting to assumed periodicity of the drift coefficient.
While this assumption (similar to reversibility) can certainly be justified for specific applications, the approach does not yet provide an answer to the question of how to conduct a statistical analysis of multidimensional Markov processes without strong structural constraints on the coefficients. 
From a different perspective, the current preprint \citep{amorino2020invariant} yields the remarkable observation that quantitatively similar statistical results as in the reversible diffusion case can also be proven for jump diffusions with L\'evy-driven jump part, without the need to rely on a reversible or periodic setting, by focusing on assumptions on the characteristics of the process which guarantee exponential ergodicity as the driving force of the statistical approach.

Another branch of the literature that does not consider specific structural assumptions on the process is based on the so called Castellana--Leadbetter condition or variations thereof \citep{casta1986,bosq1997,leblanc97}, which imposes finiteness of the integrated uniform distance between the density of the bivariate law of $(X_0,X_t)$ of a stationary Markov process $\X$ with stationary density $\rho$ and the product density $\rho \otimes \rho$. This assumption yields dimension independent parametric estimation rates of the invariant density and is thus not suitable for our goal to extend the dimension dependent minimax optimal estimation rates for continuous diffusion processes to more general classes of multidimensional Markov processes, introduced below.

Throughout, we suppose that $(\X, (\PP^x)_{x \in \R^d})$ is a non-explosive Borel right Markov process with state space $(\R^d, \mathcal{B}(\R^d))$ and semigroup $(P_t)_{t \geq 0}$ defined by
\[P_t(x,B) \coloneqq \PP^x(X_t \in B), \quad x \in \R^d, B \in \mathcal{B}(\R^d),\]
see Definition 8.1 in \citep{sharpe1988} for an exact characterization. Without going into distracting technical details, it is relevant for our purposes that Borel right Markov processes are possibly the most general class of continuous time, right continuous Markov processes having the strong Markov property that permits deep connections to potential theory. This class of Markov processes includes the more specific class of standard processes, which form the basis of the classical textbook \citep{blumenthal1968}, and even more specifically Feller processes, i.e., Markov processes with a strongly continuous semigroup mapping $\mathcal{C}_0(\R^d)$, the space of continuous functions on $\R^d$ vanishing at infinity, onto itself.
Under regularity assumptions on the coefficients, the exemplary class of (jump) diffusion processes that we study in detail later on belongs to the class of Feller processes and hence falls into our general probabilistic regime. Moreover, due to their natural embedding into potential theory, Borel right Markov processes are the object of stability analysis of time-continuous Markov processes pioneered by Meyn and Tweedie in the 1990s \citep{DownMeynTweedie1995,MeynTweedie1993,MeynTweedie1993b,meyn_tweedie_1993}, in which the long-time behaviour is quantitatively associated with Lyapunov drift criteria. This approach is central to our probabilistic modelling. We therefore work, as a minimal requirement for stability, in an ergodic setting for $\X$ throughout the paper. That is, the following assumption is in place:
\begin{enumerate}[leftmargin=*,label=($\mathscr{A}$0),ref=($\mathscr{A}$0)]
\item
The marginal laws of $\X$ are absolutely continuous, i.e., for any $t > 0$ and $x \in \R^d$, there exists a measurable function $p_t\colon \R^d \times \R^d \to \R_+$ such that
\[P_t(x,B) = \int_B p_t(x,y) \diff{y}, \quad B \in \mathcal{B}(\R^d),\]
and, moreover, $\X$ admits a unique absolutely continuous invariant probability measure $\mu$, i.e., there exists a density $\rho\colon \R^d \to \R_+$ such that $\diff \mu=\rho\d\lebesgue$ and
\[\PP^\mu(X_t \in B) \coloneqq \int_{\R^d} P_t(x,B) \, \mu(\diff x) = \int_{\R^d} \int_B p_t(x,y) \rho(x) \d{y} \d{x} = \int_B \rho(x) \d{x} = \mu(B)\]
for any Borel set $B$.\label{invariant probability measure assumption}
\end{enumerate}
We abbreviate $\PP^\mu = \PP$, $\E^\mu=\E$ and denote $\mu(g) = \int g \diff{\mu}$ for $g \in L^1(\mu)$ or $g \geq 0$. Note that in \ref{invariant probability measure assumption} existence of a density $\rho$ of the invariant distribution $\mu$ is not an additional requirement on $\X$, but is guaranteed by the Radon--Nikodym theorem thanks to the definition of invariance and the existence of densities for the transition operators. 

Turning away from Lyapunov criteria for general ergodic Markov processes, the long-time behaviour of Markovian semigroups is also known to be linked to functional inequalities.
The most familiar setting is the $L^2$ framework with its equivalence between the corresponding Poincar\'e inequalities and exponential decay of the Markovian semigroup.
The relation between both approaches in terms of quantifying ergodic properties of Markov processes is studied in \citep{baketal08}.

We want to understand the interaction between the probabilistic concepts and \textit{statistical} properties. In order to obtain a clear picture and benchmark results that are not distorted by discretization errors, we consider a statistical framework including the standing assumption that a continuous observation of a trajectory $\mathbf{X}^T = (X_t)_{t \in [0,T]}$ of $\X$ is available. For the analysis of statistical methods (e.g., for estimating the characteristics of $\X$), variance bounds and deviation inequalities are of central importance.
Section \ref{sec:basic} focuses on the analysis of the variance of additive functionals of the form $\int_0^tf(X_s)\diff s$ for the ergodic process $\X$.
We introduce sets of general assumptions on transition and invariant density which allow to prove tight variance bounds (cf.~Propositions \ref{prop:varextra} and \ref{prop:varmulti}). Here, we consider an on-diagonal heat kernel bound to regulate the short-time transitional behavior of the process and either local uniform transition density convergence to the invariant distribution at sufficient speed for any dimension $d\in \N$ or exponential $\beta$-mixing in dimension $d \geq 2$ to obtain tight controls on the long-time transitions of the process. The combination of heat kernel bound and local uniform transition density convergence can be interpreted as a localized version of the Castellana--Leadbetter condition that separates the short- and long time effects and considerably weakens the inherent assumptions on the speed at which the law of $X_t$ approaches a singular distribution as $t \downarrow 0$ in higher dimensions. We give a detailed analysis of this condition. We demonstrate how total variation convergence at sufficient speed implies the local uniform transition density assumption and argue that in case of $\mu$-a.s.\ exponential ergodicity of the process, exponential $\beta$-mixing and local uniform transition density convergence are essentially equivalent, giving a homogeneous picture of our different sets of assumptions.   

In Section \ref{sec: deviation} we proceed by showing how the  $\beta$-mixing property of $\X$\textemdash which is satisfied for a wide range of Markov processes appearing in applied and theoretical probability theory\textemdash is reflected in uniform moment bounds on  empirical processes associated to integral functionals of $\X$. 
More precisely, for countable classes $\mathcal{G}$ of bounded measurable functions $g$, we establish an upper bound on
\[\Big(\E\Big[\sup_{g \in \mathcal{G}}\Big\vert\frac{1}{T}\int_0^T g(X_s) \diff{s} - \int g\d\mu\Big\vert^p \Big]\Big)^{1\slash p}, \quad p \geq 1,\]
(cf.~Theorem \ref{Bernstein}) stated in terms of entropy integrals related to $\mathcal{G}$ and the variance of the integral functionals. 
This result holds for $\beta$-mixing Borel right processes on general state spaces without any assumptions on the existence of transition densities, i.e., Assumption \ref{invariant probability measure assumption} is diminished to stationarity which further increases the applicability of our findings for future investigations. Such moment bounds and associated uniform deviation inequalities are generally the focal point for efficient implementation of adaptive estimation procedures, both for the $\sup$-norm as well as the pointwise and integrated $L^2$ risk. In our concrete estimation context, we use the uniform moment bounds together with the variance bounds from Section \ref{sec:basic} to establish oracle-type deviation inequalities for the $\sup$-norm risk of a kernel invariant density estimator that is essential for the adaptive estimation scheme considered in Section \ref{sec: density estimation} that we describe below. Our motivation to study $\sup$-norm estimation is not only rooted in the higher degree of intepretability of such statements compared to the pointwise $L^2$ risk and the general usefulness of mathematical results obtained along the way, but also comes from the observation that certain problems from applied probability can only be handled with statistical tools, when $\sup$-norm estimation bounds of a quantitiy of interest are available. This point is highlighted in the accompanying article \cite{christ21}, where the general framework presented in this paper is implemented for the development of data-driven stochastic optimal control strategies for diffusions and Lévy processes.

Making the mixing behaviour of the process a cornerstone of the statistical analysis is completely natural when comparing to discrete time theory. For discrete observations it is well-established in the field of weak dependence that different sets of mixing assumptions (e.g., $\alpha$-mixing or $\beta$-mixing) and relaxations thereof can produce variance bounds and deviation inequalities that hold up to analogous results from i.i.d.\ observations to yield sharp nonparametric estimation results, see \cite{dedecker2007, rio17} for an overview. Statistically, it is therefore fundamentally interesting whether analysing a continuous time mixing Markov process based on full observations in our framework yields better estimation rates compared to partial observations corresponding to a weakly dependent observation sequence.

Indeed, in presence of the additional analytic tool provided by the heat-kernel bound, we establish in Section \ref{sec: density estimation} that the stationary density of exponentially $\beta$-mixing Markov processes can be estimated in any dimension at optimal rates both wrt.\ $\sup$-norm risk and pointwise $L^2$ risk---where optimality is understood relative to the benchmark minimax rates known for continuous reversible diffusion processes that are faster than the nonparametric rate for well-behaved discretely sampled data. We go even further by showing that in dimension $d \geq 3$---where the optimal bandwidth choice depends on the typically unknown degree of H\"older smoothness $\beta$--- a Lepski type adaptive bandwidth selection scheme proposed in \cite{gini09} for i.i.d.\ data fitted to our needs provides optimal estimation rates up to iterated $\log$-factors (see also \citep{lepski13} for an adaptive scheme for anisotropic $\sup$-norm estimation for i.i.d.\ observations). More precisely, our main result Theorem \ref{theo:invdens} shows that given a kernel estimator $\hat{\rho}_{h,T}$ for the unknown invariant density $\rho$ with bandwidth choice
\[h \equiv h(T) \sim \begin{cases}
    \log^2 T \slash \sqrt{T}, &d = 1,\\
    \log T\slash T^{1\slash 4} , &d = 2,\\
    (\log T \slash T)^{1 \slash (2 \beta + d -2)}, &d \geq 3,\end{cases}
\] 
we have for any $p \geq 1$ and a bounded open domain $D$,
\[\E\Big[\sup_{x \in D} \big \lvert \hat{\rho}_{h,T}(x) - \rho(x) \big\rvert^{p} \Big]^{1/p} \in \begin{cases}
    \mathsf{O}\big(\sqrt{\log T/T}\big), &d=1,\\
    \mathsf{O}\big(\log T/\sqrt T\big), &d=2,\\
    \mathsf{O}\big(\left(\log T/T\right)^{\frac{\beta}{2\beta+d-2}}\big), &d\ge3.
\end{cases}
\]
If for $d \geq 3$ we replace the smoothness-dependent bandwidth choice $h(T)$ by the adaptive selector $\hat{h}_T \equiv \hat{h}{}^{(k)}_T$ introduced in \eqref{est:band0} and the order of the kernel is sufficiently large, then for $\log_{(k)} T$ denoting the $k$-th iterated logarithm,
\[\E\Big[\sup_{x \in D} \big \lvert \hat{\rho}_{\hat{h}_T,T}(x) - \rho(x) \big\rvert \Big] \in \mathsf{O}\bigg(\bigg(\frac{\log_{(k)} T \log T}{T}\bigg)^{\frac{\beta}{2\beta + d -2}} \bigg),\]
where $k \in \N$ can, in principle, be chosen arbitrarily large---which however decreases the size of the set of candidate bandwidths for the adaptive selection procedure given a finite oberservation horizon.
We emphasize that the  logarithmic gap could be avoided if constants appearing in the uniform deviation inequality from Section \ref{sec: deviation} were explicitely calculated. This, however, requires exact knowledge of the ergodic and short time behavior of the process, contradicting a truly adaptive nature of the approach. 

Such $\sup$-norm adaptive multivariate estimation results are completely new and complement adaptive $L^2$ estimation procedures based on model selection considered in \cite{comte02} for discrete time mixing chains and in \cite{amorino2020invariant} for Lévy driven jump-diffusions. We emphasize that \cite{comte02} also consider estimation of continuous time mixing processes in terms of their sampled skeletons. However, our improved adaptive estimation rates in presence of heat kernel bounds demonstrate that such approach can be considerably improved by not taking a Markov chain viewpoint under partial observations but by exploiting continuous time probabilistic structures under full observations.

As a concrete example, we investigate multidimensional SDEs with L\'evy-driven jump part, i.e., Markov processes associated to the solution of 
\begin{equation} \label{eq: levy diff}
\diff{X_t} = b(X_t)\diff{t} + \sigma(X_t) \diff{W_t} + \gamma(X_{t-}) \diff{Z}_t, \quad X_0 = x \in \R^d,
\end{equation}
where $\mathbf{W}$ is $d$-dimensional Brownian motion and $\mathbf{Z}$ is a pure jump L\'evy process independent of $\mathbf{W}$. 
In Section \ref{subsec:ou}, we investigate Lévy driven Ornstein--Uhlenbeck processes as the basic class of Lévy driven jump diffusions with unbounded drift coefficient. In presence of non-trivial Gaussian part and very mild moment assumptions on the Lévy measure, we infer optimal $\sup$-norm and pointwise $L^2$ invariant density estimation results in any dimension. In this case, an adaptive estimation procedure is not necessary, since the invariant density is a smooth function.
In Section \ref{sec: jump diff} we allow for more flexible dispersion and jump coefficients $\sigma,\gamma$ with the price to be paid being boundedness of the drift $b$. By considering solutions $\X$ to \eqref{eq: levy diff} under appropriate assumptions on the coefficients $b,\sigma,\gamma$ and the jump measure associated to $\mathbf{Z}$ we can apply our general statistical results to invariant density estimation for $\X$, thus establishing new results on $\sup$-norm adaptive invariant density estimation for such general jump processes.

In the sequel, we concentrate on guiding the reader through our framework and the accompanied mathematical results. 
All proofs are deferred to the appendices after Section \ref{sec: density estimation}, with more specific references on their exact location at the relevant passages of the main text.

\paragraph{Basic notation.}
A set $B \in \mathcal{B}(\R^d)$ is called $\mu$-full if $\mu(B) = 1$. 
We say that the Borel right Markov process $\X$ is $\mu$-a.s.\ $V$-ergodic at speed $\Xi$ if, for some $\mu$-full set $\fset$, 
\begin{equation}\label{def:Verg}
\lVert P_t(x,\cdot) - \mu \rVert_{\mathrm{TV}} \leq CV(x) \Xi(t), \quad t \geq 0, x \in \fset,
\end{equation}
where $V \colon \R^d \to [0,\infty]$ with $V \one_\Lambda(x) < \infty$ and, for a signed measure $\nu$, $\lVert \nu \rVert_{\mathrm{TV}} \coloneqq \sup_{\lvert f \rvert \leq 1} \lvert \nu(f)\rvert$ denotes its total variation norm. 
If \eqref{def:Verg} holds with $\Xi(t) = (1+t)^{-\alpha}$ for some $\alpha > 0$, we say that $\X$ is $\mu$-a.s.\ $V$-polynomially ergodic of degree $\alpha$. If $\Xi(t)= \mathrm{e}^{-\kappa t}$ for some $\kappa > 0$, then $\X$ is called $\mu$-a.s.\ exponentially ergodic. When $\fset = \R^d$ and $V(x) < \infty$ for any $x \in \R^d$, we just say that $\X$ is ergodic at speed $\Xi$ (resp., polynomially ergodic and exponentially ergodic).

For any multi-index $\alpha\in\N^d$ and $x\in\R^d$, set $|\alpha|=\sum_{i=1}^d\alpha_i$ and $x^\alpha=\prod_{i=1}^dx_i^{\alpha_i}$.
For $\llfloor\beta\rrfloor$ denoting the largest integer \textit{strictly} smaller than $\beta$, introduce the H\"older class on an open domain $D \subset \R^d$
\begin{equation}\label{def:hold}
\mathcal H_D(\beta,\mathsf{L})=\Big\{f\in \mathcal{C}^{\llfloor \beta\rrfloor}(D,\R) : \max_{\lvert \alpha \rvert = \llfloor \beta \rrfloor} \sup_{x,y \in D, x \neq y} \frac{\lvert f^{(\alpha)}(x) - f^{(\alpha)}(y)\rvert}{\lvert x -y \rvert^{\alpha - \llfloor \alpha \rrfloor}} \leq \mathsf{L},\sup_{x \in D} \lvert f(x) \rvert \leq \mathsf{L}\Big\},
\end{equation}
where $f^{(\alpha)} \coloneqq\frac{\partial^{|\alpha|}f}{\partial x_1^{\alpha_1}\ldots\partial x_d^{\alpha_d}}$.
Recall that a kernel function $K\colon\R^d\to\R$ is said to be of order $\ell \in \N$ if, for any $\alpha \in \N^d$ with $\lvert \alpha \rvert \leq \ell$, $x \mapsto x^\alpha K(x)$ is integrable and, moreover,
$\int_{\R^d} K(x) \diff{x} = 1$, $\int_{\R^d} K(x)x^\alpha\d x=0$, for $\alpha \in \N^d, \lvert \alpha \rvert \in \{1,\ldots,\ell\}$.

\section{Basic framework and variance analysis of integral functionals of general Markov processes}\label{sec:basic}
This first section focuses on the analysis of the variance of integral functionals of the form $\int_0^t f(X_s)\diff s$ for the ergodic process $\X=(X_s)_{0\le s\le t}$ under different sets of general assumptions on $\X$ that will carry us through the rest of the paper. Such variance bounds are indispensable tools for statistical applications since (as we will see in Section \ref{sec: deviation}) the variance of integral functionals naturally appears in associated deviation inequalities and related moment bounds and thus requires tight estimates. All proofs for this section can be found in Appendix \ref{app: basic}.

\subsection{Variance analysis under assumptions on transition and invariant density}
Recall the definition of Assumption \ref{invariant probability measure assumption} from the introduction. 
We start by working under the following set of additional assumptions:
\begin{enumerate}[leftmargin=*,label=($\mathscr{A}$\arabic*),ref=($\mathscr{A}$\arabic*)]
\item 
In case $d = 1$, there exists a non-negative, measurable function $\alpha\colon (0,1] \to \R_+$ such that, for any $t \in (0,1]$,
\[
\sup_{x,y \in \R} p_t(x,y) \leq \alpha(t) \quad \text{and} \quad \int_{0+}^1 \alpha(t) \diff{t} = c_1 < \infty,
\]
and, in case $d \geq 2$, there exists $c_2 > 0$ such that the following on-diagonal heat kernel estimate holds true:
\begin{equation} \label{cond1:uni}
\forall t\in (0,1]:\, \sup_{x,y \in \R^d} p_t(x,y) \leq c_2 t^{-d \slash 2}.
\end{equation}
\label{ass: density bound}
\item \label{ass: conv bound} There exists a $\mu$-full set $\fset$ such that for any compact set $\mathcal{S} \subset \R^d$, there exists a non-negative, measurable function $r_{\mathcal{S}} \colon (0,\infty) \to \R_+$ such that
\begin{equation} \label{cond2:uni}
\begin{split}
\forall t > 1:\,  \sup_{x \in \mathcal{S} \cap \fset, y \in \mathcal{S}} \lvert p_t(x,y) - \rho(y) \rvert \leq r_{\mathcal{S}}(t) \hspace{4pt}  \text{with} \hspace{4pt} \int_1^\infty r_{\mathcal{S}}(t) \diff{t} = c_{\mathcal{S}} < \infty.
\end{split}
\end{equation}
\end{enumerate}
An essential aspect of the statistical analysis of stochastic processes is the influence of the dimension of the underlying process.
It is known that certain phenomena (as compared, e.g., to estimation based on i.i.d.\ observations) occur in the one-dimensional case.
However, these phenomena can usually only be detected by means of specific techniques that take advantage of the unique probabilistic characteristics of scalar processes such as local time for one-dimensional diffusion processes.
A ``standardized'' statistical framework which covers all dimensions with similar conditions cannot capture these phenomena.
Our assumptions may therefore be understood as an attempt to find general conditions that make no reference to dimension or process specific phenomena, yet yield variance bounds which are tight enough to allow proving optimal convergence rates for nonparametric procedures.

In this regard, they should be compared to the Castellana--Leadbetter condition \citep{casta1986} requiring that
\begin{equation}\label{eq: casta}
\int_{(0,\infty)} \sup_{x,y \in \R^d} \lvert \rho(x) p_t(x,y) - \rho(x)\rho(y)\rvert \diff{t} < \infty,
\end{equation}
and which allows $L^2$ estimation of the invariant density via a kernel estimator at parametric (or \textit{superoptimal} \citep{bosqbook}) rate $1/T$ in \textit{any} dimension $d \geq 1$. Since \ref{ass: density bound} implies that $\rho$ is bounded, \ref{ass: conv bound} can be understood as a  localized, unweighted alternative to \eqref{eq: casta} away from $0$, which captures the mixing behaviour of the process as we discuss below. Our assumption \ref{ass: density bound} corresponds to the integral part of \eqref{eq: casta} close to $0$ and guarantees that the distribution of $X_t$ is not too close to a singular distribution. However, in dimension $d \geq 2$ this assumption is much milder than \eqref{eq: casta} since heat kernel bounds on the transition density are quite common for many multdimensional Markov processes such as strong solutions of (jump) SDEs. On the other hand, \eqref{eq: casta} is too strong for such Markov processes, since, e.g., the minimax optimal $L^2$ rate for multivariate diffusions processes is known to be worse than $1/T$ and hence the variance bound implied by \eqref{eq: casta} cannot be achieved.

Also note that the transition density bounds formulated in \ref{ass: density bound} are  weak compared to related literature dealing with statistical estimation of jump processes. 
E.g., \citep{amorino2020invariant} construct their assumptions on the coefficients and the jump measure of a $d$-dimensional L\'evy-driven jump diffusion to guarantee a heat kernel-type estimate of the form
\[
p_t(x,y) \lesssim t^{-d\slash 2} \mathrm{e}^{-\lambda \frac{\lVert y - x \rVert^2}{t}} + \frac{t}{\vert \sqrt{t} + \lVert y -x \rVert\vert^{d+ \alpha}}, \quad x,y \in \R^d, t\in (0,T],
\]
for the estimation horizon $T> 0$, where $\alpha \in (0,2)$ is the self-similarity index of a strictly $\alpha$-stable L\'evy process whose L\'evy measure is assumed to dominate the L\'evy measure governing the jumps of the SDE. 
Clearly, this condition is stronger than what we require and is fitted to the concrete probabilistic setting.
The reason for this specific choice becomes apparent from Corollary \ref{coroll: heat jump} in Section \ref{sec: jump diff}, but our approach reveals that \ref{ass: density bound} is sufficient to obtain tight variance bounds in a general multivariate setting.
Let us now give the variance bounds implied in our framework.

\begin{proposition}\label{prop:varextra}
Suppose that \ref{ass: density bound} and \ref{ass: conv bound} are satisfied, and let $f$ be a bounded function with compact support $\mathcal S$ fulfilling $\lebesgue(\mathcal S)<1$.
Then, there exists a constant $C>0$, such that, for any $T>0$,
\begin{equation}\label{varpsi}
\Var\left(\int_0^Tf(X_t)\diff t\right) \leq C(1 \vee c_{\mathcal{S}})T\|f\|_\infty^2\lebesgue(\mathcal S)\mu(\mathcal S)\psi_d^2(\lebesgue(\mathcal S)), \hspace{3pt}\text{with }\psi_d(x)\coloneqq\begin{cases}
1, &d=1,\\
\sqrt{1+\log(1/x)}, & d=2,\\
x^{\frac1d-\frac12}, &d\ge 3,\end{cases}
\end{equation}
where the variance is taken with respect to $\PP$.
\end{proposition}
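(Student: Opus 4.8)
The plan is to reduce the variance to a time integral of the stationary autocovariance and split it at $u=1$, the threshold separating \ref{ass: density bound} from \ref{ass: conv bound}. By stationarity of $\X$ under $\PP$ and Fubini's theorem (licit since $f$ is bounded, so that $\E\int_0^T\int_0^T\lvert f(X_s)f(X_t)\rvert\diff s\diff t<\infty$),
\[
\Var\Big(\int_0^T f(X_t)\diff t\Big)=2\int_0^T(T-u)\,\Cov\big(f(X_0),f(X_u)\big)\diff u\le 2T\int_0^\infty\big\lvert\Cov\big(f(X_0),f(X_u)\big)\big\rvert\diff u,
\]
so it suffices to bound the last integral by a constant times $(1\vee c_{\mathcal S})\lVert f\rVert_\infty^2\lebesgue(\mathcal S)\mu(\mathcal S)\psi_d^2(\lebesgue(\mathcal S))$. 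Writing out the covariance via the Markov property, the transition densities $p_u$ and the invariant density $\rho$, and using $\supp f=\mathcal S$,
\[
\Cov\big(f(X_0),f(X_u)\big)=\int_{\mathcal S}\int_{\mathcal S}f(x)f(y)\rho(x)\big(p_u(x,y)-\rho(y)\big)\diff y\diff x,
\]
hence $\lvert\Cov(f(X_0),f(X_u))\rvert\le\lVert f\rVert_\infty^2\int_{\mathcal S}\int_{\mathcal S}\rho(x)\lvert p_u(x,y)-\rho(y)\rvert\diff y\diff x$. It is also convenient to note that \ref{ass: density bound} forces $\rho$ to be bounded (as $\rho(y)=\int p_t(x,y)\rho(x)\diff x\le\sup_x p_t(x,y)$), so that $\mu(\mathcal S)^2\le\lVert\rho\rVert_\infty\lebesgue(\mathcal S)\mu(\mathcal S)$.

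For the long-time part $u>1$ I would invoke \ref{ass: conv bound}: since $\fset$ is $\mu$-full, the $\rho$-weighted $x$-integral over $\mathcal S$ may be restricted to $\mathcal S\cap\fset$, where $\lvert p_u(x,y)-\rho(y)\rvert\le r_{\mathcal S}(u)$; integrating in $x,y$ and then in $u\in(1,\infty)$ gives $\int_1^\infty\lvert\Cov(f(X_0),f(X_u))\rvert\diff u\le\lVert f\rVert_\infty^2\lebesgue(\mathcal S)\mu(\mathcal S)\,c_{\mathcal S}$. As $\psi_d(\lebesgue(\mathcal S))\ge 1$ for $\lebesgue(\mathcal S)<1$ in every dimension, this term already has the desired shape.

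For the short-time part $u\le 1$ the key is to balance two a priori bounds on the ``$p_u$-part'': the conservation bound $\int_{\mathcal S}\int_{\mathcal S}\rho(x)p_u(x,y)\diff y\diff x\le\int_{\mathcal S}\rho(x)\diff x=\mu(\mathcal S)$ and the on-diagonal heat-kernel bound $\int_{\mathcal S}\int_{\mathcal S}\rho(x)p_u(x,y)\diff y\diff x\le c_2u^{-d/2}\lebesgue(\mathcal S)\mu(\mathcal S)$ from \ref{ass: density bound} (with $\alpha(u)$ in place of $c_2u^{-d/2}$ when $d=1$). Combined with the $\rho\otimes\rho$-bound above this yields $\lvert\Cov(f(X_0),f(X_u))\rvert\le\lVert f\rVert_\infty^2\mu(\mathcal S)\big(\min(c_2u^{-d/2}\lebesgue(\mathcal S),1)+\lVert\rho\rVert_\infty\lebesgue(\mathcal S)\big)$, so the whole matter reduces to the elementary inequality
\[
\int_0^1\min\!\big(c_2u^{-d/2}\lebesgue(\mathcal S),1\big)\diff u\le C_d\,\lebesgue(\mathcal S)\,\psi_d^2(\lebesgue(\mathcal S)),
\]
proved by splitting the $u$-integral at the crossover $u_\ast=(c_2\lebesgue(\mathcal S))^{2/d}$: on $(0,u_\ast]$ the integrand is $1$ and contributes $u_\ast$ of order $\lebesgue(\mathcal S)^{2/d}$, while on $(u_\ast,1]$ one integrates $c_2u^{-d/2}\lebesgue(\mathcal S)$, which produces a $\log(1/\lebesgue(\mathcal S))$-factor when $d=2$ and again a term of order $\lebesgue(\mathcal S)^{2/d}$ when $d\ge 3$ (using $\int_{u_\ast}^1u^{-d/2}\diff u\le\tfrac{2}{d-2}u_\ast^{1-d/2}$ and $c_2\lebesgue(\mathcal S)\,u_\ast^{1-d/2}=u_\ast$); for $d=1$ one simply bounds $\min(\cdot,1)\le\alpha(u)\lebesgue(\mathcal S)$ and uses $\int_0^1\alpha(u)\diff u=c_1$. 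This matches $\lebesgue(\mathcal S)\psi_d^2(\lebesgue(\mathcal S))$ exactly, since $\psi_d^2(x)=x^{2/d-1}$ for $d\ge 3$, $\psi_d^2(x)=1+\log(1/x)$ for $d=2$ and $\psi_d^2(x)=1$ for $d=1$; the degenerate case $u_\ast\ge 1$ (which can occur only if $\lebesgue(\mathcal S)\ge 1/c_2$) is absorbed into $C_d$. Putting both regimes together yields \eqref{varpsi} with $C$ depending only on $d$, $c_1$ (resp.\ $c_2$) and $\lVert\rho\rVert_\infty$.

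I expect the short-time estimate to be the only genuinely delicate point: one has to interpolate between the heat-kernel blow-up $u^{-d/2}$ and the trivial total-mass bound at precisely the right scale to extract the dimension-dependent factor $\psi_d$, and this is exactly where the dimensional trichotomy in \eqref{varpsi} originates. The remaining pieces — the stationary variance identity, Fubini, the explicit covariance formula and the $\mu$-fullness of $\fset$ — are routine bookkeeping.
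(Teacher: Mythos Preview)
Your proposal is correct and follows essentially the same route as the paper: rewrite the variance as a time-integral of the stationary covariance, split at $u=1$, use \ref{ass: conv bound} for $u>1$ and interpolate between the conservation bound and the heat-kernel bound for $u\le 1$ at the crossover $u_\ast\sim\lebesgue(\mathcal S)^{2/d}$. The only cosmetic differences are that the paper works with the signed integrand $f(x)f(y)(p_v-\rho)$ directly rather than passing to $|\Cov|$, and phrases the short-time split as $\mathcal I(0,\delta)+\mathcal I(\delta,1)$ with $\delta=\lebesgue(\mathcal S)^{2/d}$ instead of your $\min$ formulation; your explicit handling of the $\rho\otimes\rho$ term via boundedness of $\rho$ is in fact slightly more careful than the paper's write-up.
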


To get an impression of the usefulness of the above result, let us discuss the relation of the local uniform transition density convergence assumption \ref{ass: conv bound} to more general and often conveniently verifiable stability conditions on $\X$.
In \cite{vere99}, conditions on the characteristic function $\varphi_{X_t}^x(\lambda) \coloneqq \E^x[\exp(\mathrm{i}\langle X_t, \lambda)]$ of $X_t$ and the Fourier transform $\{\mathscr{F}\mu\}(\lambda) = \int_{\R^d} \mathrm{e}^{\mathrm{i}\langle x,\lambda \rangle} \, \mu(\diff{x})$ were formulated in the scalar setting $d=1$ that imply finiteness of the integral part away from $0$ in the Castellana--Leadbetter condition \eqref{eq: casta}. A straightforward adaption to our multivariate localized setting yields the following result, with the proof being omitted. 

\begin{lemma} \label{lem: vere}
Suppose that $\X$ is $V$-polynomially ergodic of degree $\gamma_1 > q/(q-1)$ for some locally bounded function $V$ and $q > 1$. If there exists $\gamma_2 > qd$ and a locally bounded function $V$ such that
\begin{enumerate}[label = ($\mathscr{V}$\arabic*), ref = ($\mathscr{V}$\arabic*)]
\item \label{ass: char f0} $\lvert \varphi_{X_t}^x(\lambda) - \{\mathscr{F} \mu\}(\lambda)\rvert \leq V(x) (1+ t)^{-\gamma_1}, \quad t \geq 1, x, \lambda \in \R^d$
\item \label{ass: char f}  
$\lvert \varphi^x_{X_t}(\lambda) \rvert \vee \lvert \{\mathscr{F} \mu \}(\lambda) \rvert \lesssim (1+ \lVert \lambda \rVert)^{-\gamma_2}, \quad x,\lambda \in \R^d, t \geq 1,$
\end{enumerate}
then \ref{ass: conv bound} is satisfied with $\fset = \R^d$, $r_{\mathcal{S}}(t) \sim \sup_{x \in \mathcal{S}} V(x) (1+t)^{-\gamma_1}$ for compacts $\mathcal{S}$. 
\end{lemma}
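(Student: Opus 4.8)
Write the omitted proof as follows.

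\medskip
\noindent\textbf{Approach.} The plan is to reduce the local uniform bound required in \eqref{cond2:uni} to an estimate on an $L^1$-norm of the difference of characteristic functions, and then to interpolate between \ref{ass: char f0} (which controls decay in $t$) and \ref{ass: char f} (which controls integrability in the frequency variable $\lambda$). Since $\varphi^x_{X_t}$ is the characteristic function of the probability density $p_t(x,\cdot)$ --- a genuine probability density by \ref{invariant probability measure assumption} together with non-explosiveness of $\X$ --- and $\{\mathscr{F}\mu\}$ is the characteristic function of $\mu = \rho\,\diff\lebesgue$, and since \ref{ass: char f} forces both to be integrable over $\R^d$ because $\gamma_2 > qd > d$, Fourier inversion yields bounded continuous versions of $p_t(x,\cdot)$ and of $\rho$, with which we work when verifying \ref{ass: conv bound}. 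Subtracting the two inversion formulas and taking absolute values gives, uniformly in $x,y\in\R^d$,
\[
\lvert p_t(x,y)-\rho(y)\rvert \;\le\; (2\pi)^{-d}\int_{\R^d}\bigl\lvert \varphi^x_{X_t}(\lambda)-\{\mathscr{F}\mu\}(\lambda)\bigr\rvert\,\diff\lambda \;=:\; (2\pi)^{-d}\,I_t(x).
\]

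\medskip
\noindent\textbf{Bounding $I_t(x)$.} Write $q'\coloneqq q/(q-1)$ for the conjugate exponent, so that $\gamma_1>q'$ by hypothesis. Splitting the integrand as $\lvert\,\cdot\,\rvert^{1/q'}\lvert\,\cdot\,\rvert^{1/q}$ (note $1/q'+1/q=1$) and pulling the first factor out as a supremum,
\[
I_t(x)\;\le\;\Bigl(\sup_{\lambda\in\R^d}\bigl\lvert\varphi^x_{X_t}(\lambda)-\{\mathscr{F}\mu\}(\lambda)\bigr\rvert\Bigr)^{1/q'}\int_{\R^d}\bigl\lvert\varphi^x_{X_t}(\lambda)-\{\mathscr{F}\mu\}(\lambda)\bigr\rvert^{1/q}\,\diff\lambda.
\]
For $t\ge 1$, \ref{ass: char f0} bounds the supremum by $V(x)(1+t)^{-\gamma_1}$, while \ref{ass: char f} and the triangle inequality bound the integrand of the remaining integral by a constant multiple of $(1+\lVert\lambda\rVert)^{-\gamma_2/q}$, which is Lebesgue-integrable on $\R^d$ exactly because $\gamma_2/q>d$. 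Combining and using local boundedness of $V$, for $t>1$,
\[
\sup_{x\in\mathcal{S}\cap\fset,\,y\in\mathcal{S}}\lvert p_t(x,y)-\rho(y)\rvert\;\le\;C\,\Bigl(\sup_{x\in\mathcal{S}}V(x)\Bigr)^{1/q'}(1+t)^{-\gamma_1/q'}\;=:\;r_{\mathcal{S}}(t),
\]
with $\fset=\R^d$, and $\int_1^\infty r_{\mathcal{S}}(t)\,\diff t<\infty$ since $\gamma_1/q'=\gamma_1(q-1)/q>1$ by the standing hypothesis $\gamma_1>q/(q-1)$. This is a rate of the asserted type (a locally bounded prefactor in $x$ times an integrable polynomial in $t$); a slightly sharper split of the $\lambda$-integral at a $t$-dependent radius would improve the exponent from $\gamma_1(q-1)/q$ to $\gamma_1(1-d/\gamma_2)$, but this refinement is immaterial for the later applications.

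\medskip
\noindent\textbf{Main obstacle.} The only step requiring genuine care is justifying the pointwise Fourier-inversion identity: once the characteristic functions are known to be in $L^1(\R^d)$, one must argue that $p_t(x,\cdot)$ and $\rho$ admit continuous versions for which the inversion formula, and hence the displayed supremum bound, hold literally rather than merely Lebesgue-almost everywhere. Since \ref{ass: conv bound} is formulated for \emph{some} measurable representative, passing to these versions entails no loss, but it should be stated explicitly. Everything else is routine; in particular, the exponents appearing in \ref{ass: char f0}--\ref{ass: char f} are calibrated precisely so that the Hölder split $1=1/q'+1/q$ simultaneously delivers a $t$-decay that is integrable on $[1,\infty)$ and a $\lambda$-tail that is integrable on $\R^d$.
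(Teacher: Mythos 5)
Your proof is correct and follows exactly the route the paper intends (the proof is omitted there, with a pointer to the adaptation of Veretennikov's argument): Fourier inversion of the integrable characteristic functions, giving continuous bounded versions of $p_t(x,\cdot)$ and $\rho$, followed by an interpolation between the $t$-decay from \ref{ass: char f0} and the $\lambda$-decay from \ref{ass: char f}. The only point worth flagging is that neither your H\"older split (exponent $\gamma_1(q-1)/q$) nor the sharper $t$-dependent truncation (exponent $\gamma_1(1-d/\gamma_2)$) reproduces the rate $(1+t)^{-\gamma_1}$ as literally written in the lemma; what the two hypotheses actually support is the interpolated rate, and the calibration $\gamma_1>q/(q-1)$, $\gamma_2>qd$ is precisely what makes it integrable, so \ref{ass: conv bound} with $\fset=\R^d$ holds as required --- you note this explicitly, and the same caveat would attach to any proof from these assumptions, so it reflects an imprecision in the statement's ``$\sim$'' rather than a gap in your argument.
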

Note that \ref{ass: char f} implies that the Fourier transforms of $P_t(x,\cdot)$ and $\mu$ are integrable and hence the Fourier inversion theorem guarantees that  continuous bounded transition and invariant densities exist. Moreover, as remarked in \cite{vere99}, \ref{ass: char f0} is fulfilled whenever $\X$ is $V$-polynomially ergodic with rate $\gamma_1 > 1$. 

Condition \ref{ass: char f} is quite natural in a statistical estimation context since it essentially encodes a certain amount of smoothness of the transition and stationary density. However, the following simple observation demonstrates that the additional growth conditions on the characteristic function are not needed in presence of sufficiently fast total variation convergence.
Concerning the specific set of assumptions \ref{invariant probability measure assumption}--\ref{ass: conv bound}, it is established with this result in Section \ref{subsec:ou} that they are satisfied, e.g., for a large class of multivariate L\'evy-driven Ornstein--Uhlenbeck processess. 

\begin{lemma} \label{prop: poisson}
Suppose that $\lVert p_1 \rVert_\infty < \infty$ and that $\X$ is $\mu$-a.s.\ $V$-ergodic at speed $\Xi$ such that $V\one_\fset$ is locally bounded  and $\int_0^\infty \Xi(t) < \infty$. Then, \ref{ass: conv bound} holds with 
\[r_{\mathcal{S}}(t) = 2C\lVert p_1 \rVert_\infty \sup_{x \in \mathcal{S} \cap \fset} V(x) \Xi(t-1), \quad t > 1.\]
\end{lemma}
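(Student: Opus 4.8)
The goal is to bound $\sup_{x \in \mathcal S \cap \fset,\, y \in \mathcal S}\lvert p_t(x,y) - \rho(y)\rvert$ for $t > 1$. The natural idea is to decompose the transition from time $0$ to time $t$ by conditioning at time $t-1$: by the Chapman--Kolmogorov equation,
\[
p_t(x,y) = \int_{\R^d} p_{t-1}(x,z)\, p_1(z,y)\, \diff z = \int_{\R^d} p_1(z,y)\, P_{t-1}(x,\diff z),
\]
while the stationarity of $\mu$ gives the analogous representation $\rho(y) = \int_{\R^d} p_1(z,y)\, \mu(\diff z)$. Subtracting,
\[
p_t(x,y) - \rho(y) = \int_{\R^d} p_1(z,y)\, \bigl(P_{t-1}(x,\diff z) - \mu(\diff z)\bigr).
\]
Since $z \mapsto p_1(z,y)$ is bounded by $\lVert p_1 \rVert_\infty < \infty$ uniformly in $y$, the integral against the signed measure $P_{t-1}(x,\cdot) - \mu$ is controlled by the total variation norm: using the definition $\lVert \nu \rVert_{\mathrm{TV}} = \sup_{\lvert f\rvert \le 1}\lvert \nu(f)\rvert$,
\[
\lvert p_t(x,y) - \rho(y)\rvert \le \lVert p_1 \rVert_\infty\, \lVert P_{t-1}(x,\cdot) - \mu \rVert_{\mathrm{TV}}.
\]

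Now I invoke the $\mu$-a.s.\ $V$-ergodicity at speed $\Xi$: for $x \in \fset$, the bound \eqref{def:Verg} gives $\lVert P_{t-1}(x,\cdot) - \mu \rVert_{\mathrm{TV}} \le C V(x) \Xi(t-1)$ (valid since $t-1 \ge 0$). Taking the supremum over $x \in \mathcal S \cap \fset$ and $y \in \mathcal S$, and using local boundedness of $V\one_\fset$ so that $\sup_{x \in \mathcal S \cap \fset} V(x) < \infty$ on the compact $\mathcal S$, yields
\[
\sup_{x \in \mathcal S \cap \fset,\, y \in \mathcal S}\lvert p_t(x,y) - \rho(y)\rvert \le C \lVert p_1 \rVert_\infty \Bigl(\sup_{x \in \mathcal S \cap \fset} V(x)\Bigr) \Xi(t-1) =: \tfrac12\, r_{\mathcal S}(t),
\]
matching the claimed $r_{\mathcal S}(t) = 2 C \lVert p_1 \rVert_\infty \sup_{x \in \mathcal S \cap \fset} V(x)\,\Xi(t-1)$ (the factor $2$ gives a harmless safety margin, also accommodating the usual convention $\lVert\nu\rVert_{\mathrm{TV}} = 2\sup_B \lvert\nu(B)\rvert$ depending on normalisation). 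Finally, the integrability requirement $\int_1^\infty r_{\mathcal S}(t)\diff t < \infty$ follows from the hypothesis $\int_0^\infty \Xi(t)\diff t < \infty$ via the substitution $t \mapsto t-1$, since $\int_1^\infty \Xi(t-1)\diff t = \int_0^\infty \Xi(s)\diff s < \infty$.

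I expect no serious obstacle here; the only points requiring mild care are (i) justifying the Chapman--Kolmogorov representation at the level of densities, which is standard given Assumption \ref{invariant probability measure assumption}, and (ii) keeping track of whether the $V$-ergodicity bound is stated with the TV norm or with the total mass, which only affects the universal constant absorbed into the definition of $r_{\mathcal S}$.
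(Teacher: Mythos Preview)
Your proposal is correct and follows essentially the same route as the paper: both use the Chapman--Kolmogorov/semigroup identity $p_t(x,y)=\int p_1(z,y)\,P_{t-1}(x,\diff z)$ together with invariance $\rho(y)=\int p_1(z,y)\,\mu(\diff z)$, then bound the difference by $\lVert p_1\rVert_\infty$ times the total variation distance and apply the $V$-ergodicity estimate. The paper phrases the passage from the $L^1$-difference of densities to $\lVert P_{t-1}(x,\cdot)-\mu\rVert_{\mathrm{TV}}$ via Scheff\'e's theorem (this is where the factor $2$ enters), whereas you go directly through $\sup_{\lvert f\rvert\le 1}\lvert\nu(f)\rvert$; these are equivalent and your remark about the normalisation already covers it.
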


Recall that the stationary Markov process $\X$ is said to be $\beta$-mixing if
\[\beta(t) \coloneqq \int_{\R^d} \Vert P_t(x,\cdot)-\mu(\cdot)\Vert_{\TV}\, \mu(\diff x) \underset{t \to \infty}{\longrightarrow} 0.\]
If there exist constants $\kappa, c_\kappa > 0$ such that $\beta(t) \leq c_\kappa \mathrm{e}^{-\kappa t}$ for any $t > 0$, then $\X$ is said to be exponentially $\beta$-mixing, which is always the case for $\mu$-a.s.\ $V$-exponentially ergodic Markov processes provided $\mu(V) < \infty$. Here, $\mu(V) < \infty$ is not a restriction since $V$ and $\kappa > 0$ can always be chosen such that $\mu(V) < \infty$, which follows from a straightforward extension of \cite[Theorem 6.14.(iii)]{nummelin1984} to the continuous time case. By the same theorem, the converse is also true, i.e., if $\X$ is exponentially $\beta$-mixing, then $\X$ is $\mu$-a.s.\ $V$-exponentially ergodic. See also \cite[Lemma 8.9]{chen05} for these statements. Exponential $\beta$-mixing is formulated as assumption \ref{ass: beta mix} in the next section and will be one of the pillars of our statistical analysis for the $\sup$-norm risk. It is therefore critical for us to understand the exact relationship between exponential $\beta$-mixing and \ref{ass: conv bound}. To this end, as a partial converse to Lemma \ref{prop: poisson}, we explore in Appendix \ref{sec:betamix} under which additional (quite natural) conditions, \ref{ass: conv bound} implies the exponential $\beta$-mixing property of $\X$.
Our main findings, taking account of Lemma \ref{prop: poisson}, Appendix \ref{sec:betamix} and the developments in Section \ref{subsec: beta}, are summarized in Figure \ref{fig: variance bounds}.

\tikzset{every picture/.style={line width=0.75pt}} 

\begin{figure}[ht]
\centering


\begin{tikzpicture}[x=0.75pt,y=0.75pt,yscale=-1,xscale=1]

\draw   (160.2,138) .. controls (160.2,133.58) and (163.78,130) .. (168.2,130) -- (230.7,130) .. controls (235.12,130) and (238.7,133.58) .. (238.7,138) -- (238.7,162) .. controls (238.7,166.42) and (235.12,170) .. (230.7,170) -- (168.2,170) .. controls (163.78,170) and (160.2,166.42) .. (160.2,162) -- cycle ;
\draw   (282.2,194) .. controls (282.2,189.58) and (285.78,186) .. (290.2,186) -- (352.7,186) .. controls (357.12,186) and (360.7,189.58) .. (360.7,194) -- (360.7,218) .. controls (360.7,222.42) and (357.12,226) .. (352.7,226) -- (290.2,226) .. controls (285.78,226) and (282.2,222.42) .. (282.2,218) -- cycle ;
\draw   (160.2,249) .. controls (160.2,244.58) and (163.78,241) .. (168.2,241) -- (230.7,241) .. controls (235.12,241) and (238.7,244.58) .. (238.7,249) -- (238.7,273) .. controls (238.7,277.42) and (235.12,281) .. (230.7,281) -- (168.2,281) .. controls (163.78,281) and (160.2,277.42) .. (160.2,273) -- cycle ;
\draw   (399,138) .. controls (399,133.58) and (402.58,130) .. (407,130) -- (469.5,130) .. controls (473.92,130) and (477.5,133.58) .. (477.5,138) -- (477.5,162) .. controls (477.5,166.42) and (473.92,170) .. (469.5,170) -- (407,170) .. controls (402.58,170) and (399,166.42) .. (399,162) -- cycle ;
\draw   (401.4,246.8) .. controls (401.4,242.38) and (404.98,238.8) .. (409.4,238.8) -- (471.9,238.8) .. controls (476.32,238.8) and (479.9,242.38) .. (479.9,246.8) -- (479.9,270.8) .. controls (479.9,275.22) and (476.32,278.8) .. (471.9,278.8) -- (409.4,278.8) .. controls (404.98,278.8) and (401.4,275.22) .. (401.4,270.8) -- cycle ;
\draw    (320.73,228.78) .. controls (317.36,233.92) and (314.24,238.18) .. (310.78,241.74) .. controls (299.28,253.61) and (284.15,257.86) .. (244.27,261.95) ;
\draw [shift={(241.8,262.2)}, rotate = 354.28999999999996] [fill={rgb, 255:red, 0; green, 0; blue, 0 }  ][line width=0.08]  [draw opacity=0] (8.04,-3.86) -- (0,0) -- (8.04,3.86) -- (5.34,0) -- cycle    ;
\draw [shift={(322.4,226.2)}, rotate = 122.5] [fill={rgb, 255:red, 0; green, 0; blue, 0 }  ][line width=0.08]  [draw opacity=0] (8.04,-3.86) -- (0,0) -- (8.04,3.86) -- (5.34,0) -- cycle    ;
\draw    (323.4,186.2) .. controls (308.43,170.44) and (298.11,146.92) .. (244.29,151.57) ;
\draw [shift={(241.8,151.8)}, rotate = 354.28999999999996] [fill={rgb, 255:red, 0; green, 0; blue, 0 }  ][line width=0.08]  [draw opacity=0] (8.04,-3.86) -- (0,0) -- (8.04,3.86) -- (5.34,0) -- cycle    ;
\draw    (200.2,129.4) .. controls (239.6,101.82) and (394.65,100.63) .. (437.52,127.36) ;
\draw [shift={(439.4,128.6)}, rotate = 214.99] [fill={rgb, 255:red, 0; green, 0; blue, 0 }  ][line width=0.08]  [draw opacity=0] (8.04,-3.86) -- (0,0) -- (8.04,3.86) -- (5.34,0) -- cycle    ;
\draw    (201,280.6) .. controls (237.06,310.39) and (394.52,307.91) .. (437.7,281.45) ;
\draw [shift={(440.2,279.8)}, rotate = 504.46] [fill={rgb, 255:red, 0; green, 0; blue, 0 }  ][line width=0.08]  [draw opacity=0] (8.04,-3.86) -- (0,0) -- (8.04,3.86) -- (5.34,0) -- cycle    ;
\draw    (478.4,151) .. controls (495.76,174.4) and (503.79,227.46) .. (481.94,259.02) ;
\draw [shift={(480.2,261.4)}, rotate = 307.57] [fill={rgb, 255:red, 0; green, 0; blue, 0 }  ][line width=0.08]  [draw opacity=0] (8.04,-3.86) -- (0,0) -- (8.04,3.86) -- (5.34,0) -- cycle    ;
\draw    (160.8,150.2) .. controls (139.15,176.72) and (138.61,229.86) .. (157.88,258.45) ;
\draw [shift={(159.4,260.6)}, rotate = 233.39] [fill={rgb, 255:red, 0; green, 0; blue, 0 }  ][line width=0.08]  [draw opacity=0] (8.04,-3.86) -- (0,0) -- (8.04,3.86) -- (5.34,0) -- cycle    ;

\draw (164.2,134.5) node [anchor=north west][inner sep=0.75pt]   [align=left] {\begin{minipage}[lt]{50.5pt}\setlength\topsep{0pt}
\begin{center}
{\scriptsize $\scriptsize{\X}$ fulf.\ \ref{ass: conv bound} w.}\\{\scalebox{.75}{$r_{{\mathcal{S}}}(t) = C_{\cS}\mathrm{e}^{-\kappa_{\cS}t}$}}
\end{center}

\end{minipage}};
\draw (284.2,192) node [anchor=north west][inner sep=0.75pt]   [align=left] {\begin{minipage}[lt]{53.59pt}\setlength\topsep{0pt}
\begin{center}
{\scriptsize $\scriptsize{\X}$ is $\mu$-a.s.\ }\\{\scriptsize $V$-exp.\ ergodic}
\end{center}

\end{minipage}};
\draw (167,245.2) node [anchor=north west][inner sep=0.75pt]   [align=left] {\begin{minipage}[lt]{47.37pt}\setlength\topsep{0pt}
\begin{center}
{\scriptsize $\scriptsize{\X}$ is exp. }\\{\scriptsize $\scriptsize{\beta}$-mixing}
\end{center}

\end{minipage}};
\draw (393,134.6) node [anchor=north west][inner sep=0.75pt]   [align=left] {\begin{minipage}[lt]{67.37pt}\setlength\topsep{0pt}
\begin{center}
{\scriptsize Variance bound}\\{\scriptsize \eqref{varpsi} holds}
\end{center}

\end{minipage}};
\draw (393,243.6) node [anchor=north west][inner sep=0.75pt]   [align=left] {\begin{minipage}[lt]{67.37pt}\setlength\topsep{0pt}
\begin{center}
{\scriptsize Variance bound}\\{\scriptsize \eqref{eq: var mix} holds}
\end{center}

\end{minipage}};

\draw (259.26,107.05) node [anchor=north west][inner sep=0.75pt]  [rotate=-27.05] [align=left] {\begin{minipage}{70.09pt}\setlength\topsep{0pt}
\begin{center}
{\scriptsize $\lVert p_1 \rVert_\infty < \infty,$}\\{\scriptsize $V$ loc.\ bound.}
\end{center}

\end{minipage}};
\draw (308.6,85.8) node [anchor=north west][inner sep=0.75pt]   [align=left] {\begin{minipage}[lt]{10.37pt}\setlength\topsep{0pt}
\begin{center}
{\scriptsize \ref{ass: density bound}}
\end{center}

\end{minipage}};
\draw (290.6,303) node [anchor=north west][inner sep=0.75pt]   [align=left] {\begin{minipage}[lt]{60.37pt}\setlength\topsep{0pt}
\begin{center}
{\scriptsize \ref{ass: density bound},$d \geq 2$}
\end{center}

\end{minipage}};
\draw (498.8,190.8) node [anchor=north west][inner sep=0.75pt]   [align=left] {\begin{minipage}[lt]{10.37pt}\setlength\topsep{0pt}
\begin{center}
{\scriptsize $\scriptsize{\lVert \rho \rVert_\infty < \infty}$}
\end{center}

\end{minipage}};
\draw (88.4,175.6) node [anchor=north west][inner sep=0.75pt]   [align=left] {\begin{minipage}[lt]{41.01pt}\setlength\topsep{0pt}
\begin{center}
{\scriptsize $\scriptsize{\X}$ is an ergodic }\\{\scriptsize $\scriptsize{T}$-process, $\fset = \R^d$}
\end{center}

\end{minipage}};


\end{tikzpicture}
\caption{Overview of interplay between variance bound results, assumptions and stability concepts} \label{fig: variance bounds}
\end{figure}
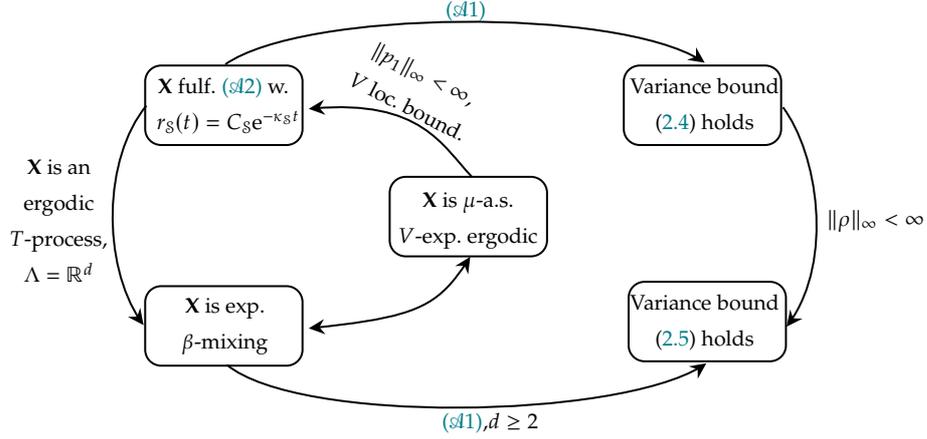
A clear picture is drawn, demonstrating that local uniform transition density convergence at exponential speed is intimately connected with exponential $\beta$-mixing of the process---both concepts having $\mu$-a.s.\ exponential ergodicity as the driving force behind them in most concrete applications. Both conditions \ref{ass: conv bound} and \ref{ass: beta mix} gain substantial additional statistical power via the smoothing assumption \ref{ass: density bound}, which allows obtaining tight variance bounds that yield superior estimation properties under continuous observations compared to incomplete information via sampling procedures, as will be demonstrated in Section \ref{sec: density estimation}. Moreover, the slightly more specific localized Castellana--Leadbetter condition provides the advantage of optimal estimation also in the scalar case $d=1$ and wrt the $L^2$ risk under less restrictive assumptions on the speed of convergence of the process (polynomial is sufficient) in any dimension, which justifies us studying this concept separately from exponential $\beta$-mixing.

\subsection{Variance analysis under exponential $\beta$-mixing} \label{subsec: beta}
In this subsection, we specify our study to multidimensional stochastic processes by restricting the analysis to dimension $d\ge2$. While we further assume that the on-diagonal heat kernel bound on the transition density \eqref{cond1:uni} from \ref{ass: density bound} still holds, we drop the transition density rate assumption \ref{ass: conv bound} and instead impose exponential $\beta$-mixing of $\X$.
Note that this is implied by \ref{ass: conv bound} under suitable technical conditions on $\X$ (see Figure \ref{fig: variance bounds} and Propositions \ref{prop:betamix} and \ref{prop2:betamix} in Appendix \ref{sec:betamix}).
\begin{enumerate}[leftmargin=*,label=($\mathscr{A}\beta$),ref=($\mathscr{A}\beta$)]
\item \label{ass: beta mix}
The process $\X$ started in the invariant measure $\mu$ is exponentially $\beta$-mixing, i.e., there exist constants $c_\kappa,\kappa>0$ such that
\[\int \Vert P_t(x,\cdot)-\mu \Vert_{\TV}\,\mu(\diff x)\ \le\ c_\kappa\e^{-\kappa t}, \quad t \geq 0.\]
\end{enumerate}
Let us emphasize that in presence of the heat kernel bound \ref{ass: density bound}, Lemma \ref{lemma: invariant bounded} below shows that Assumption \ref{invariant probability measure assumption} is strengthened to the existence of a \textit{bounded} invariant density since the transition density of any skeleton chain is uniformly bounded for fixed $t > 0$. That is, the following assumption is in place.
\begin{enumerate}[leftmargin=*,label=($\mathscr{A}0+$),ref=($\mathscr{A}0+$)]
\item Assumption \ref{invariant probability measure assumption} holds and the invariant density has a bounded version $\rho$, i.e., $\lVert\rho\rVert_\infty < \infty.$\label{ass: bounded invariant}
\end{enumerate}

\begin{lemma}\label{lemma: invariant bounded}
Assume that $\X$ has an invariant distribution $\mu$ and that there is some $\Delta>0$ such that the transition density $p_\Delta$ exists and $\sup_{x,y\in \R^d}p_\Delta(x,y)\leq c$ for some constant $c>0$. Then, $\mu$ admits a bounded density.
\end{lemma}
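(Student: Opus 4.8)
The plan is to read off a bounded Lebesgue density of $\mu$ directly from the invariance identity $\mu = \mu P_\Delta$. First I would use the definition of invariance to write, for an arbitrary Borel set $B \in \mathcal{B}(\R^d)$,
\[\mu(B) = \int_{\R^d} P_\Delta(x,B)\,\mu(\diff x) = \int_{\R^d} \int_B p_\Delta(x,y)\,\diff y\,\mu(\diff x).\]
Since the integrand $(x,y) \mapsto p_\Delta(x,y)$ is non-negative and jointly measurable, Tonelli's theorem applies and permits interchanging the order of integration:
\[\mu(B) = \int_B \Big(\int_{\R^d} p_\Delta(x,y)\,\mu(\diff x)\Big)\,\diff y = \int_B \rho(y)\,\diff y, \qquad \rho(y) \coloneqq \int_{\R^d} p_\Delta(x,y)\,\mu(\diff x).\]
Hence $\mu$ is absolutely continuous with respect to Lebesgue measure with density $\rho$, and the pointwise bound $p_\Delta(x,y) \leq c$ together with $\mu(\R^d) = 1$ yields $\rho(y) \leq c$ for every $y \in \R^d$, i.e.\ $\lVert \rho \rVert_\infty \leq c < \infty$, which is the assertion.

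The only step that is not completely automatic is the joint measurability of $p_\Delta$ needed to invoke Tonelli, but this is a technicality rather than a genuine obstacle: the existence of a transition density is understood throughout the paper in the sense of Assumption \ref{invariant probability measure assumption}, i.e.\ as a measurable function on $\R^d\times\R^d$, and in any case a jointly measurable version of the density can always be selected. It is worth emphasising that the lemma is essentially a smoothing statement — a single application of the bounded kernel $P_\Delta$ regularises the invariant measure into one with a bounded density. In the setting of Subsection \ref{subsec: beta} the hypothesis $\sup_{x,y} p_\Delta(x,y) < \infty$ is available for \emph{every} $\Delta > 0$: for $\Delta \leq 1$ it is exactly \eqref{cond1:uni}, and for $\Delta > 1$ it follows from \eqref{cond1:uni} evaluated at time $1/2$ combined with the Chapman--Kolmogorov equation $p_\Delta(x,y) = \int p_{\Delta - 1/2}(x,z)\,p_{1/2}(z,y)\,\diff z \leq \sup_{z,y}p_{1/2}(z,y)$. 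This is precisely why Assumption \ref{ass: bounded invariant} may be taken for granted there.
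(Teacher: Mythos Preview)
Your proof is correct and in fact cleaner than the paper's. Both arguments start from the invariance identity $\mu(B)=\int_{\R^d}\int_B p_\Delta(x,y)\diff y\,\mu(\diff x)$, but then diverge: the paper first deduces $\mu\ll\lebesgue$ from this identity, invokes the Radon--Nikodym theorem to obtain \emph{some} density $\rho$, and then uses the Lebesgue differentiation theorem on the averages $\mu(B)/\lebesgue(B)\le c$ to conclude $\operatorname{ess\,sup}\rho\le c$, finally redefining $\rho$ on a null set to get a pointwise bounded version. You bypass all of this by swapping the order of integration via Tonelli and reading off the explicit formula $\rho(y)=\int_{\R^d}p_\Delta(x,y)\,\mu(\diff x)$, which is bounded by $c$ everywhere by the hypothesis and $\mu(\R^d)=1$. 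Your route is more elementary (no Lebesgue differentiation needed) and yields a concrete representation of the bounded density; the paper's route is slightly more indirect but relies only on standard measure-theoretic machinery and does not need to worry about joint measurability of $p_\Delta$, which you correctly flag as the one mild technicality in your argument.
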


The next result gives a tight variance bound on the integral $\int_0^T f(X_t)\diff{t}$ under $\beta$-mixing.
Its effectiveness for $\sup$-norm estimation of general Markov processes will be demonstrated in Section \ref{sec: density estimation}. 
Note in particular that, using boundedness of $\rho$ under \ref{invariant probability measure assumption} and \ref{ass: density bound}, the same rate can be obtained under \ref{ass: conv bound} from Proposition \ref{prop:varextra}.
Recall the definition of $\psi_d\colon(0,\mathrm{e})\to\R_+$ in \eqref{varpsi}.

\begin{proposition}\label{prop:varmulti}
Grant assumptions \ref{ass: density bound} and \ref{ass: beta mix}, and let $f$ be a bounded function with compact support $\mathcal S$ fulfilling $\lebesgue(\mathcal S)<1$.
Then, for any $d\ge2$, there exists a constant $C>0$ not depending on $f$ such that, for any $T > 0$,
\begin{equation} \label{eq: var mix}
\operatorname{Var}\left(\int_0^T f(X_t)\d t \right)\leq C T \Vert f\Vert_\infty^2\Vert\rho\Vert_\infty\lebesgue^2(\cS)\psi_d^2(\lebesgue(\cS)).
\end{equation}
\end{proposition}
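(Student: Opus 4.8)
The plan is to decompose the variance of the additive functional into a short-time part and a long-time part using stationarity, exactly as one does for Proposition~\ref{prop:varextra}, and then to control the long-time part via the $\beta$-mixing assumption \ref{ass: beta mix} rather than the local uniform transition density convergence \ref{ass: conv bound}. Writing $\bar f \coloneqq f - \mu(f)$, stationarity gives
\[
\Var\Big(\int_0^T f(X_t)\diff t\Big) = 2\int_0^T (T-s)\,\Cov(\bar f(X_0),\bar f(X_s))\diff s \le 2T\int_0^T \lvert \Cov(\bar f(X_0),\bar f(X_s))\rvert\diff s,
\]
so everything reduces to integrating the covariance. I would split this integral at $s=1$. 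For $s\le 1$ I would use the on-diagonal heat kernel bound from \ref{ass: density bound}: since $\lvert\Cov(\bar f(X_0),\bar f(X_s))\rvert \le \int\int \lvert f(x)\rvert\lvert f(y)\rvert p_s(x,y)\rho(x)\diff x\diff y + \mu(f)^2 \le \Vert f\Vert_\infty^2\Vert\rho\Vert_\infty \big(c_2 s^{-d/2}\wedge \text{(something)}\big)\lebesgue(\cS)^2$ roughly, and one needs to be careful: the naive bound $\int\int \lvert f\rvert\lvert f\rvert p_s \rho$ is at most $\Vert f\Vert_\infty^2 \Vert\rho\Vert_\infty \lebesgue(\cS) \sup_x\int_\cS p_s(x,y)\diff y \le \Vert f\Vert_\infty^2\Vert\rho\Vert_\infty\lebesgue(\cS)\min\{1,\, c_2 s^{-d/2}\lebesgue(\cS)\}$. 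Integrating $\min\{1, c_2 s^{-d/2}\lebesgue(\cS)\}$ over $s\in(0,1]$ produces precisely the factor $\lebesgue(\cS)\psi_d^2(\lebesgue(\cS))$ after the standard case analysis ($d=2$ gives the logarithm from $\int_{\lebesgue(\cS)}^1 s^{-1}\diff s$; $d\ge 3$ gives $\lebesgue(\cS)^{2/d}$ from balancing the two regimes at $s\sim\lebesgue(\cS)^{2/d}$). This is the same computation that appears in the proof of Proposition~\ref{prop:varextra} and can be cited from there.

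For the tail $s>1$ I would bound the covariance by a mixing argument. The key inequality is the classical covariance bound for $\beta$-mixing (or the cruder one via $\Vert P_s(x,\cdot)-\mu\Vert_{\TV}$): for bounded $\bar f$,
\[
\lvert\Cov(\bar f(X_0),\bar f(X_s))\rvert = \Big\lvert \int \Big(\int \bar f(y) P_s(x,\diff y)\Big)\bar f(x)\,\mu(\diff x)\Big\rvert \le \Vert f\Vert_\infty \int \lvert \bar f(x)\rvert \cdot \Vert P_s(x,\cdot)-\mu\Vert_{\TV}\,\mu(\diff x).
\]
Since $\bar f$ is supported (up to the constant $\mu(f)$) on $\cS$, one gets $\lvert\bar f(x)\rvert \le \Vert f\Vert_\infty(\one_\cS(x) + \mu(\cS))$ and so this is bounded by $2\Vert f\Vert_\infty^2\big(\mu(\cS) + \text{tail}\big)$ — but this only gives $\mu(\cS)$, not $\lebesgue(\cS)^2$. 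To recover the correct $\lebesgue^2(\cS)$ scaling one must instead iterate: use the heat kernel once at time $s-1$ (or at time $1$) to pass to a bounded density and then apply the mixing bound on the remaining time. Concretely, $\Cov(\bar f(X_0),\bar f(X_s)) = \Cov(\bar f(X_0), (P_1\bar f)(X_{s-1}))$ via the Markov property, and $P_1\bar f$ has sup-norm at most $\Vert f\Vert_\infty\Vert p_1\rVert_\infty\lebesgue(\cS)$ by the heat kernel bound at $t=1$ (with $\Vert p_1\Vert_\infty \le c_2$). Hence $\lvert\Cov(\bar f(X_0),\bar f(X_s))\rvert \le \Vert f\Vert_\infty\Vert\rho\Vert_\infty\lebesgue(\cS)\cdot \Vert f\Vert_\infty c_2\lebesgue(\cS)\cdot\Vert P_{s-1}(\cdot)-\mu\Vert$-type factor, and integrating $c_\kappa e^{-\kappa(s-1)}$ over $s>1$ converges. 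This yields a tail contribution of order $\Vert f\Vert_\infty^2\Vert\rho\Vert_\infty\lebesgue^2(\cS)$, which is dominated by the short-time term (since $\lebesgue(\cS)\psi_d^2(\lebesgue(\cS))\ge\lebesgue(\cS)$ for $\lebesgue(\cS)<1$ when $d\le 2$ and the bound $\lebesgue^2(\cS)$ is even smaller for $d\ge 3$ as well after checking $2-2/d\ge ...$; in any case one absorbs it into the constant). Combining the two pieces and multiplying by the overall factor $2T$ gives \eqref{eq: var mix}.

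The main obstacle I anticipate is getting the $\lebesgue(\cS)$-dependence sharp in the long-time part: the crude $\beta$-mixing covariance bound only sees $\mu(\cS)\asymp\lebesgue(\cS)$ (via boundedness of $\rho$), which is too weak to match the $\lebesgue^2(\cS)$ factor in \eqref{eq: var mix}, so one genuinely needs the "smoothing step" that pushes the function through $P_1$ using the on-diagonal bound from \ref{ass: density bound} to gain the second power of $\lebesgue(\cS)$. One must also be mildly careful that the constant $C$ does not depend on $f$ beyond $\Vert f\Vert_\infty$, $\lebesgue(\cS)$ and $\Vert\rho\Vert_\infty$ — which it does not, since $c_2$, $c_\kappa$, $\kappa$ are fixed by the assumptions. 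The restriction to $d\ge 2$ enters only through the form of the heat kernel bound in \ref{ass: density bound} (the $d=1$ case is handled separately by Proposition~\ref{prop:varextra}), and the case analysis defining $\psi_d$ is identical to the one already carried out there, so I would keep that part brief and refer back to the proof of Proposition~\ref{prop:varextra}.
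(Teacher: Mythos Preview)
Your short-time analysis on $[0,1]$ is fine and matches the paper. The gap is in the long-time part. You claim the pointwise bound
\[
\big\lvert\Cov(\bar f(X_0),\bar f(X_s))\big\rvert \;\le\; \Vert f\Vert_\infty\Vert\rho\Vert_\infty\lebesgue(\cS)\cdot \Vert f\Vert_\infty c_2\lebesgue(\cS)\cdot(\text{factor}\sim e^{-\kappa(s-1)}),
\]
but this is not available under \ref{ass: beta mix} alone. Writing $g=P_1\bar f$ and using $\mu(g)=0$ one gets $\Cov(\bar f(X_0),\bar f(X_s))=\int_{\cS} f(x)\rho(x)\,(P_{s-1}g)(x)\diff x$, and from here you face an either/or: bounding $\lvert P_{s-1}g(x)\rvert\le \lVert g\rVert_\infty\lesssim\lVert f\rVert_\infty\lebesgue(\cS)$ gives the second factor of $\lebesgue(\cS)$ but \emph{no} decay in $s$; bounding $\lvert P_{s-1}g(x)\rvert\le \lVert g\rVert_\infty\lVert P_{s-1}(x,\cdot)-\mu\rVert_{\TV}$ and integrating against $\mu$ gives only $\beta(s-1)$, i.e.\ $\int_1^\infty\lvert\Cov\rvert\diff s\lesssim\lVert f\rVert_\infty^2\lebesgue(\cS)$ with a \emph{single} power. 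A product of all three factors would require uniform (not merely $\mu$-averaged) exponential ergodicity.

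The paper fixes exactly this by introducing a second, $\lebesgue(\cS)$-dependent cut: it splits at $\delta=\lebesgue(\cS)^{2/d}$ and at $D=(1\vee -\tfrac{2}{\kappa}\log\lebesgue(\cS))\wedge T$. On $[D,T]$ the crude $\beta$-mixing bound already yields $\int_D^T\beta(v)\diff v\le \tfrac{c_\kappa}{\kappa}e^{-\kappa D}\le \tfrac{c_\kappa}{\kappa}\lebesgue^2(\cS)$, so no smoothing is needed there. On the medium range $[\delta,D]$ the paper extends the heat kernel bound via the semigroup identity ($p_t(x,y)=\int p_{t-1/2}(x,z)p_{1/2}(z,y)\diff z\le c_p$ for all $t>1/2$), picking up an innocuous factor $D\sim\log(1/\lebesgue(\cS))$ that is absorbed into $\lebesgue(\cS)^{2/d-1}$ for $d\ge 3$. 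Your smoothing-through-$P_1$ idea can be salvaged, but only by combining the two bounds above as $\min\{\beta(s-1),\,2\mu(\cS)\}$ and integrating, which produces $\lebesgue^2(\cS)\log(1/\lebesgue(\cS))$ rather than $\lebesgue^2(\cS)$; that is still enough for \eqref{eq: var mix}, and the implicit cut where $\beta(s-1)=2\mu(\cS)$ is precisely the paper's $D$. So the missing ingredient is the $\lebesgue(\cS)$-adaptive split of the tail, not a second application of the heat kernel.
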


\paragraph{Notation.}
Throughout the sequel, we denote by $\bm{\Sigma}$ the class of non-explosive, exponentially $\beta$-mixing Borel right Markov processes $\X$ such that assumptions \ref{invariant probability measure assumption} and \ref{ass: density bound} hold (and hence \ref{ass: bounded invariant} is in place, i.e., the invariant density $\rho$ is bounded).
Moreover, in dimension $d=1$ we assume that \ref{ass: conv bound} is in place with a rate function $r_{\mathcal{S}}$ which is monotone wrt the compact sets $\mathcal{S}$ in the sense that 
\begin{equation}\label{eq: mon supp}
\mathcal{S}_1 \subset \mathcal{S}_2 \implies c_{\mathcal{S}_1} = \int_1^\infty r_{\mathcal{S}_1}(t) \diff{t} \leq \int_1^\infty r_{\mathcal{S}_2}(t) \diff{t} = c_{\mathcal{S}_2} < \infty.
\end{equation}
Alternatively, if we do not want to restrict to exponentially $\beta$-mixing processes, consider the class of processes $\bm{\Theta}$ consisting of $d$-dimensional non-explosive Borel right processes such that \ref{invariant probability measure assumption}--\ref{ass: conv bound} hold, where again the constants $c_{\mathcal{S}}$ appearing in \ref{ass: conv bound} satisfy \eqref{eq: mon supp}. Note that if $\tilde{\bm{\Theta}}$ is the restriction of $\bm{\Theta}$ containing the class of processes $\X$ satisfying the assumptions of Proposition \ref{prop:betamix} or Proposition \ref{prop2:betamix}, then  $\tilde{\bm{\Theta}} \subset \bm{\Sigma}$.

\section{Uniform moment bounds}\label{sec: deviation}
We now turn to deriving uniform moment bounds for integral functionals of the ergodic process $\X$.
These are intimately connected with Bernstein-type tail inequalities, which due to their crucial importance for many probabilistic and statistical applications---such as the derivation of limit theorems or upper bound statements for nonparametric estimation procedures---have been excessively studied in the literature (see Section 1.1 of \citep{gaoetal13} for an overview).
Both a Lyapunov function method and a functional inequalities approach can be used for deriving results on the concentration behaviour of additive functionals of $\X$.
\citep{catgui08} establish non-asymptotic deviation bounds for
\[\PP\left(\Big|\int_0^t f(X_s)\diff s-\int f\d\mu\Big| \ge r\right),\qquad f\in L^1(\mu),\]
using different moment assumptions for $f$ and regularity conditions for $\mu$, ``regularity'' referring to the condition that $\mu$ may satisfy various functional inequalities (F-Sobolev, generalized Poincar\'e, etc.).
In a symmetric Markovian setting and assuming a spectral gap, Lezaud \citep{lez01} uses Kato's perturbation theory for proving Bernstein-type concentration inequalities for empirical means of the form $\int_0^t f(X_s)\diff s$, the upper bound depending on the asymptotic variance of $f$.
Amongst other methods, \citep{gaoetal13} exploit both a Lyapunov function method and a functional inequalities approach for extending Lezaud's result to inequalities for possibly unbounded $f$.
Going beyond the symmetric case, Lyapunov-type conditions can also be used for verifying exponential mixing properties, paving the way to generalizing concentration results based on independent observations to the dependent case. For corresponding results for discrete random  (Markov) sequences under different mixing or ergodicity assumptions, we refer to \cite{adam08,adam15,bertail19, clemencon01, dedecker2015, merlevede09,lema2020, samson00}

\subsection{General framework}
Our main focus in this subsection is on deriving corresponding uniform moment inequalities of empirical processes, using merely the previously introduced assumptions (in particular, the $\beta$-mixing property), and without imposing any additional conditions on the process. We emphasize that for this section no assumption on the existence of transition or invariant densities is needed, but that we only work within an ergodic $\beta$-mixing framework. Moreover, the results are established for $\beta$-mixing Markov processes with arbitrary topological state space $\mathcal{X}$, not necessarily equal to $\R^d$, and general mixing rate. That is, we suppose in this section that 
\[\beta(t) = \int_{\cX} \lVert P_t(x,\cdot) - \mu \rVert_{\mathrm{TV}}\, \mu(\diff{x}) \leq \Xi(t),\]
for some rate function $\Xi(t)$ decreasing to $0$ as $t \to \infty$.
We aim to prove moment bounds for suprema of the form
\[
\sup_{g\in\mathcal G}|\G_t(g)|\eqqcolon \|\G_t\|_{\mathcal G},\quad\text{ for } \G_t(g)\coloneqq \frac{1}{\sqrt{t}}\int_0^tg(X_s)\d s,
\]
where the supremum is taken over entire (possibly infinite-dimensional) function classes $\mathcal G\subset \mathcal{B}_b(\cX)$ of $\mu$-centered measurable bounded functions on $\cX$.
Similarly to \citep{bar10} and \citep{dirk15}, we apply the generic chaining device for the derivation of our result. The basic strategy of the proof is splitting the integral into blocks of length $m_t$, construct an independent Berbee coupling based on the $\beta$-mixing property as described in Viennet \citep{viennet}, and then use the classical Bernstein inequality for i.i.d.\ random variables for the coupled integral blocks to drive the chaining procedure from \citep{dirk15}. The use of Berbee's coupling lemma is a well-established method for studying empirical processes of discrete $\beta$-mixing sequences, see \cite[Chapter 8]{rio00}, and has recently been employed in \cite{amorino2020invariant} for establishing $L^2$ oracle bounds for an adaptive estimator of the invariant density of a class of exponentially $\beta$-mixing L\'evy-driven jump diffusions.

We now formulate a crucial tool for deriving upper bounds on the \emph{$\sup$-norm risk} of estimators of the invariant density of processes $\X\in\bm{\Sigma}$.
Our final moment bound on the supremum of the process $\G_t$ is stated in terms of entropy integrals of the indexing function class $\mathcal G$. In many applications, the corresponding assumption is straightforward to verify.
For any given $\ep>0$, denote by $\mathcal N(\ep,\mathcal G,d)$ the covering number of $\mathcal G$, i.e., the smallest number of balls of $d$-radius $\ep$ needed to cover $\mathcal G$.
Furthermore, given $f,g\in\mathcal G$, let $d_\infty(f,g)\coloneqq \|f-g\|_\infty$ and
\[d_{\G,t}^2(f,g)\coloneqq\sigma_t^2(f-g),\text{ where } \sigma_t^2(f)\coloneqq\Var\left(\frac{1}{\sqrt t}\int_0^{t}f(X_s)\d s\right).\]

\begin{theorem}\label{Bernstein}
Suppose that $\X$ is $\beta$-mixing with rate function $\Xi(t)$. Let $\mathcal G$ be a countable class of bounded real-valued functions with $\mu(g)=0$ and let $m_t \in (0, t\slash 4]$.
Then, there exist $\tau \in [m_t, 2m_t]$ and constants $\tilde C_1,\tilde C_2>0$ such that, for any $1\le p<\infty$,
\begin{equation} \label{eq: unimom}
\begin{split}
\left(\E\left[\|\G_t\|_{\mathcal G}^p\right]\right)^{1/p}
&\leq \tilde C_1\int_0^\infty\log\mathcal N\big(u,\mathcal G,\tfrac{2m_t}{\sqrt{t}}d_\infty\big)\d u+\tilde C_2\int_0^\infty\sqrt{\log \mathcal N(u,\mathcal G,d_{\G,\tau})}\diff u\\
&\qquad+4\sup_{g\in\mathcal G}\Big(\frac{2m_t}{\sqrt{t}}\|g\|_\infty \tilde c_1p+ \lVert g\rVert_{\mathbb{G},\tau}\tilde c_2\sqrt p +\frac{1}{2}\lVert g \rVert_{\infty} \sqrt{t} \Xi(m_t)^{1/p}\Big),
\end{split}
\end{equation}
for positive constants $\tilde c_1,\tilde c_2$ defined in \eqref{def:c1c2}. 
\end{theorem}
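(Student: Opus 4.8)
\emph{Proof plan.} Following the block decomposition, coupling and chaining scheme announced before the statement, I fix $m_t\in(0,t/4]$, set $N\coloneqq\lceil t/(2m_t)\rceil\ge2$ and $\tau\coloneqq t/N$, and note that $m_t\le t/4$ forces $\tau\in[m_t,2m_t]$ --- this is the $\tau$ in the statement. Partitioning $[0,t]$ into $N$ consecutive intervals $I_1,\dots,I_N$ of common length $\tau$, I write $\G_t(g)=t^{-1/2}\sum_{j=1}^N Y_j(g)$ with $Y_j(g)\coloneqq\int_{I_j}g(X_s)\diff s$, split the sum into its odd- and even-indexed parts, and treat the two parity classes separately; by Minkowski's inequality their moment bounds add, which produces the leading constant in front of the diagonal term. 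Within a fixed parity class the retained blocks are pairwise separated by an interval of length $\tau\ge m_t$.

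Fixing one parity class with blocks $I_{j_1},\dots,I_{j_K}$, $K\le N$, I invoke Berbee's coupling lemma in Viennet's form: on a possibly enlarged probability space there are mutually independent processes $\tilde X^{(1)},\dots,\tilde X^{(K)}$, each distributed as the corresponding stationary block under $\PP=\PP^\mu$, such that the event $E$ that some block is not reproduced by its copy satisfies $\PP(E)\le K\beta(m_t)\le N\Xi(m_t)$, the per-block bound $\beta(\tau)\le\beta(m_t)\le\Xi(m_t)$ following from monotonicity of $\beta$ and the Markov property. On $E^{c}$ the parity-class part of $\G_t$ equals the independent sum $S(g)\coloneqq t^{-1/2}\sum_{i=1}^K\tilde Y_i(g)$ with $\tilde Y_i(g)=\int_{I_{j_i}}g(\tilde X^{(i)}_s)\diff s$ independent and, since $\mu(g)=0$, centered. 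For $f,g\in\mathcal G$ each summand of $\sqrt t\,(S(f)-S(g))$ is bounded by $\tau\lVert f-g\rVert_\infty$ in absolute value and $\Var(S(f)-S(g))\le\tfrac12\sigma_\tau^2(f-g)$, so Bernstein's inequality gives, for all $u>0$,
\[\PP\big(\lvert S(f)-S(g)\rvert\ge u\big)\le2\exp\!\Big(-c\,\min\big\{u^2\big/d_{\G,\tau}^2(f,g),\ \sqrt t\,u\big/\big(m_t\,d_\infty(f,g)\big)\big\}\Big)\]
for a universal $c>0$: the increments of $S$ are controlled by the sub-Gaussian metric $d_{\G,\tau}$ and the sub-exponential metric $(2m_t/\sqrt t)\,d_\infty$.

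I then apply the mixed-tail generic chaining bound of Dirksen to $S$ on $\mathcal G$ with these two metrics, bounding the associated chaining functionals by the Dudley integrals $\int_0^\infty\sqrt{\log\mathcal N(u,\mathcal G,d_{\G,\tau})}\,\diff u$ and $\int_0^\infty\log\mathcal N(u,\mathcal G,\tfrac{2m_t}{\sqrt t}d_\infty)\,\diff u$; anchoring at an arbitrary $g_0\in\mathcal G$, whose contribution is absorbed by the single-function Bernstein moment bound, this yields a tail estimate for $\lVert S\rVert_{\mathcal G}$ with sub-Gaussian scale the $d_{\G,\tau}$-diameter and sub-exponential scale the $(2m_t/\sqrt t)d_\infty$-diameter of $\mathcal G$. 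Integrating against $p\,u^{p-1}\,\diff u$ turns this into a bound of the form $(\E\lVert S\rVert_{\mathcal G}^p)^{1/p}\lesssim(\text{the two entropy integrals})+\sup_{g\in\mathcal G}\big(\tfrac{2m_t}{\sqrt t}\lVert g\rVert_\infty\,p+\lVert g\rVert_{\G,\tau}\sqrt p\big)$, the diameters having been replaced by suprema over $\mathcal G$ via the triangle inequality. Finally, the contribution of $E^{c}$ is handled by the crude bound $\lVert S\rVert_{\mathcal G}\le\sqrt t\sup_g\lVert g\rVert_\infty$ together with Markov's inequality, giving a term $\lesssim\sqrt t\sup_g\lVert g\rVert_\infty\,\Xi(m_t)^{1/p}$; recombining the two parity classes produces \eqref{eq: unimom} with universal $\tilde C_1,\tilde C_2$ and the constants $\tilde c_1,\tilde c_2$ coming from the Bernstein and chaining steps.

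The main obstacle is transferring Dirksen's mixed-tail chaining estimate into a moment statement with \emph{universal} constants and with precisely the diagonal term $\tfrac{2m_t}{\sqrt t}\lVert g\rVert_\infty\tilde c_1p+\lVert g\rVert_{\G,\tau}\tilde c_2\sqrt p+\tfrac12\lVert g\rVert_\infty\sqrt t\,\Xi(m_t)^{1/p}$: one must track the $2m_t/\sqrt t$ scaling through the chain, correctly identify the two diameters and the anchor term, and combine the sub-Gaussian and sub-exponential branches without losing the right powers of $p$. Setting up the Berbee coupling so that the blocks are exact stationary copies with the stated failure probability, and absorbing the block-counting factor in $\PP(E^{c})$ through the eventual choice of $m_t$ and $p$ in the applications, is the remaining delicate point.
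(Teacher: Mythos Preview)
Your overall strategy matches the paper's proof: block decomposition into blocks of length $\tau\in[m_t,2m_t]$, odd/even splitting, Berbee coupling, Bernstein on the independent blocks, and Dirksen's mixed-tail chaining bounded by the two entropy integrals. The definition of $\tau$ differs cosmetically (the paper first proves the case $t=2n_tm_t$ with $n_t\in\N$ and then sets $\tau=\tilde m_t=t/(2\lfloor t/(2m_t)\rfloor)$), but this is immaterial.

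There is, however, a genuine gap in your treatment of the coupling error, and it is precisely the ``remaining delicate point'' you flag but do not resolve. By passing to the global event $E=\{\text{some block differs}\}$ with $\PP(E)\le N\Xi(m_t)$ and then using the crude bound $\lVert\cdot\rVert_{\mathcal G}\le\sqrt t\sup_g\lVert g\rVert_\infty$ on $E$, you obtain a term of order $\sqrt t\sup_g\lVert g\rVert_\infty\,(N\Xi(m_t))^{1/p}$, not $\sqrt t\sup_g\lVert g\rVert_\infty\,\Xi(m_t)^{1/p}$ as the statement requires. The extra $N^{1/p}$ does \emph{not} get absorbed later --- the theorem is a fixed inequality, not an asymptotic one, and $N\sim t/m_t$ can be arbitrarily large. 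The paper avoids this by never forming $E$: it writes the coupling error as $\sum_{j,k}\big(I_g(X^{j,k})-I_g(\hat X^{j,k})\big)\mathbf 1_{\{X^{j,k}\ne\hat X^{j,k}\}}$, takes the supremum over $g$ inside (the indicator does not depend on $g$), and applies the $L^p$ triangle inequality \emph{term by term}:
\[
\Big\lVert\sup_{g}\tfrac{1}{\sqrt t}\sum_{j,k}\big|I_g(X^{j,k})-I_g(\hat X^{j,k})\big|\mathbf 1_{\{X^{j,k}\ne\hat X^{j,k}\}}\Big\rVert_p
\le \tfrac{1}{\sqrt t}\sum_{j,k}2\tau\sup_g\lVert g\rVert_\infty\,\PP(X^{j,k}\ne\hat X^{j,k})^{1/p},
\]
which gives $2\sqrt t\sup_g\lVert g\rVert_\infty\,\Xi(m_t)^{1/p}$ since the number of blocks times $\tau$ equals $t$. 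Replace your union-bound argument by this, and the proof goes through with the stated constants. (Also, where you write ``the contribution of $E^{c}$'' you mean $E$.)
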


\begin{remark}
Consider $p =1$ and the specific choice of $m_t = \kappa^{-1} \log t$ in case of exponential $\beta$-mixing rate $\Xi(t) = c_\kappa \exp(-\kappa t)$. 
Then, the above result implies that
\[
\E\left[\|\G_t\|_{\mathcal G}\right] \lesssim \int_0^\infty\log\mathcal N\big(u,\mathcal G,\tfrac{\log t}{\sqrt{t}}d_\infty\big)\d u+ \int_0^\infty\sqrt{\log \mathcal N(u,\mathcal G,d_{\G,\tau})}\diff u +\sup_{g\in\mathcal G}\Big(\frac{\log t}{\sqrt{t}}\|g\|_\infty + \lVert g\rVert_{\mathbb{G},\tau}\Big).
\]
If we considered the related discrete time problem of finding uniform moment bounds for additive functionals $\frac{1}{\sqrt{n}} \sum_{k=0}^n g(X_k)$  of a Markov chain $(X_n)_{n \in \N_0}$ and assumed exponential ergodicity of the chain, using the state of the art Bernstein inequality given in \cite[Theorem 6]{adam08} (see also \citep{lema2020}) for the generic chaining procedure would yield an analogous result with an asymptotic version of the variance norm. In particular, the $\log$-scaling of the $\sup$-norm is also present in the discrete time case as a consequence of exponential ergodicity, whereas in the i.i.d.\ case this factor would disappear. Our direct coupling approach therefore yields optimal uniform moment bounds and makes the contribution of the mixing term transparent, which paves the way for studying nonparametric implications of sub-exponential mixing rates for $\sup$-norm estimation problems in continuous time.
\end{remark}


To get a first taste of the consequences of Theorem \ref{Bernstein}, consider the trivial situation where $\cG$ is a singleton set.
This allows the study of rates for the $L^p$-version of von Neumann's ergodic theorem\footnote{Not referring to the $L^p$-statement as Birkhoff's ergodic theorem is not without reason, see \cite{zund2002}.} for continuous time ergodic Markov processes which states that, for $g \in L^{p}(\mu)$,
\[\frac{1}{T} \int_0^T g(X_s) \diff{s} \underset{t  \to \infty}{\longrightarrow} \mu(g), \quad \text{in } L^p(\PP).\]
Indeed, $\beta$-mixing implies strong mixing such that the $\sigma$-algebra of shift invariant sets is $\PP$-trivial and hence the ergodic theorem is satisfied.

\begin{corollary}\label{theo: neumann}
Suppose that $\X$ is exponentially $\beta$-mixing. Then, there exists a constant $C > 0$ such that, for any $T>0$, $1 \leq p < \infty$ and any bounded, measurable function $g$,
\[\Big\lVert \frac{1}{T} \int_0^T g(X_t) \diff{t} - \mu(g) \Big\rVert_{L^p(\PP)} \leq Cp\lVert g \rVert_\infty \frac{1}{\sqrt{T}}.\]
If $\X$ is polynomially mixing of degree $\alpha > 1$, i.e., $\Xi(t) \lesssim t^{-\alpha}$, then for any $p \geq 1$ and $T \geq 4^{(\alpha +p)/\alpha}$ we have
\[\Big\lVert \frac{1}{T} \int_0^T g(X_t) \diff{t} - \mu(g) \Big\rVert_{L^p(\PP)} \lesssim \lVert g \rVert_{\infty} T^{-\big(\tfrac{1}{2} \wedge \tfrac{\alpha}{\alpha +p}\big)}.\]
\end{corollary}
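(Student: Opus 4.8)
The plan is to derive both inequalities from Theorem~\ref{Bernstein} applied to the \emph{singleton} class $\mathcal G=\{g-\mu(g)\}$, for which every covering number equals $1$, so that $\log\mathcal N(u,\mathcal G,\cdot)=0$ for all $u>0$ and the two entropy integrals in \eqref{eq: unimom} vanish. Writing $g_0\coloneqq g-\mu(g)$, one has $\lVert g_0\rVert_\infty\le 2\lVert g\rVert_\infty$ and $\G_t(g_0)=\sqrt t\bigl(\tfrac1t\int_0^tg(X_s)\,\d s-\mu(g)\bigr)$, so Theorem~\ref{Bernstein} yields, for some $\tau\in[m_t,2m_t]$,
\[
\Bigl\lVert\tfrac1t\!\int_0^tg(X_s)\,\d s-\mu(g)\Bigr\rVert_{L^p(\PP)}\le\frac{4}{\sqrt t}\Bigl(\frac{2m_t}{\sqrt t}\lVert g_0\rVert_\infty\tilde c_1p+\lVert g_0\rVert_{\G,\tau}\tilde c_2\sqrt p+\tfrac12\lVert g_0\rVert_\infty\sqrt t\,\Xi(m_t)^{1/p}\Bigr),
\]
and the task is to control the three summands by a good choice of $m_t$. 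For the middle one I would first record a variance bound \emph{uniform in the block length}: using stationarity, the Markov property and the elementary estimate $\lvert\Cov(g_0(X_0),g_0(X_v))\rvert=\bigl\lvert\int g_0\,(P_vg_0)\,\d\mu\bigr\rvert\le\lVert g_0\rVert_\infty^2\int\lVert P_v(x,\cdot)-\mu\rVert_{\TV}\,\mu(\d x)\le\lVert g_0\rVert_\infty^2\,\beta(v)$, one gets $\Var\bigl(\int_0^\tau g_0(X_s)\,\d s\bigr)\le 2\tau\lVert g_0\rVert_\infty^2\int_0^\infty\beta(v)\,\d v$, with $\int_0^\infty\beta<\infty$ since $\beta\le 2$ and $\beta\le\Xi$ with $\Xi$ integrable at infinity ($\int_0^\infty\Xi\le c_\kappa/\kappa$ in the exponential case, $\alpha>1$ in the polynomial one). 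Hence $\lVert g_0\rVert_{\G,\tau}^2=\tfrac1\tau\Var\bigl(\int_0^\tau g_0(X_s)\,\d s\bigr)\le 2\lVert g_0\rVert_\infty^2\int_0^\infty\beta$ with a bound independent of $\tau$, so the middle summand is always $\lesssim p\lVert g\rVert_\infty/\sqrt t$ and poses no problem. (The crude pathwise bound $\lVert g_0\rVert_{\G,\tau}\le\sqrt\tau\,\lVert g_0\rVert_\infty$ would \emph{not} be enough, since $\tau\asymp m_t$ grows with $t$.)

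For the exponential case $\Xi(t)=c_\kappa\e^{-\kappa t}$ (WLOG $c_\kappa\ge\e$) I would split into three regimes, governed by a threshold $t^\ast=t^\ast(\kappa,c_\kappa)$. For $t<t^\ast$ the trivial bound $\lVert\cdot\rVert_{L^p(\PP)}\le\lVert g_0\rVert_\infty\le 2\sqrt{t^\ast}\,\lVert g\rVert_\infty/\sqrt t$ already suffices. For $t\ge t^\ast$ and $p\ge\sqrt t$ I take $m_t=1\in(0,t/4]$, so that $\Xi(1)^{1/p}\le c_\kappa$ while $p/\sqrt t\ge1$, and the first summand is $\lesssim p\lVert g\rVert_\infty/t\le p\lVert g\rVert_\infty/\sqrt t$. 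The delicate regime is $t\ge t^\ast$, $1\le p\le\sqrt t$, where I set
\[
m_t\coloneqq\frac{p\log(t/p^2)}{2\kappa}+\frac{\log c_\kappa}{\kappa},
\]
which lies in $(0,t/4]$ for $t^\ast$ large and gives the clean identity $\Xi(m_t)^{1/p}=p/\sqrt t$, so the third summand contributes exactly $2\lVert g_0\rVert_\infty\,p/\sqrt t$. The crucial point is that $m_t=\mathsf O(\sqrt t)$ \emph{uniformly in} $p\le\sqrt t$: since $\sup_{0<u<1}\sqrt u\log(1/u)=2/\e$, taking $u=p^2/t$ gives $p\log(t/p^2)\le(2/\e)\sqrt t$, hence $m_t\le(\e\kappa)^{-1}\sqrt t+\kappa^{-1}\log c_\kappa$, whence also $\tfrac{2m_t}{\sqrt t}\lVert g_0\rVert_\infty\tilde c_1p\cdot\tfrac4{\sqrt t}\lesssim p\lVert g\rVert_\infty/\sqrt t$. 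Collecting the three regimes gives the first inequality with a constant $C=C(\kappa,c_\kappa)$.

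For the polynomial case $\Xi(t)\lesssim t^{-\alpha}$ with $\alpha>1$, the right choice is $m_t\coloneqq t^{p/(\alpha+p)}$. Then the constraint $m_t\le t/4$ of Theorem~\ref{Bernstein} is \emph{equivalent} to $t^{\alpha/(\alpha+p)}\ge4$, i.e.\ precisely to the standing hypothesis $t\ge4^{(\alpha+p)/\alpha}$ (which also forces $m_t\ge1$); with this choice $\Xi(m_t)^{1/p}\lesssim t^{-\alpha/(\alpha+p)}$ and $\tfrac{m_tp}{t}=p\,t^{-\alpha/(\alpha+p)}$, so the first and third summands are of order $t^{-\alpha/(\alpha+p)}$, the middle one of order $t^{-1/2}$, and jointly they produce the exponent $\tfrac12\wedge\tfrac\alpha{\alpha+p}$, up to a multiplicative constant depending on $p,\alpha$.

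The step I expect to be the genuine obstacle is the uniformity in $p$ for the exponential bound. The obvious choice $m_t\asymp p\log t$ (dictated by wanting $\Xi(m_t)^{1/p}\asymp t^{-1/2}$) makes the first summand carry a factor $\asymp p^2\log t/t$, which is \emph{not} $\mathsf O(p/\sqrt t)$ once $p$ reaches order $\sqrt t/\log t$; this is exactly why one must instead subtract $p\log p/\kappa$ inside $m_t$ and invoke $\sqrt u\log(1/u)\le2/\e$, which together keep $m_t=\mathsf O(\sqrt t)$ for all admissible $p$ simultaneously while still forcing the mixing term $\Xi(m_t)^{1/p}$ to decay at rate $p/\sqrt t$. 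In the polynomial case this tension is absent, since the hypothesis $t\ge4^{(\alpha+p)/\alpha}$ already confines $p$ to the range $p\lesssim\log t$.
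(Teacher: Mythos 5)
Your proposal is correct, and its skeleton coincides with the paper's: apply Theorem \ref{Bernstein} to the singleton class $\{g-\mu(g)\}$ so that both entropy integrals vanish, bound $\lVert g-\mu(g)\rVert_{\G,\tau}^2$ uniformly in the block length by $2\lVert g-\mu(g)\rVert_\infty^2\int_0^\infty\beta(v)\,\d v$ (finite since $\beta\le 2$ and $\Xi$ is integrable, which is where $\alpha>1$ enters), and then tune $m_t$; your polynomial choice $m_t=t^{p/(\alpha+p)}$ with the constraint $m_t\le t/4$ equivalent to $t\ge 4^{(\alpha+p)/\alpha}$ is exactly the paper's. The one place you diverge is the exponential case, and there the obstacle you single out as ``genuine'' is an artifact of insisting on a $p$-dependent block length: the paper simply takes $m_T=\sqrt T$, for which the mixing term becomes $2\lVert g-\mu(g)\rVert_\infty c_\kappa^{1/p}\e^{-\kappa\sqrt T/p}$, and the elementary bound $x\e^{-\kappa x}\le(\e\kappa)^{-1}$ applied with $x=\sqrt T/p$ gives $\e^{-\kappa\sqrt T/p}\le p/(\e\kappa\sqrt T)$ uniformly in $p\ge1$, while the first summand is trivially of order $p/\sqrt T$ since $2m_T/\sqrt T=2$. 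So no regime splitting and no calibration of $m_t$ through $\Xi(m_t)^{1/p}=p/\sqrt t$ is needed (both arguments still require the trivial bound $2\lVert g\rVert_\infty$ for small $T$, where $m_T\le T/4$ fails --- a point the paper glosses over and you handle explicitly). Your three-regime construction with $m_t=\tfrac{p\log(t/p^2)}{2\kappa}+\tfrac{\log c_\kappa}{\kappa}$ and the estimate $\sup_{0<u\le1}\sqrt u\log(1/u)=2/\e$ is sound and yields the same constant structure $C=C(\kappa,c_\kappa)$, just at the price of considerably more bookkeeping; what the paper's choice buys is that a single $p$-free bandwidth works for all moments simultaneously, which is also what makes the one-line proof possible.
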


\subsection{Deviation inequalities for suprema of empirical Markov processes}
Theorem \ref{Bernstein} provides a foundation for the derivation of deviation inequalities, as they are needed, for example, for bounding the $\sup$-norm risk of estimators and for the convergence analysis of adaptive estimation procedures.
We will focus on the question of invariant density estimation for Borel right Markov processes, introduced and discussed in Section \ref{sec:basic}.
Recall the definition of $\bm{\Sigma}$ and $\bm{\Theta}$ at the end of that section. 
Given the observation $(X_s)_{0\le s\le T}$, a natural kernel estimator for the invariant density $\rho$ on a domain $D$ of a Markov process $\X \in \bm{\Sigma} \cup \bm{\Theta}$ is given by
\begin{equation}\label{defdensest}
\hat\rho_{h,T}(x)=\frac1T\int_0^TK_h(x-X_s)\d s,\quad x\in\R^d, \quad \text{ where } K_h(\cdot) \coloneqq h^{-d} K(\cdot \slash h), \quad h>0,
\end{equation}
for some smooth, Lipschitz continuous kernel function $K\colon\R^d\to\R$ with compact support $[-1/2,1/2]^d$. 
The knowledge of the invariant density is not only a question of its own interest, but is also needed, among other things, for the implementation of drift estimation procedures or data-driven methods of stochastic control.
Furthermore, this specific estimation problem can be regarded as an acid test for the quality of the statistical analysis:
It is known that the invariant density of (possibly multidimensional) diffusion processes can be estimated with a faster convergence rate than is feasible in the classical discrete i.i.d.\ or weak dependency context.
However, these superior convergence rates can only be verified with sufficiently tight estimates in the proof of the upper bound, more precisely, for the stochastic error part.
Indeed, denoting $\mathbb{H}_{h,T}(x)\coloneqq \hat{\rho}_{h,T}(x)-\E[\hat{\rho}_{h,T}(x)]$, we have the decomposition
\begin{equation} \label{eq: rate0}
\widehat{\rho}_{h,T}(x)-\rho(x)=\mathbb{H}_{h,T}(x)+(\rho \ast K_h-\rho)(x).
\end{equation}
While the bias part is bounded using standard arguments, tight upper bounds on (the supremum of) the stochastic error require specific probabilistic tools.
For bounding the $p$-th uniform moment $\E[\sup_{x\in D}\vert\mathbb H_{h,T}(x)\vert^p]$, we want to apply Theorem \ref{Bernstein} to the function class
\begin{equation}\label{def:G}
\mathcal{G} \coloneqq \big\{\overline{K}((x-\cdot)\slash h): x \in D \cap \mathbb{Q}^d\big\}, \quad \text{ where } \overline{K}((x-\cdot)\slash h) = K((x-\cdot)\slash h)- \mu(K((x-\cdot)\slash h)),
\end{equation}
for some kernel function $K$ with Lipschitz constant $L$ wrt to the $\sup$-norm $\lVert \cdot \rVert_\infty$, and the bandwidth $h$ chosen in $(0,1)$. The following uniform deviation result is central for this purpose. Recall that if $\X \in \bm{\Sigma}$, then by definition, $\X$ is exponentially $\beta$-mixing, i.e., $\beta$-mixing with rate function $\Xi(t) = c_\kappa \mathrm{e}^{-\kappa t}$ for some constants $c_\kappa, \kappa > 0$. 

\begin{lemma}\label{dev:ineq}
Suppose that $\X \in \bm{\Theta} \cup \bm{\Sigma}$ and additionally assume in case $\X \in \bm{\Theta}$ that $\X$ is $\beta$-mixing with strictly decreasing rate function $\Xi(t)$. Then, for any $u_T \geq 1$ such that $\Xi^{-1}(T^{-u_T}) \in \mathsf{o}(T)$ and $T^{-2} \leq h = h_T \in \mathsf{o}(1)$, there exists a constant $c^\ast > 0$ such that for large enough $T$ 
\[\Pro\left(\big\|\hat\rho_{h,T}-\E\hat\rho_{h,T}\big\|_{L^\infty(D)}\geq c^\ast\left( \frac{u_T + \log T}{Th^d} \Xi^{-1}(T^{-u_T}) +T^{-\frac 1 2}\psi_d(h^d)\sqrt{u_T \vee \log(h^{-1})}\right) \right)\leq \e^{-u_T}.\]
In particular, when $\X \in \bm{\Sigma}$, for any $\gamma > 0$ and $u_T \in [1,\gamma \log T]$ there exists a constant $c_\gamma > 0$ such that for large enough $T$
\[\Pro\left(\big\|\hat\rho_{h,T}-\E\hat\rho_{h,T}\big\|_{L^\infty(D)}\geq c_\gamma\Upsilon_{h,T}(u_T)\ \right)\leq \e^{-u_T},\]
where
\begin{equation}\label{def:Ups}
\Upsilon_{h,T}(u)\coloneqq \frac{u(\log T)^2}{Th^d} +T^{-\frac 1 2}\psi_d(h^d)\sqrt{u\vee \log(h^{-1})},\quad u\ge 1.
\end{equation}
\end{lemma}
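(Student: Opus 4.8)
The plan is to apply Theorem~\ref{Bernstein} to the function class $\mathcal G$ defined in \eqref{def:G}, convert the resulting $p$-th moment bound into a deviation inequality via Markov's inequality, and then optimise over the free block-length parameter $m_t$. First I would note that $\mathbb H_{h,T}(x) = T^{-1/2}\,\G_T(\overline K((x-\cdot)/h))$, so that $\|\hat\rho_{h,T}-\E\hat\rho_{h,T}\|_{L^\infty(D)} = T^{-1/2}\|\G_T\|_{\mathcal G}$ (using continuity of $\rho$ and of the kernel to pass from the countable dense index set $D\cap\mathbb Q^d$ to all of $D$). The key inputs from the earlier part of the paper are: (i) the variance bound of Proposition~\ref{prop:varmulti} (for $\X\in\bm\Sigma$, $d\ge2$) or Proposition~\ref{prop:varextra} together with \eqref{eq: mon supp} (for $\X\in\bm\Theta$, and for $d=1$), which gives $\sigma_T^2(\overline K((x-\cdot)/h)) \lesssim \|K\|_\infty^2 h^d \psi_d^2(h^d)$ after noting $\lambda(\operatorname{supp}) \asymp h^d$ and $\|\overline K((x-\cdot)/h)\|_\infty \lesssim h^{-d}$; and (ii) standard entropy estimates for the class $\mathcal G$ — since the kernel is Lipschitz with constant $L$ w.r.t.\ $\|\cdot\|_\infty$ and $D$ is bounded, one has $\log\mathcal N(u,\mathcal G, d_\infty) \lesssim d\log(Lh^{-1}/u \vee 1)$, and via the variance bound $d_{\G,\tau}(f,g) \lesssim h^{d/2}\psi_d(h^d)\,d_\infty(f,g)^{?}$ — more carefully, one controls $d_{\G,\tau}$ by $C(h^{d/2}\psi_d(h^d)\wedge \sqrt{h^{-d}})\cdot$(something), so that both entropy integrals in \eqref{eq: unimom} converge and evaluate to expressions of order (up to logs) $\tfrac{m_T}{\sqrt T} h^{-d}$ and $h^{d/2}\psi_d(h^d)\sqrt{\log(h^{-1})}$ respectively.

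Next I would collect the four terms on the right-hand side of \eqref{eq: unimom}: the two entropy integrals, the $\sup$-norm term $\tfrac{2m_T}{\sqrt T}\|g\|_\infty p \lesssim \tfrac{m_T}{\sqrt T}h^{-d}p$, the variance term $\|g\|_{\mathbb G,\tau}\sqrt p \lesssim h^{d/2}\psi_d(h^d)\sqrt p$, and the mixing tail term $\tfrac12\|g\|_\infty\sqrt T\,\Xi(m_T)^{1/p} \lesssim h^{-d}\sqrt T\,\Xi(m_T)^{1/p}$. Dividing by $\sqrt T$ (to return to $\|\mathbb H_{h,T}\|_{L^\infty(D)}$) and applying Markov's inequality at level $p$, I would choose $p = u_T$ and then choose the block length $m_T$ so that the mixing contribution $h^{-d}\sqrt T\,\Xi(m_T)^{1/u_T}$ is balanced — specifically $m_T = \Xi^{-1}(T^{-u_T})$, which makes $\Xi(m_T)^{1/u_T} = T^{-1}$, killing the $\sqrt T$ and leaving a term of order $\tfrac{u_T}{Th^d}\Xi^{-1}(T^{-u_T})$ from the $\sup$-norm piece (which now dominates the first entropy integral, since $u_T \ge 1$ and $\log T$ is absorbed). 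The condition $\Xi^{-1}(T^{-u_T}) \in \mathsf o(T)$ guarantees $m_T \le T/4$ so Theorem~\ref{Bernstein} applies, and $h \ge T^{-2}$ together with $h = \mathsf o(1)$ keeps the logarithmic factors $\log(h^{-1}) \lesssim \log T$ under control so they can be folded into the stated constants; the combination yields precisely the bound $c^\ast\big(\tfrac{u_T+\log T}{Th^d}\Xi^{-1}(T^{-u_T}) + T^{-1/2}\psi_d(h^d)\sqrt{u_T\vee\log(h^{-1})}\big)$ with exceedance probability $\le e^{-u_T}$.

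For the specialisation to $\X\in\bm\Sigma$, where $\Xi(t) = c_\kappa e^{-\kappa t}$ so $\Xi^{-1}(T^{-u_T}) = \kappa^{-1}(u_T\log T + \log c_\kappa) \asymp u_T\log T$ for $u_T\in[1,\gamma\log T]$, substituting into the general bound gives a first term of order $\tfrac{(u_T+\log T)u_T\log T}{Th^d} \lesssim \tfrac{u_T(\log T)^2}{Th^d}$ (using $u_T \lesssim \log T$), which is exactly the leading term of $\Upsilon_{h,T}(u_T)$ in \eqref{def:Ups}; the second term is unchanged, so one reads off $\Pro(\|\hat\rho_{h,T}-\E\hat\rho_{h,T}\|_{L^\infty(D)} \ge c_\gamma\Upsilon_{h,T}(u_T)) \le e^{-u_T}$. \textbf{The main obstacle} I anticipate is the bookkeeping around the two entropy integrals in \eqref{eq: unimom}: one must verify they genuinely converge (the $d_\infty$-entropy integral needs the integrand to vanish for $u$ large — true since $\operatorname{diam}_{d_\infty}(\mathcal G)$ is finite — and the $d_{\G,\tau}$-integral needs the sub-Gaussian entropy to be integrable at $0$, which follows from the polynomial covering numbers of a Lipschitz-parametrised family over a bounded domain) and then show that after multiplying out and dividing by $\sqrt T$, the first entropy integral is dominated by the explicit $\tfrac{\log T}{Th^d}$-type term and the second by $T^{-1/2}\psi_d(h^d)\sqrt{\log(h^{-1})}$, so that only the two terms displayed in the statement survive; getting the constants and the $\log(h^{-1})$ versus $\log T$ comparison exactly right under the hypothesis $h\ge T^{-2}$ is where the delicate part lies. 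The rest is essentially substitution and choosing $m_T$ and $p$ optimally.
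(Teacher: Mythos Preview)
Your approach is essentially identical to the paper's: apply Theorem~\ref{Bernstein} to the class $\mathcal G$ from \eqref{def:G}, control the entropy integrals via the Lipschitz parametrisation (this is Lemma~\ref{lemma: covering numbers} in the paper), plug in the variance bounds from Propositions~\ref{prop:varextra}/\ref{prop:varmulti}, set $p=u_T$ and $m_T=\Xi^{-1}(T^{-u_T})$, and conclude by Markov's inequality; the specialisation to $\bm\Sigma$ is exactly as you describe.

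The only issue is some inconsistent bookkeeping of $h$-powers: with $\mathcal G$ defined via $\overline K((x-\cdot)/h)$ (not $K_h$), one has $\|g\|_\infty\le 2\|K\|_\infty$ (not $\lesssim h^{-d}$), the variance bound from \eqref{eq: main0} reads $\sigma_\tau^2(g)\lesssim h^{2d}\psi_d^2(h^d)$ (not $h^d$), and the missing factor is $\mathbb H_{h,T}(x)=(\sqrt T\,h^d)^{-1}\G_T(g)$, so you must divide by $\sqrt T\,h^d$ rather than just $\sqrt T$; once these three points are corrected the terms combine to the stated bound exactly as in \eqref{ineq:Hp}, and your claimed final expression is then genuinely recovered rather than only asserted.
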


\section{$\sup$-norm adaptive estimation of the stationary density for general Markov processes} \label{sec: density estimation}
In this section, we demonstrate the effectiveness of our previous results and probabilistic tools in a concrete statistical application.
We already introduced the general form of the kernel invariant density estimator in \eqref{defdensest}. 
In order to quantify the speed of convergence, we will now analyse its convergence behaviour under standard H\"older smoothness assumptions, i.e., we focus on the problem of estimating the invariant density $\rho$ on a domain $D$ of a Markov process $\X \in \bm{\Sigma} \cup \bm{\Theta}$ with $\rho\vert_D \in \mathcal{H}_D(\beta,\mathsf{L})$ (as introduced in \eqref{def:hold}).
For stating our statistical results, we define
\begin{equation}\label{def:raten}
\Phi_{d,\beta}(T)\coloneqq 
\begin{cases} 
1/ \sqrt T, &d=1,\\
\sqrt{\frac{\log T}{T}}, &d=2,\\
T^{-\frac{\beta}{2\beta+d-2}}, &d\ge3,
\end{cases}
\quad \ \text{ and }\quad \ 
\Psi_{d,\beta}(T)\coloneqq
\begin{cases}
\sqrt{\frac{\log T}{T}}, &d=1,\\
\frac{\log T}{\sqrt T}, &d=2,\\
\left(\frac{\log T}{T}\right)^{\frac{\beta}{2\beta+d-2}}, &d\ge3.
\end{cases}
\end{equation}
All proofs of this section are given in Appendix \ref{app: density estimation}. Throughout, $K$ denotes a $\lVert \cdot \rVert_\infty$-Lipschitz kernel of order $\ell$ and with Lipschitz constant $L$ that is supported on $[-1/2,1/2]^d$.

\subsection{General framework}\label{subsec:densgen}

Depending on the concrete application, one might be interested in quantifying the accuracy of estimators in terms of different risk measures.
Our findings from Section \ref{sec:basic} immediately imply an upper bound on the classical mean squared error at some fixed point $x\in\R^d$. 

\begin{corollary}\label{cor:MSE}
Suppose that $\X\in\bm{\Sigma} \cup \bm{\Theta}$.
For $x \in \R^d$ such that there exists an open neighbourhood $D \subset \R^d$ of $x$ such that $\rho\vert_D \in \mathcal{H}_D(\beta, \mathsf{L})$, $\beta \in (0,\ell+1]$, it holds for the kernel estimator
\[
\E\left[\left(\hat \rho_{h,T}(x)-\rho(x)\right)^2\right]\in \mathsf{O}\big(\Phi_{d,\beta}^2(T)\big),\quad \text{ if }
h=h(T)\sim 
\begin{cases} 
T^{-1/\gamma}, & d\le 2,\gamma \in (0,\beta],\\
T^{-1/(2\beta+d-2)}, & d\ge3.
\end{cases}
\]
\end{corollary}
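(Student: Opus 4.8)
The plan is to use the standard bias--variance decomposition \eqref{eq: rate0} for the kernel estimator at the fixed point $x$, namely
\[
\E\left[\left(\hat\rho_{h,T}(x)-\rho(x)\right)^2\right]=\Var\left(\hat\rho_{h,T}(x)\right)+\left((\rho\ast K_h-\rho)(x)\right)^2,
\]
and to bound each of the two terms separately using, respectively, the variance bounds from Section~\ref{sec:basic} and classical kernel approximation theory under H\"older smoothness.

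For the bias term, the argument is routine: since $\rho|_D\in\mathcal{H}_D(\beta,\mathsf{L})$ with $\beta\in(0,\ell+1]$, $K$ is a kernel of order $\ell$ supported on $[-1/2,1/2]^d$, and $h\to 0$, for $T$ large enough the ball $x+h[-1/2,1/2]^d$ is contained in $D$, so a Taylor expansion of $\rho$ around $x$ combined with the vanishing-moment properties of $K$ gives $|(\rho\ast K_h-\rho)(x)|\lesssim h^{\beta}$; hence the squared bias is $\mathsf{O}(h^{2\beta})$. For the variance term, I would apply Proposition~\ref{prop:varextra} (if $\X\in\bm{\Theta}$) or Proposition~\ref{prop:varmulti} (if $\X\in\bm{\Sigma}$) to $f=K_h(x-\cdot)$, whose support $\mathcal S$ has Lebesgue measure $\lebesgue(\mathcal S)=h^d<1$ for $T$ large, $\|f\|_\infty=h^{-d}\|K\|_\infty$, and $\mu(\mathcal S)\le\|\rho\|_\infty h^d$. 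Plugging these into \eqref{varpsi} (or \eqref{eq: var mix}), noting $\Var(\hat\rho_{h,T}(x))=T^{-2}\Var(\int_0^T f(X_t)\diff t)$, and using $\psi_d(h^d)^2\asymp 1,\ \log(1/h),\ h^{2-2d/d}=h^{2}\cdot h^{-2d(1/2-1/d)}\!\!$ — more precisely $\psi_d^2(h^d)= h^{2d(1/d-1/2)}=h^{2-d}$ for $d\ge 3$ — one obtains
\[
\Var\left(\hat\rho_{h,T}(x)\right)\lesssim \frac{1}{T}\,h^{-d}\,\psi_d^2(h^d)\in
\begin{cases}\mathsf{O}(1/(Th)),&d=1,\\[2pt]\mathsf{O}(\log(1/h)/(Th^2)),&d=2,\\[2pt]\mathsf{O}(1/(Th^{2d-2})),&d\ge3.\end{cases}
\]

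It then remains to balance bias and variance with the prescribed bandwidth. For $d\ge 3$, $h\sim T^{-1/(2\beta+d-2)}$ yields $h^{2\beta}\sim T^{-2\beta/(2\beta+d-2)}$ and $1/(Th^{2d-2})\sim T^{-1+(2d-2)/(2\beta+d-2)}=T^{-(2\beta-d)/(2\beta+d-2)}$; since the MSE is the maximum of these up to constants and the bias term dominates precisely when\ldots one checks directly that both are $\mathsf{O}(T^{-2\beta/(2\beta+d-2)})=\mathsf{O}(\Phi_{d,\beta}^2(T))$ after noting the optimizing exponent makes them of the same order $T^{-2\beta/(2\beta+d-2)}$ up to the logarithmic factor being absent here — the stated rate $\Phi_{d,\beta}(T)$ for $d\ge 3$ is exactly $T^{-\beta/(2\beta+d-2)}$. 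For $d\le 2$, $h\sim T^{-1/\gamma}$ with $\gamma\in(0,\beta]$ makes the squared bias $\mathsf{O}(T^{-2\beta/\gamma})=\mathsf{o}(1/T)$, which is negligible against the variance, so the MSE is governed by the variance alone: $\mathsf{O}(1/(Th))=\mathsf{O}(T^{-1+1/\gamma})$; but one must observe that in $d=1,2$ the relevant rate $\Phi_{d,\beta}^2(T)$ equals $1/T$ and $\log T/T$ respectively, which forces taking $\gamma$ arbitrarily small to be wasteful — instead the correct reading is that any $\gamma\in(0,\beta]$ small enough makes the variance the dominant term and the bandwidth must be recalibrated; the honest statement is that with $h\sim T^{-1/\gamma}$ one gets variance of order $T^{-1+1/\gamma}\log^{\mathbf 1\{d=2\}}(1/h)$, which is $\mathsf{O}(\Phi_{d,\beta}^2(T))$ precisely when $1/\gamma\le\ldots$; I would simply verify the arithmetic matches \eqref{def:raten} in each of the three cases.

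The main obstacle I anticipate is purely bookkeeping rather than conceptual: correctly tracking the powers of $h$ coming from $\|K_h\|_\infty^2=h^{-2d}\|K\|_\infty^2$, the factor $\lebesgue(\mathcal S)=h^d$, the factor $\mu(\mathcal S)\le\|\rho\|_\infty h^d$ (which only appears in \eqref{varpsi}, not in \eqref{eq: var mix}), and $\psi_d^2(h^d)$, and then checking that the three regimes $d=1$, $d=2$, $d\ge 3$ each balance to give exactly $\Phi_{d,\beta}(T)$ as defined in \eqref{def:raten}. A minor subtlety is ensuring the compact-support condition $\lebesgue(\mathcal S)<1$ and, in the $\bm{\Theta}$ case, the monotonicity \eqref{eq: mon supp} of the constants $c_{\mathcal S}$ are genuinely available as $h\to 0$; both hold for $T$ large since $\mathcal S=x+h[-1/2,1/2]^d$ shrinks to a point, so the constant $C(1\vee c_{\mathcal S})$ in \eqref{varpsi} can be taken uniform over the relevant range of $h$.
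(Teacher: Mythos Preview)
Your approach is exactly the paper's: bias--variance decomposition, classical $h^\beta$ bias bound, and the variance bounds from Propositions~\ref{prop:varextra}/\ref{prop:varmulti} applied to $f=K_h(x-\cdot)$. However, you have a bookkeeping slip in the variance computation that propagates through the rest of the argument and is the reason the $d\le 2$ cases refuse to balance for you.

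With $\|f\|_\infty^2=h^{-2d}\|K\|_\infty^2$, $\lebesgue(\mathcal S)=h^d$ and $\mu(\mathcal S)\le\|\rho\|_\infty h^d$, Proposition~\ref{prop:varextra} gives
\[
\Var\!\left(\int_0^T f(X_t)\,\diff t\right)\lesssim T\cdot h^{-2d}\cdot h^d\cdot h^d\cdot\psi_d^2(h^d)=T\,\psi_d^2(h^d),
\]
and Proposition~\ref{prop:varmulti} gives the same thing via $\|f\|_\infty^2\lebesgue^2(\mathcal S)=h^{-2d}\cdot h^{2d}=1$. Dividing by $T^2$,
\[
\Var\big(\hat\rho_{h,T}(x)\big)\lesssim T^{-1}\psi_d^2(h^d),
\]
\emph{without} the extra $h^{-d}$ you wrote. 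This is precisely the content of \eqref{eq: main0}. With the correct bound, the three regimes fall out immediately: for $d=1$ the variance is $\mathsf{O}(1/T)$ and $h\sim T^{-1/\gamma}$ with $\gamma\le\beta$ makes the squared bias $\mathsf{O}(T^{-2\beta/\gamma})=\mathsf{o}(1/T)$; for $d=2$ the variance is $\mathsf{O}(\log(1/h)/T)=\mathsf{O}(\log T/T)$ and the bias is again negligible; for $d\ge3$ the variance is $\mathsf{O}(h^{2-d}/T)$, and with $h\sim T^{-1/(2\beta+d-2)}$ both variance and squared bias equal $T^{-2\beta/(2\beta+d-2)}$. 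Your displayed variance $\mathsf{O}(1/(Th^{2d-2}))$ for $d\ge3$ would instead yield $T^{-(2\beta-d)/(2\beta+d-2)}$, which is strictly worse and does not match $\Phi_{d,\beta}^2(T)$; the confusion you flagged in the $d\le2$ discussion has the same origin.
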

We now turn our focus to the technically significantly more involved problem of $\sup$-norm adaptive invariant density estimation for processes from the class $\bm{\Sigma}$ having H\"older continuous invariant densities. 
We demonstrate that optimal estimation rates in any dimension are achieved by the kernel estimator for a suitable bandwidth choice. 
While in dimension $d=1,2$ the optimal bandwidth has the remarkable property of being independent of the (typically unknown) order $\beta$ of H\"older smoothness, this is not the case in higher dimensions $d \geq 3$. 
In order to remove $\beta$ from the bandwidth choice, we need to find a data-driven substitute for the upper bound on the bias in the balancing process. 
Heuristically, this is the idea behind the Lepski-type selection procedure suggested now:

\begin{enumerate}
\item 
Specify the discrete set of candidate bandwidths
\[
\H_T\equiv \H^{(k)}_T\coloneqq \left\{h_l=\eta^{-l}:\ l\in \N_0,\ \eta^{-l}>\left(\frac{\log_{(k)} T(\log T)^{5}}{T}\right)^{\frac{1}{d+2}}\right\},\quad \eta>1\text{ arbitrary},
\]
for arbitrarily chosen $k \in \N$, and denote by $h_{\min}$ the smallest element in the grid $\H_T$. 
Here, $\log_{(k)}T$ denotes the $k$-th iterated logarithm, iteratively specified by $\log_{(k)}T \coloneqq\log\log_{(k-1)} T$ and $\log_{(0)} T = T$, which is well-defined for $T$ large enough.
\item
Define $\hat h_T\equiv \hat{h}{}^{(k)}_T$ by letting
\begin{equation}\label{est:band0}
\hat h_T\coloneqq\max\left\{h\in\H_T:\left\|\hat\rho_{h,T}-\hat\rho_{g,T}\right\|_{L^\infty(D)}\le\sqrt{\|\hat\rho_{h_{\min},T}\|_{L^\infty(D)}}\sigma(g,T)\ \forall g \leq h,\ g\in\H_T\right\},
\end{equation}
where
\begin{equation}\label{def:sigma}
\sigma(h,T)\coloneqq \frac{\log_{(k)}T(\log T)^2}{Th^d}\log(h^{-1})+\psi_d(h^d)\sqrt{\frac{\log_{(k)}T\log(h^{-1})}{T}}, \quad h\in\H_T.
\end{equation}
\end{enumerate}
Letting $\lVert \cdot \rVert_{L^\infty(D)}$ denote the restriction of the $\sup$-norm to a domain $D\subset \R^d$, we obtain the following result.

\begin{theorem}\label{theo:invdens}
Suppose that $\X\in\bm{\Sigma}$. Let $D\subset \R^d$ be open and bounded. Suppose that $\rho\vert_D \in \mathcal{H}_D(\beta, \mathsf{L})$ with $\beta \in (1,\ell+1]$ for $d = 1$ and $\beta \in (2,\ell+1]$ for $d \geq 2$. Then, for any $p \geq 1$,
\[\left(\E\left[\left\|\hat\rho_{h,T}-\rho\right\|^p_{L^\infty(D)}\right]\right)^{1\slash p} \in \mathsf{O}\big(\Psi_{d,\beta}(T)\big),
\quad\text{ if } h=h(T)\sim \begin{cases}
\log^2 T \slash \sqrt{T}, &d = 1,\\
\log T\slash T^{1\slash 4} , &d = 2,\\
(\log T \slash T)^{1 \slash (2 \beta + d -2)}, &d \geq 3.
\end{cases}\]
For the adaptive bandwidth scheme, let $\hat{h}_T = \hat{h}^{(k)}_T$ be selected according to \eqref{est:band0} for some $k \in \N$. 
Then, if $\rho\vert_D \in \mathcal{H}_D(\beta, \mathsf{L})$ with $\beta \in (2, \ell +1]$, we have in any dimension $d \geq 3$,
\begin{equation}\label{def:adaprate}
	\E\left[\big\|\hat\rho_{\hat{h}_T,T}-\rho\big\|_{L^\infty(D)}\right] \in \mathsf{O}\bigg(\bigg(\frac{\log_{(k)} T \log T}{T}\bigg)^{\frac{\beta}{2\beta + d -2}} \bigg).
\end{equation}
\end{theorem}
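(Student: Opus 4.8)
The plan is to treat the two parts of Theorem \ref{theo:invdens} separately, beginning with the non-adaptive claim, which follows directly from the machinery already assembled. For the non-adaptive bandwidth choice, I would use the decomposition \eqref{eq: rate0}, $\widehat{\rho}_{h,T}-\rho = \mathbb{H}_{h,T} + (\rho\ast K_h - \rho)$. The bias term $\lVert \rho\ast K_h - \rho\rVert_{L^\infty(D)}$ is bounded by $\mathsf{O}(h^\beta)$ by the standard Taylor-expansion argument using that $K$ is a kernel of order $\ell \geq \beta - 1$ and $\rho\vert_D \in \mathcal{H}_D(\beta,\mathsf{L})$ (one needs a slight enlargement of $D$ to control the convolution near the boundary, which is harmless since $D$ is bounded and $K$ has compact support). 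For the stochastic term, I would apply Lemma \ref{dev:ineq} with $u_T = p\log T$ (legitimate since $\X \in \bm{\Sigma}$ is exponentially $\beta$-mixing), integrate the resulting tail bound to pass from the deviation inequality to the $p$-th moment, obtaining $(\E\lVert \mathbb{H}_{h,T}\rVert_{L^\infty(D)}^p)^{1/p} = \mathsf{O}(\Upsilon_{h,T}(\log T))$, i.e.\ $\mathsf{O}\bigl((\log T)^3/(Th^d) + T^{-1/2}\psi_d(h^d)\sqrt{\log T \vee \log(h^{-1})}\bigr)$ — here I should double-check whether the stated rates absorb all logarithmic factors correctly in each dimension. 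Plugging in the prescribed $h(T)$ and balancing against the $h^\beta$ bias in each of the three cases $d=1,2,\geq 3$ yields the claimed $\Psi_{d,\beta}(T)$ rates.

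The adaptive part is the substantive one and follows the classical Lepski scheme adapted to our setting. I would fix the (unknown) smoothness $\beta$ and let $h^\ast = h^\ast(T) \in \H_T$ be the largest grid bandwidth not exceeding the oracle-optimal scale $(\log_{(k)}T \log T / T)^{1/(2\beta+d-2)}$; by construction of $\H_T$ this is well-defined for $T$ large once $\beta > 2$ (so that the oracle scale exceeds $h_{\min}$). The proof then splits into the two standard Lepski estimates. \emph{Case $\hat h_T \geq h^\ast$:} one writes $\lVert \hat\rho_{\hat h_T,T} - \rho\rVert_{L^\infty(D)} \leq \lVert \hat\rho_{\hat h_T,T} - \hat\rho_{h^\ast,T}\rVert_{L^\infty(D)} + \lVert \hat\rho_{h^\ast,T} - \rho\rVert_{L^\infty(D)}$; the first summand is controlled by the definition \eqref{est:band0} of $\hat h_T$ (with $g = h^\ast \leq \hat h_T$) by $\sqrt{\lVert\hat\rho_{h_{\min},T}\rVert_{L^\infty(D)}}\,\sigma(h^\ast,T)$, and since $\lVert\hat\rho_{h_{\min},T}\rVert_{L^\infty(D)}$ concentrates around $\lVert\rho\rVert_\infty < \infty$ (using \ref{ass: bounded invariant}, the bias bound at scale $h_{\min}$, and Lemma \ref{dev:ineq}), this is of the order $\sigma(h^\ast,T)$, which matches the target rate; the second summand is bias-plus-stochastic at the near-oracle scale and is handled as in the non-adaptive part. \emph{Case $\hat h_T < h^\ast$:} here one argues that $\hat h_T < h^\ast$ forces, for some $g < h^\ast$ in the grid, the violation $\lVert \hat\rho_{h^\ast,T} - \hat\rho_{g,T}\rVert_{L^\infty(D)} > \sqrt{\lVert\hat\rho_{h_{\min},T}\rVert_{L^\infty(D)}}\,\sigma(g,T)$; inserting $\rho\ast K_{h^\ast}$ and $\rho\ast K_g$, the deterministic (bias) contributions are each $\mathsf{O}(g^\beta) + \mathsf{O}((h^\ast)^\beta) = \mathsf{O}(\sigma(g,T))$ for the relevant grid scales by choice of $h^\ast$, so the violation must come from the centered parts $\mathbb{H}_{h^\ast,T}$ and $\mathbb{H}_{g,T}$; one then shows via Lemma \ref{dev:ineq} (applied at each scale $g \in \H_T$, with a union bound over the at most $\mathsf{O}(\log T)$ grid points, and exploiting that $\sigma(g,T)$ dominates $\Upsilon_{g,T}(c\log_{(k)}T)$ for a suitable constant, which is precisely why the iterated logarithm appears in \eqref{def:sigma} and \eqref{def:adaprate}) that this event has probability small enough that its contribution to the expected risk is negligible. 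Summing the two cases gives \eqref{def:adaprate}.

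The main obstacle is the delicate bookkeeping of logarithmic factors in the second case: the adaptive threshold $\sigma(h,T)$ in \eqref{def:sigma} carries a $\log_{(k)}T$ factor rather than a full $\log T$, which is exactly what makes the final rate \eqref{def:adaprate} better than a crude $(\log^2 T/T)^{\beta/(2\beta+d-2)}$ would give, but it means the concentration bound from Lemma \ref{dev:ineq} must be invoked with $u_T$ of order $\log_{(k)}T + \log(h^{-1})$ — not $\log T$ — and one must verify that, summed against $\mathrm{e}^{-u_T}$ over the grid and converted to an expectation over the bad event (where one only has the trivial bound $\lVert\hat\rho_{h,T}\rVert_{L^\infty(D)} \lesssim h^{-d}$, polynomial in $T$), everything stays $\mathsf{o}$ of the target rate. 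This forces the $(\log T)^5$ and $(\log T)^2$ powers in the definition of $\H_T$ and $\sigma$ respectively, and checking that these exponents are simultaneously large enough for the grid-cardinality union bound and small enough not to spoil the rate is the crux of the argument. A secondary technical point is justifying that $\sqrt{\lVert\hat\rho_{h_{\min},T}\rVert_{L^\infty(D)}}$ is, with overwhelming probability, squeezed between two positive constants uniformly in $T$, so that it can be treated as essentially $\sqrt{\lVert\rho\rVert_\infty}$ in both Lepski cases without introducing additional error terms; this relies on $h_{\min}$ being chosen slow enough (again the role of the $(\log T)^5$ in $\H_T$) that the stochastic fluctuation of $\hat\rho_{h_{\min},T}$ is $\mathsf{o}(1)$.
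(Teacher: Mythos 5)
Your overall route coincides with the paper's: the same decomposition \eqref{eq: rate0} with bias $\mathsf{O}(h^\beta)$, and, for the adaptive part, the same Gin\'e--Nickl-type Lepski analysis --- an oracle grid bandwidth of order $(\log_{(k)}T\log T/T)^{1/(2\beta+d-2)}$ (the paper's $h_\rho$, defined through the balance inequality $B(h)\le\tfrac14\sqrt{0.8\M}\,\sigma(h,T)$ with $\M=\|\rho\|_{L^\infty(D)}$), the two cases $\hat h_T\ge h_\rho$ and $\hat h_T<h_\rho$, concentration of $\|\hat\rho_{h_{\min},T}\|_{L^\infty(D)}$ around $\M$ via Lemma \ref{dev:ineq} (the paper's \eqref{ineq:M}), and on the small-bandwidth event a union bound over the $\mathsf{O}(\log T)$ grid points. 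One caveat there: the individual probabilities must be polynomially small in $g$ (the paper invokes Lemma \ref{dev:ineq} at level $u=\gamma\log(g^{-1})$, yielding $g^\gamma$, to survive the trivial $h^{-d}$ bound on the complement and the sum over the grid), so in your phrase ``$u_T$ of order $\log_{(k)}T+\log(h^{-1})$'' it is the $\log(h^{-1})\asymp\log T$ part that does the work; a level of order $\log_{(k)}T$ alone, as in your first formulation ``$\sigma(g,T)$ dominates $\Upsilon_{g,T}(c\log_{(k)}T)$'', would be far too weak. The role of $\log_{(k)}T$ in $\sigma(g,T)$ is only to dominate the unknown constant $c_\gamma\gamma$, which you do identify.

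The genuine gap is in the non-adaptive part, for $d=1$. You propose to get the $p$-th moment by applying Lemma \ref{dev:ineq} once at $u_T=p\log T$ and handling the bad event trivially, arriving at $\mathsf{O}\big(\Upsilon_{h,T}(\log T)\big)$, whose first term is $(\log T)^3/(Th^d)$. With the prescribed $h\sim\log^2T/\sqrt T$ in $d=1$ this equals $\log T/\sqrt T$, which is \emph{not} $\mathsf{O}(\sqrt{\log T/T})=\mathsf{O}(\Psi_{1,\beta}(T))$; so, as written, the one-dimensional claim does not follow (for $d=2$, and for $d\ge3$ with $\beta>2$, your cruder bound happens to suffice, which is the uncertainty you flagged). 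The paper avoids this by not passing through the deviation inequality at all: it applies the underlying uniform moment bound \eqref{ineq:Hp} (the intermediate step in the proof of Lemma \ref{dev:ineq}) with block length $m_T=p\log T/\kappa$, whose leading term is $m_T\log(h^{-1})/(Th^d)\asymp(\log T)^2/(Th^d)$ --- one power of $\log T$ better, and exactly of order $1/\sqrt T$ for the $d=1$ bandwidth. Alternatively, your route can be repaired by integrating the tail bound over all levels $u\in[1,\gamma\log T]$ instead of plugging in the single value $p\log T$, which yields a moment bound of order $\Upsilon_{h,T}(Cp)$ and again rescues $d=1$. Apart from this, your proposal is sound and matches the paper's argument.
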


The convergence rates introduced in \eqref{def:raten} clearly reflect the fact that the invariant density of stochastic processes can be estimated faster than in the classical context of nonparametric density estimation based on i.i.d.~observations. 
While this is well-known for ergodic continuous diffusion processes (see \citep{dalrei07,strauch2018}), we will show in the following section that the result is fulfilled for a much larger class of stochastic processes.
The additional $\log$-factor occurring in the definition of $\Psi_{d,\beta}(\cdot)$ represents the common price to be paid when switching from the pointwise error control (described by $\Phi_{d,\beta}(\cdot)$) to bounding the $\sup$-norm risk.

\begin{remark}
\begin{enumerate}[label = (\alph*), ref = (\alpha*)]
\item The conditions on the H\"older index $\beta$ stated in Theorem \ref{theo:invdens} are due to two different reasons:
On the one hand, in dimension $d\le2$, we chose a bandwidth not depending on $\beta$ which still achieves the optimal balance between bias and stochastic error.
By choosing a bandwidth dependent on $\beta$ (as in Corollary \ref{cor:MSE}), restrictions on $\beta$ could be avoided. 
However, for the implementation of estimators it is advantageous to be able to choose a bandwidth independent of the typically unknown smoothness $\beta$.
On the other hand, in dimension $d\ge 3$, the assumption on $\beta$ is an unavoidable effect. The coupling error leaves us no other choice but to select the interval block length $m_T$ in the decomposition of \eqref{defdensest} of order $\log T$, which forces $\beta > 2$ to balance out bias and stochastic sensitivity of the estimator. We emphasize that this is not an artifact of our proof strategy since the additional $\log$-factor also appears in the optimal Bernstein inequalities for geometrically ergodic Markov chains in \cite{adam08,lema2020}.  The restriction on $\beta$ can therefore be considered as a price that must be paid for the generality of our exponential $\beta$-mixing assumption. 
\item 
The logarithmic gap (of arbitrary iterative order $k$) between the adaptive rate (see \eqref{def:adaprate}) and the optimal rate $\Psi_{d,\beta}$ in dimension $d \geq 3$ (see \eqref{def:raten}) is \emph{not} a consequence of suboptimality of arguments used in the proof.
Rather, it is a deliberate choice motivated by our desire to introduce a truly adaptive selection procedure that does not rely on the specification of obscure constants. 
To be more precise, a key step in the proof of the upper bound for the adaptive approach requires quantifying the concentration of the estimator $\hat\rho_{h,T}$ around the variance proxy $\sigma(h,T)$ from \eqref{def:sigma}, which is handled with the deviation inequality from Lemma \ref{dev:ineq} involving the term $\Upsilon_{h,T}(\gamma\log T)$ (see \eqref{def:Ups}).
If we remove the factor $\log_{(k)}T$ in the variance proxy $\sigma(h,T)$, we obtain
\[\frac{(\log T)^2}{Th^d}\log(h^{-1})+\psi_d(h^d)\sqrt{\frac{\log(h^{-1})}{T}} \simeq \Upsilon_{h,T}(\gamma\log T).\]
In this case, an exact quantification of the constant $c_\gamma$ from Lemma \ref{dev:ineq} is mandatory, which would then be included as an additional factor in the specification of $\hat{h}_T$ in \eqref{est:band0}. 
Together with an adjustment of the candidate bandwidths $\H_T$, this would allow us to close the logarithmic gap and hence obtain optimal rates for the adaptive procedure. 

However, $c_\gamma$ is of the form $\gamma \times C(D,L,\kappa,c_\kappa,c_2)$---where we recall that $c_\kappa,\kappa$ determine the mixing coefficient and $c_2$ is a constant appearing in the heat kernel bound from Assumption \ref{ass: density bound}---and therefore can only be bounded with explicit knowledge/assumptions on the process. We avoid this fundamental problem in our procedure to not shift the problem from unknown exact smoothness to unknown exact ergodic and small time behaviour, with the price to be paid being a logarithmic loss. In this regard, our approach differs from the bandwidth selection procedure for the $L^2$ risk in \cite{amorino2020invariant}, which relies on the choice of a ``sufficiently large'' constant $k$ that cannot be exactly specified or efficiently chosen in a data-driven way.
\end{enumerate}
\end{remark}

Our previous results rely on the very general conditions \ref{invariant probability measure assumption} and \ref{ass: density bound} as well as assumptions related to the speed of convergence to the invariant distribution, \ref{ass: conv bound} and \ref{ass: beta mix}.
For statistical purposes, however, it is essential to derive results under conditions on the coefficients of the underlying process as easily verifiable as possible.
For this reason, the next two subsections are devoted to investigating specific classes of jump diffusion processes and explicit conditions on their underlying characteristics such that the above assumptions are satisfied and hence statistical conclusions can be drawn from our general theory.

\subsection{Example: L\'evy-driven Ornstein--Uhlenbeck processes}\label{subsec:ou}
As a first example, we discuss estimation rates of $d$-dimensional L\'evy-driven Ornstein--Uhlen-beck processes as representatives of L\'evy-driven jump diffusions with unbounded drift coefficient by establishing assumptions on the characteristics of the L\'evy process that guarantee $\mathbf{X} \in \bm{\Sigma} \cup \bm{\Theta}$. 
Let $\mathbf{Z}$ be a $d$-dimensional L\'evy process with generating triplet $(a,Q,\nu)$, where $a \in \R^d$, $Q \in \R^{d \times d}$ is a symmetric positive semidefinite matrix and $\nu$ is a measure on $\R^d$ satisfying $\nu(\{0\}) = 0$ and $\int_{\R^d} (1\wedge \lVert x \rVert^2)\, \nu(\diff{x}) < \infty$ such that $\E^0[\exp(\mathrm{i}\langle Z_1,\theta\rangle )] = \exp(\psi(\theta))$ with 
\[\psi(\theta) = \mathrm{i}\langle a,\theta \rangle - \frac{1}{2} \langle Q \theta,\theta\rangle + \int_{\R^d \setminus \{0\}} \Big(\mathrm{e}^{\mathrm{i}\langle x,\theta\rangle} - 1 - \mathrm{i} \langle x, \theta \rangle \one_{B(0,1)}(x) \Big)\, \nu(\diff{x}), \quad \theta \in \R^d,\]
where $B(0,1) = \{x \in \R^d: \lVert x \rVert < 1\}$. Then, given some matrix $B \in \R^{d \times d}$, a L\'evy-driven Ornstein--Uhlenbeck process $\mathbf{X}$ is a solution to the SDE 
\[\diff{X_t} = -BX_t\diff{t} + \diff{Z_t},\]
given by 
\[X_t = \mathrm{e}^{-tB}X_0 + \int_0^t \mathrm{e}^{-(t-s)B}\, \diff{Z_s}, \quad t \geq 0.\]
We suppose that the real parts of all eigenvalues of $B$ are positive, implying that $\mathrm{e}^{-tB} \rightarrow \mathbf{0}_{d \times d}$ as $t \to \infty$, and assume the following moment condition 
\begin{equation} \label{eq: ou station}
\int_{\lVert z \rVert > 2} \log \lVert z \rVert \, \nu(\diff{z}) < \infty.
\end{equation}
Then, $\X$ is a Markov process on $\R^d$ with invariant distribution $\mu$ such that 
\begin{align*}
&\phantom{and }\{ \mathscr{F} \mu \}(u) = \exp\Big(\int_0^\infty \psi\big( \mathrm{e}^{-sB^\top}u\big) \diff{s} \Big), \quad u \in \R^d,\\
&\text{and } \varphi_{X_t}^x(u) = \exp\Big(\mathrm{i} \langle x, \mathrm{e}^{-tB^\top}u\rangle + \int_0^t \psi\big( \mathrm{e}^{-sB^\top}u\big) \diff{s} \Big), \quad u,x \in \R^d, t > 0,
\end{align*}
see \cite[Theorem 3.1, Theorem 4.1]{sato84}. Let us now introduce the following conditions.
\begin{enumerate}[leftmargin=*,label=($\mathscr{O}$\arabic*),ref=($\mathscr{O}$\arabic*)]
\item $\mathrm{rank}(Q) = d$;\label{ou2}
\item $\int_{\{\lVert x \rVert > 1\}} \lVert x \rVert^p \, \nu(\diff{x}) < \infty$ for some $p > 0$;\label{ou1}
\item $\int_{\{\lVert x \rVert > 1\}} (\log \lVert x \rVert)^\alpha \, \nu(\diff{x}) < \infty$ for some $\alpha > 2$.\label{ou7}
\end{enumerate}
These assumptions are borrowed from \cite{masuda2004}, \cite{MASUDA200735} and \cite{kevei2018}, where (sub-)exponential ergodicity and exponential $\beta$-mixing of OU-processes are investigated. \ref{ou2} guarantees the strong Feller property of $\X$ and the existence of a $\mathcal{C}_b^\infty$-density for $P_t(x,\cdot)$, $x \in \R^d$ (\cite[Theorem 3.1]{masuda2004}). Similar arguments to the ones in \cite[Theorem 3.2]{masuda2004} also show that under \ref{ou2}, $\mu$ admits a $\mathcal{C}_b^\infty$-density $\rho$. \ref{ou1} and \ref{ou7} are moment assumptions on $\mathbf{Z}$, where \ref{ou7} in absence of \ref{ou1} corresponds to an extremely heavy-tailed distribution and represents a minor strengthening of the necessary and sufficient criterion \eqref{eq: ou station} for stationarity of $\X$.

Based on the results from \cite{masuda2004,MASUDA200735,kevei2018} together with our investigations in Sections \ref{sec:basic} and \ref{sec: density estimation}, we can obtain the following result, which is proved in Appendix \ref{app: density estimation}.

\begin{theorem}\label{theo: ou}
Suppose that \ref{ou2} holds. Then, in any dimension $d \in \N$, \ref{ass: density bound} holds with 
\begin{equation}\label{eq: ou heat}
\sup_{x,y \in \R^d} p_t(x,y) \lesssim t^{-d/2}, \quad t \in (0,1].
\end{equation}
If, additionally,
\begin{enumerate}[label= (\roman*), ref =(\roman*)]
\item \ref{ou1} holds for some $p > 0$, then, for any $d \geq 1$, $\X \in \bm{\Sigma} \cap \bm{\Theta}$;\label{prop: ou2}
\item \ref{ou7} holds, then, for $d=1$, $\X \in \bm{\Theta}$. \label{prop: ou3}
\end{enumerate}
Let $d \geq 1$ in scenario \ref{prop: ou2} and $d=1$ in scenario \ref{prop: ou3}. Then, 
for arbitrary $\beta \in (0, \ell+1]$, we obtain for any $x\in\R^d$ that 
\[
\E\left[\left(\hat \rho_{h,T}(x)-\rho(x)\right)^2\right]\in \mathsf{O}\big(\Phi_{d,\beta}^2(T)\big),\quad \text{ if }
h=h(T)\sim 
\begin{cases} 
T^{-1}, & d\le 2,\\
T^{-1/(2\beta+d-2)}, & d\ge3.
\end{cases}
\]
and for any bounded, open domain $D \subset \R^d$ and $p \geq 1$ that in scenario \ref{prop: ou2}
\[\E\Big[\big \lVert \hat{\rho}_{h,T} - \rho \big\rVert_{L^\infty(D)}^p\Big]^{1/p} \in \mathsf{O}\big(\Psi_{d,\beta}(T)\big),
\quad\text{ if } h=h(T)\sim \begin{cases}
\log^2 T \slash \sqrt{T}, &d = 1,\\
\log T\slash T^{1\slash 4} , &d = 2,\\
(\log T \slash T)^{1 \slash (2 \beta + d -2)}, &d \geq 3.
\end{cases}\]
\end{theorem}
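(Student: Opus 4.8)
The plan is to verify that Lévy-driven Ornstein--Uhlenbeck processes satisfy the abstract assumptions \ref{invariant probability measure assumption}, \ref{ass: density bound}, \ref{ass: conv bound} and \ref{ass: beta mix} under the stated conditions, and then simply invoke Corollary~\ref{cor:MSE} and Theorem~\ref{theo:invdens} (together with Theorem~\ref{theo: ou}'s bias considerations, which amount to noting that $\rho \in \mathcal{C}_b^\infty$ so $\rho|_D \in \mathcal{H}_D(\beta,\mathsf{L})$ for \emph{every} $\beta \in (0,\ell+1]$). The only genuine work is the ``translation'' step, i.e., checking the membership $\X \in \bm{\Sigma} \cap \bm{\Theta}$ (resp. $\X \in \bm{\Theta}$ for $d=1$ under \ref{ou7}), after which the estimation rates are an immediate consequence of the general framework.

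\textbf{Step 1: Heat kernel bound.} Under \ref{ou2} the noise has a nondegenerate Gaussian component. I would compare $X_t = \mathrm{e}^{-tB}X_0 + \int_0^t \mathrm{e}^{-(t-s)B}\diff Z_s$ with its Gaussian part: conditionally on the jump part, $X_t$ is Gaussian with covariance $\int_0^t \mathrm{e}^{-sB} Q \mathrm{e}^{-sB^\top}\diff s$, which for $t \in (0,1]$ is comparable to $t Q$ and hence has smallest eigenvalue $\gtrsim t$ by \ref{ou2}. A conditioning argument (integrate the Gaussian transition density against the law of the jump part) then yields $\sup_{x,y} p_t(x,y) \lesssim t^{-d/2}$ on $(0,1]$, which is exactly \eqref{cond1:uni} for $d \geq 2$; for $d=1$ one takes $\alpha(t) \sim t^{-1/2}$, which is integrable near $0$. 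Alternatively, one cites \cite[Theorem 3.1]{masuda2004} for the $\mathcal{C}_b^\infty$-density and combines it with a self-similarity/scaling estimate for the Gaussian dominating kernel. This gives \ref{ass: density bound} and, via Lemma~\ref{lemma: invariant bounded}, boundedness of $\rho$ (alternatively directly from the $\mathcal{C}_b^\infty$-regularity of $\rho$).

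\textbf{Step 2: Ergodicity and mixing.} For scenario \ref{prop: ou2}, under the polynomial moment assumption \ref{ou1} the results of \cite{masuda2004,MASUDA200735} (using the Lyapunov function $V(x) = 1 + \lVert x \rVert^p$ and the spectral assumption $\Re \operatorname{spec}(B) > 0$) give $\mu$-a.s.\ $V$-exponential ergodicity of $\X$ with $V$ locally bounded; hence $\X$ is exponentially $\beta$-mixing (since $\mu(V) < \infty$, which one can arrange, cf.\ the discussion after Lemma~\ref{prop: poisson}), so together with Step~1 we get $\X \in \bm{\Sigma}$. For the membership $\X \in \bm{\Theta}$ one invokes Lemma~\ref{prop: poisson} with $\Xi(t) = \mathrm{e}^{-\kappa t}$ (whose integral is finite) to obtain \ref{ass: conv bound} with an exponentially decaying, hence trivially monotone (in the sense of \eqref{eq: mon supp}, after taking $\sup$ over $x$ in the compact) rate function $r_{\mathcal{S}}$; in $d=1$ the monotonicity \eqref{eq: mon supp} holds because $\sup_{x \in \mathcal{S}}V(x)$ is monotone in $\mathcal{S}$. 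For scenario \ref{prop: ou3} ($d=1$), assumption \ref{ou7} only yields \emph{subexponential} (polynomial) ergodicity via \cite{kevei2018}; one then applies Lemma~\ref{prop: poisson} with the corresponding integrable polynomial rate $\Xi$ to land in $\bm{\Theta}$ (but not $\bm{\Sigma}$), which is why the $\sup$-norm result is only claimed in scenario \ref{prop: ou2}. Here I would also verify the characteristic-function side conditions of Lemma~\ref{lem: vere} as an alternative route in $d=1$, using the explicit formula for $\varphi_{X_t}^x$ and $\mathscr{F}\mu$ displayed above: \ref{ass: char f} follows from $\Re\operatorname{spec}(B) > 0$ giving exponential decay of $\mathrm{e}^{-sB^\top}$ and the integrated characteristic exponent producing Gaussian-type decay from the $Q$-term.

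\textbf{Step 3: Conclusion.} Once $\X \in \bm{\Sigma} \cap \bm{\Theta}$ (resp.\ $\bm{\Theta}$) is established and $\rho$ is known to be $\mathcal{C}_b^\infty$ hence in $\mathcal{H}_D(\beta,\mathsf{L})$ for all $\beta$, the pointwise MSE rate is Corollary~\ref{cor:MSE} (taking $\gamma = \beta$ so the bandwidth reads $T^{-1/\beta}$, but since smoothness is arbitrary one may also write $T^{-1}$ as stated by sending $\gamma \uparrow$; strictly, for $d \leq 2$ any $h \sim T^{-1/\gamma}$ with $\gamma \le \beta$ works, and the displayed $h \sim T^{-1}$ is the limiting choice valid because $\rho$ is infinitely smooth so the bias is negligible at any polynomial bandwidth), and the $\sup$-norm rate in scenario \ref{prop: ou2} is the first display of Theorem~\ref{theo:invdens} — note the restrictions $\beta > 1$ ($d=1$), $\beta > 2$ ($d \geq 2$) there are harmless since $\rho$ is arbitrarily smooth. \textbf{The main obstacle} is Step~1, specifically obtaining the on-diagonal heat kernel bound with the sharp exponent $-d/2$ uniformly over the \emph{unbounded} starting and end points $x,y$: the conditioning/scaling argument must be carried out carefully so that the jump part does not spoil uniformity, and one must ensure the short-time covariance $\int_0^t \mathrm{e}^{-sB}Q\mathrm{e}^{-sB^\top}\diff s \succeq ct I_d$ with $c$ independent of $t \in (0,1]$, which uses continuity of $s \mapsto \mathrm{e}^{-sB}Q\mathrm{e}^{-sB^\top}$ at $0$ together with \ref{ou2}. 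All remaining steps are bookkeeping on top of the cited ergodicity results and the abstract theorems.
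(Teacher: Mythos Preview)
Your proposal is correct and follows essentially the same route as the paper: verify \ref{ass: density bound} from the nondegenerate Gaussian part under \ref{ou2}, import ergodicity/mixing from \cite{masuda2004,MASUDA200735,kevei2018}, apply Lemma~\ref{prop: poisson} for \ref{ass: conv bound}, and then invoke Corollary~\ref{cor:MSE} and Theorem~\ref{theo:invdens} using $\rho \in \mathcal{C}_b^\infty$. The only minor difference is in Step~1: rather than your conditioning argument, the paper works on the Fourier side, bounding $|\varphi_{X_t}^x(\lambda)| \leq \exp\!\big(-\tfrac{1}{2}\lambda^\top \Sigma_t \lambda\big)$ with $\Sigma_t = \int_0^t \mathrm{e}^{-sB}Q\mathrm{e}^{-sB^\top}\diff s$ (the jump contribution to the characteristic function has modulus $\leq 1$), so that Fourier inversion gives $p_t(x,y) \leq (2\pi)^{-d/2}(\det \Sigma_t)^{-1/2}$ uniformly in $x,y$, and then uses $\lim_{t\downarrow 0} t^{d/2}(\det \Sigma_t)^{-1/2} = (\det Q)^{-1/2} < \infty$; this is the same covariance matrix you arrive at and the arguments are equivalent, the Fourier route just making uniformity in $x,y$ immediate.
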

\begin{remark}
\begin{enumerate}[label = (\alph*), ref = (\alpha*)]
\item Since we can choose $\beta > 0$ arbitrarily large, we make the remarkable observation that in the scenarios described above, for any $\varepsilon > 0$ we can obtain the almost superoptimal rates $T^{-(1+\varepsilon)}$ and $(\log T /T)^{1/(2(1+\varepsilon))}$ in any dimension $d \geq 3$ for the pointwise $L^2$ and $\sup$-norm risk, respectively. Moreover, in any dimension, an adaptive choice of the bandwidth is not necessary.
\item The result demonstrates that even under much less stringent assumptions (logarithmic moments and unbounded drift) compared to the class of processes studied in the next section, there are examples of jump diffusions with L\'evy-driven jump part for which optimal estimation results are feasible. It is therefore an interesting question for future research to determine more general coefficient assumptions based on a linear growth condition on the drift that yield optimal estimation properties. 
\end{enumerate}
\end{remark}

\subsection{Example: Non-reversible L\'evy-driven jump diffusion processes}\label{sec: jump diff} 
The goal of this section is to show that solutions of the $d$-dimensional SDE, $d \in \N$,
\begin{equation}\label{SDE}
\begin{split}
X_t=X_0&+\int_0^t b(X_s)\d s+\int_0^t\sigma(X_s)\d W_s+\int_0^t\int_{\R^d}\gamma(X_{s-})z\, \tilde{N}(\diff s,\diff z)
\end{split}
\end{equation}
satisfy assumptions \ref{invariant probability measure assumption}, \ref{ass: density bound} and \ref{ass: beta mix} which then allows using Theorem \ref{theo:invdens} to bound the $\sup$-norm risk of the kernel invariant density estimator. 
Here, $\sigma\colon \R^d\to\R^{d\times d}, \gamma\colon \R^d\to\R^{d\times d}, b\colon \R^d\to\R^d$, $W$ denotes an $\R^d$-valued Brownian motion, $N$ is a Poisson random measure on $[0,\infty)\times\R^d\backslash\{0\}$ with intensity measure $\mu(\mathrm{d}s,\mathrm{d} z)=\d s\,\otimes\,\nu(\mathrm{d} z)$, and $\tilde{N} $ denotes the compensated Poisson random measure. 
Moreover, $\nu$ is a L\'evy measure and we assume that $N,W$ and $X_0$ are independent. 
Note that, if $z \mapsto \gamma(x)z$ is in $L^1(\R^d\backslash\{B_1 \},\nu)$ for all $x\in\R^d, \eqref{SDE}$ is equivalent to
\begin{equation}\label{Compensated SDE}
\begin{split}
X_t=X_0&+\int_0^t b^\ast(X_s)\d s+\int_0^t\sigma(X_s)\d W_s\\&+\int_0^t\int_{\Vert z\Vert\leq 1}\gamma(X_{s-})z\, \tilde{N}(\mathrm{d}s,\mathrm{d} z)+\int_0^t\int_{\Vert z\Vert>1}\gamma(X_{s-})z\, N(\mathrm{d}s,\mathrm{d} z),
\end{split}
\end{equation} with $b^\ast(x)\coloneqq b(x)-\int_{\Vert z\Vert >1}\gamma(x)z\, \nu(dz)$ and $B_1\coloneqq\{z\in\R^d: \Vert z\Vert \leq 1 \}.$
We assume the following.
\begin{enumerate}[leftmargin=*,label=($\mathscr{J}$\arabic*),ref=($\mathscr{J}$\arabic*)]
\item  \label{Lipschitz assumptions}The functions $b,\gamma,\sigma$ are globally Lipschitz continuous, $b$ and $\gamma$ are bounded, and, for $\mathbb{I}_{d\times d}$ denoting the $d\times d$-identity matrix, there exists a constant $c\geq1$ such that \[c^{-1}\mathbb{I}_{d\times d}\leq \sigma \sigma^\top\leq c\mathbb{I}_{d\times d},\]
where the ordering is in the sense of Loewner for positive semi-definite matrices.
\item \label{Kappa Assumptions}$\nu$ is absolutely continuous wrt the Lebesgue measure and, for an $\alpha\in(0,2)$, \[(x,z) \mapsto \Vert \gamma(x)z\Vert^{d+\alpha}\nu(z)\]
is bounded and measurable, where, by abuse of notation, we denoted the density of $\nu$ also by $\nu$. Furthermore,
if $\alpha=1$, \[\int_{r<\Vert\gamma(x) z\Vert\leq R} \gamma(x)z\,\nu(\mathrm{d}z)=0,\quad\text{ for any } 0<r<R<\infty,\ x\in\R^d.\] 
\item \label{Ergodicity Assumptions} There exist $c_1,c_2>0$ and $\eta_0 >0$ such that
\[\qv{x,b(x)}\leq -c_1\Vert x\Vert, \quad \forall x:\Vert x\Vert\geq c_2,\quad\text{ and }\quad
\int_{ \R^d}\Vert z\Vert^2 \e^{\eta_0\Vert z\Vert}\nu(\mathrm{d}z)<\infty.\]
\end{enumerate}
In \citep{amorino2020invariant}, the authors also investigate $L^2$ invariant density estimation for jump diffusions and use a similar approach for formulating requirements on the diffusion coefficients which imply their respective heat kernel bound and mixture assumptions. 
The conditions however are more restrictive and, in particular, the case of continuous diffusions cannot be handled within their framework since it requires $\supp(\nu)=\R^d$ and $\operatorname{det}(\gamma(x))>c$ for some constant $c>0$ and all $x \in\R^d$. In \cite{amorino21}, the authors improve the $L^2$ rate for dimension $d = 1$ from \cite{amorino2020invariant} to the parametric rate $1/T$ by imposing an additional smoothness restriction on the jump measure. Our main contribution in this section is to show that under the less stringent assumptions above, optimal convergence rates can be achieved not only wrt the $L^2$ risk but even wrt $\sup$-norm risk in any dimension.

Note that \ref{Lipschitz assumptions} and \ref{Ergodicity Assumptions} directly imply $\gamma(x)z\in L^1(\R^d\backslash\{B_1 \},\nu)$, so \eqref{SDE} and \eqref{Compensated SDE} are equivalent. The subsequent lemma shows that, under the given assumptions, there exists a pathwise unique strong solution for \eqref{SDE} and that the conditions of Corollary 1.5 of \citep{CHEN20176576} hold, implying the heat kernel bound \eqref{transition density bounds}. All proofs can be found in the Appendix.
\begin{lemma}\label{Chen and Applebaum assumptions}
Let \ref{Lipschitz assumptions}--\ref{Ergodicity Assumptions} hold. 
Then, \eqref{SDE} admits a c\`adl\`ag, non-ex\-plo\-sive, pathwise unique, strong solution possessing the strong Markov property, and the assumptions ($\mathbf{H}^\alpha$) and ($\mathbf{H}^\kappa$) of \citep{CHEN20176576} hold.
\end{lemma}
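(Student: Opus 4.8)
The lemma asserts two things: that \eqref{SDE} is well posed (pathwise unique, non-explosive c\`adl\`ag strong solution with the strong Markov property), and that the operator associated with \eqref{SDE} satisfies the structural hypotheses $(\mathbf{H}^\alpha)$ and $(\mathbf{H}^\kappa)$ of \citep{CHEN20176576}, which are exactly what their Corollary~1.5 invokes. I would treat the two claims separately. For well-posedness, the plan is to apply the classical Picard iteration scheme for SDEs with jumps. By \ref{Lipschitz assumptions}, $b,\sigma,\gamma$ are globally Lipschitz, and $\sigma\sigma^\top\le c\,\mathbb{I}_{d\times d}$ forces $\sigma$ to be bounded too, so all three coefficients are bounded and of at most linear growth. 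The moment condition in \ref{Ergodicity Assumptions} gives $\int_{\R^d}\lVert z\rVert^2\e^{\eta_0\lVert z\rVert}\,\nu(\d z)<\infty$, hence both $\int_{\R^d}\lVert z\rVert^2\,\nu(\d z)<\infty$ (so the full compensated jump integral in \eqref{SDE} is a well-defined square-integrable martingale, using boundedness of $\gamma$) and $\int_{\lVert z\rVert>1}\lVert z\rVert\,\nu(\d z)<\infty$; since $\gamma$ is bounded the latter yields $z\mapsto\gamma(x)z\in L^1(\R^d\setminus B_1,\nu)$, so \eqref{SDE} and \eqref{Compensated SDE} coincide. Standard moment estimates (Burkholder--Davis--Gundy for the martingale parts, Gronwall together with a localisation argument) then give a pathwise unique c\`adl\`ag strong solution, the linear growth bound yields a uniform bound on $\E\sup_{s\le t}\lVert X_s\rVert^2$ and hence non-explosion, and the strong Markov property follows since the solution is a measurable functional of $X_0$ and of the stationary-independent-increment driving pair $(W,N)$; in particular $\X$ is a non-explosive Borel right (indeed Feller) process.

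For the hypotheses of \citep{CHEN20176576}, I would start from the extended generator of $\X$, which on $\mathcal{C}^2_b(\R^d)$ acts as
\begin{align*}
\mathcal{A}f(x)={}&\langle b(x),\nabla f(x)\rangle+\tfrac12\operatorname{tr}\!\big((\sigma\sigma^\top)(x)D^2 f(x)\big)\\
&+\int_{\R^d}\big(f(x+\gamma(x)z)-f(x)-\langle\gamma(x)z,\nabla f(x)\rangle\big)\,\nu(\d z),
\end{align*}
where the jump integral converges absolutely thanks to \ref{Lipschitz assumptions} and the moment bound in \ref{Ergodicity Assumptions}, and then match its ingredients with the operator class of \citep{CHEN20176576}. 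The local part has diffusion matrix $a\coloneqq\sigma\sigma^\top$, which by \ref{Lipschitz assumptions} is bounded, globally Lipschitz (hence Dini-continuous) and uniformly elliptic ($c^{-1}\mathbb{I}_{d\times d}\le a\le c\,\mathbb{I}_{d\times d}$), together with the bounded Lipschitz drift $b$; this is precisely what the hypothesis $(\mathbf{H}^\kappa)$ on the local coefficients demands, boundedness placing $b$ trivially in the relevant Kato-type class.

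The substantive point is $(\mathbf{H}^\alpha)$, governing the order-$\alpha$ non-local part. Here the plan is to recast the jump operator in the canonical form $\int(f(x+w)-f(x)-\langle w,\nabla f(x)\rangle\,\one_{\{\cdot\}})\,\kappa(x,w)\lVert w\rVert^{-d-\alpha}\,\d w$ of \citep{CHEN20176576} via the substitution $w=\gamma(x)z$, reshuffling drift and principal-value terms according to whether $\alpha\in(0,1)$, $\alpha=1$ or $\alpha\in(1,2)$ --- which is exactly why \ref{Kappa Assumptions} includes the extra cancellation condition $\int_{r<\lVert\gamma(x)z\rVert\le R}\gamma(x)z\,\nu(\d z)=0$ in the critical case $\alpha=1$, matching the symmetry/principal-value requirement of \citep{CHEN20176576}. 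Up to the Jacobian, and with due care where $\gamma(x)$ degenerates, the resulting intensity kernel is $\kappa(x,w)=\lVert w\rVert^{d+\alpha}\,\nu\!\big(\gamma(x)^{-1}w\big)\,\lvert\det\gamma(x)\rvert^{-1}$; absolute continuity of $\nu$ (part of \ref{Kappa Assumptions}) makes this a genuine kernel, and its boundedness and measurability are precisely the content of the requirement in \ref{Kappa Assumptions} that $(x,z)\mapsto\lVert\gamma(x)z\rVert^{d+\alpha}\nu(z)$ be bounded and measurable, while $\alpha\in(0,2)$ is part of \ref{Kappa Assumptions} as well. This establishes $(\mathbf{H}^\alpha)$.

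I expect the only genuine difficulty to lie in this last change-of-variables step: tracking the Jacobian factor, handling possible rank deficiency of $\gamma(x)$ (so that the heat-kernel upper bound of \citep{CHEN20176576}, which only needs an upper bound on the jump intensity, still applies), and confirming that the resulting kernel carries exactly the order-$\alpha$ homogeneity and the $\alpha=1$ cancellation structure stipulated there. Everything else --- well-posedness and the verification of the local-coefficient hypothesis --- is routine once \ref{Lipschitz assumptions}--\ref{Ergodicity Assumptions} are unpacked, these conditions having been tailored to exactly these hypotheses.
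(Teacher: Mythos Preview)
Your well-posedness argument is essentially the paper's: it invokes Theorem~6.2.9 and Exercise~6.4.7 of Applebaum after checking the Lipschitz and growth conditions (C1)--(C2), which amounts to the same moment computations you sketch. But you have the two hypotheses of \citep{CHEN20176576} reversed. There, $(\mathbf{H}^a)$---the lemma statement's ``$(\mathbf{H}^\alpha)$'' is a typo---is the condition on the diffusion matrix $a=\sigma\sigma^\top$ (uniform ellipticity together with H\"older continuity, the latter following from $\sigma$ being bounded and Lipschitz), while $(\mathbf{H}^\kappa)$ is the condition on the jump kernel $\kappa$. The very label \ref{Kappa Assumptions} is the hint.

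More importantly, your change-of-variables programme for the jump part is unnecessary and manufactures exactly the difficulties you flag. Assumption \ref{Kappa Assumptions} is written so that $\kappa_\alpha(x,z)=\lVert\gamma(x)z\rVert^{d+\alpha}\nu(z)$ is \emph{already} the object entering $(\mathbf{H}^\kappa)$ in \citep{CHEN20176576}: the boundedness, measurability, and the $\alpha=1$ principal-value cancellation in \ref{Kappa Assumptions} are a verbatim transcription of that hypothesis, and the non-negativity requirement (their equation~(1.21)) is automatic since $\nu$ is a density. The paper therefore dispatches $(\mathbf{H}^\kappa)$ in a single sentence, with no substitution $w=\gamma(x)z$, no Jacobian tracking, and no need for $\gamma(x)$ to be invertible---which matters, because $\gamma$ is only assumed bounded and Lipschitz in \ref{Lipschitz assumptions}, so your substitution genuinely breaks down wherever $\gamma(x)$ drops rank. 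The Kato-class membership of the drift that you fold into your discussion is not part of this lemma at all; it is handled separately in the proof of Corollary~\ref{coroll: heat jump}.
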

Let $\X$ be the unique solution of \eqref{SDE} described in Lemma \ref{Chen and Applebaum assumptions}.

\begin{corollary}\label{coroll: heat jump}
Let \ref{Lipschitz assumptions}--\ref{Ergodicity Assumptions} hold. Then, transition densities $(p_t)_{t > 0}$ exist and there are constants $C,\lambda>1 $ such that the solution $\X$ of \eqref{SDE} satisfies the following heat kernel estimate
for all $x,y\in\R^d,0<t\leq 1,$
\begin{equation}\label{transition density bounds}
\begin{split}
&C^{-1}(t^{-d/2}\exp(-\lambda\Vert x-y\Vert^2/t)+(\operatorname{inf}_{x\in\R^d}\operatorname{ess\,inf}_{z\in\R^d}\kappa_\alpha(x,z))t(\Vert x-y\Vert +t^{1/2})^{-d-\alpha})\\
&\quad\leq p_t(x,y)\leq C(t^{-d/2}\exp(-\Vert x-y\Vert^2/(\lambda t))+\Vert \kappa_\alpha\Vert_\infty t(\Vert x-y\Vert +t^{1/2})^{-d-\alpha}),
\end{split}
\end{equation}
where $\kappa_\alpha(x,z)=\Vert \gamma(x)z\Vert^{d+\alpha}\nu(z)$. In particular, assumption \ref{ass: density bound} is satisfied.
\end{corollary}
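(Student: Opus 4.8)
The plan is to invoke Corollary 1.5 of \citep{CHEN20176576} directly, using Lemma \ref{Chen and Applebaum assumptions} to verify that the hypotheses ($\mathbf{H}^\alpha$) and ($\mathbf{H}^\kappa$) are in force. Concretely, since Lemma \ref{Chen and Applebaum assumptions} already establishes the existence of a c\`adl\`ag, non-explosive, pathwise unique strong solution $\X$ with the strong Markov property, and that ($\mathbf{H}^\alpha$) and ($\mathbf{H}^\kappa$) hold under \ref{Lipschitz assumptions}--\ref{Ergodicity Assumptions}, the cited result yields the two-sided heat kernel bound \eqref{transition density bounds} with the function $\kappa_\alpha(x,z) = \Vert \gamma(x)z\Vert^{d+\alpha}\nu(z)$, after identifying the jump kernel of \eqref{SDE} with the one in the framework of \citep{CHEN20176576}. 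The existence of transition densities $(p_t)_{t>0}$ for $t \le 1$ is immediate from the upper bound; for $t > 1$ one uses the Chapman--Kolmogorov equation $p_t(x,y) = \int_{\R^d} p_{t-\lfloor t \rfloor/2}(x,u)\, p_{\lfloor t\rfloor/2 + (t - \lfloor t\rfloor)}(u,y)\,\d u$ to propagate existence, although for the present corollary only $t \in (0,1]$ is needed.

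The remaining point is to deduce assumption \ref{ass: density bound} from the upper bound in \eqref{transition density bounds}. For $d \ge 2$ I would argue as follows: by \ref{Kappa Assumptions} the map $(x,z) \mapsto \Vert \gamma(x)z\Vert^{d+\alpha}\nu(z) = \kappa_\alpha(x,z)$ is bounded, so $\Vert \kappa_\alpha\Vert_\infty < \infty$. Hence for $x,y \in \R^d$ and $t \in (0,1]$,
\[
p_t(x,y) \le C\Big(t^{-d/2}\exp\big(-\Vert x-y\Vert^2/(\lambda t)\big) + \Vert \kappa_\alpha\Vert_\infty\, t\,(\Vert x - y\Vert + t^{1/2})^{-d-\alpha}\Big) \le C\big(t^{-d/2} + \Vert \kappa_\alpha\Vert_\infty\, t \cdot t^{-(d+\alpha)/2}\big),
\]
where in the first summand I bounded the exponential by $1$ and in the second I used $(\Vert x-y\Vert + t^{1/2})^{-d-\alpha} \le t^{-(d+\alpha)/2}$. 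Since $t \cdot t^{-(d+\alpha)/2} = t^{1 - (d+\alpha)/2} = t^{-d/2}\cdot t^{1-\alpha/2} \le t^{-d/2}$ for $t \in (0,1]$ because $\alpha < 2$, this gives $\sup_{x,y} p_t(x,y) \le C(1 + \Vert \kappa_\alpha\Vert_\infty)\, t^{-d/2} =: c_2 t^{-d/2}$, which is precisely \eqref{cond1:uni}. For $d = 1$ the same estimate gives $\sup_{x,y} p_t(x,y) \le c_2 t^{-1/2} =: \alpha(t)$, and $\int_{0+}^1 t^{-1/2}\,\d t = 2 < \infty$, so the one-dimensional part of \ref{ass: density bound} holds as well.

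I do not anticipate a genuine obstacle here, since the heavy lifting—verifying ($\mathbf{H}^\alpha$) and ($\mathbf{H}^\kappa$), in particular checking the dominating stable-like structure of the jump kernel, the uniform ellipticity of $\sigma\sigma^\top$, and the regularity/boundedness of the coefficients—is already packaged into Lemma \ref{Chen and Applebaum assumptions}. The only mild care needed is bookkeeping: making sure the normalization and the precise form of the jump kernel in \citep{CHEN20176576} match the compensated formulation \eqref{Compensated SDE} (which is legitimate by the equivalence of \eqref{SDE} and \eqref{Compensated SDE} noted after \ref{Ergodicity Assumptions}), and tracking that the constant $\lambda > 1$ from the cited theorem can be taken uniform. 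The absolute continuity of the marginal laws claimed in \ref{invariant probability measure assumption} is also a byproduct of the existence of $p_t$.
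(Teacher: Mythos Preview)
Your approach is essentially the same as the paper's, and the deduction of \ref{ass: density bound} from the upper bound in \eqref{transition density bounds} matches the paper's argument almost verbatim. However, there is a small but genuine gap in the first step.

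Corollary 1.5 of \citep{CHEN20176576} does not follow from ($\mathbf{H}^\alpha$) and ($\mathbf{H}^\kappa$) alone: it additionally requires that the drift $b^\ast$ in the compensated formulation \eqref{Compensated SDE} belongs to the Kato class $\mathbb{K}_2$ (see (2.28) of \citep{CHEN20176576}). Lemma \ref{Chen and Applebaum assumptions} does \emph{not} package this; it only certifies the hypotheses on the diffusion matrix and the jump kernel. The paper fills this by observing that \ref{Lipschitz assumptions} and \ref{Ergodicity Assumptions} force $b^\ast$ to be bounded (since $b$ and $\gamma$ are bounded and $\int_{\|z\|>1}\|z\|\,\nu(\mathrm{d}z)<\infty$), and bounded functions lie in $\mathbb{K}_2$ for $d\ge 2$, cf.\ the proof of Lemma~1 in \citep{amorino2020invariant}. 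You should add this verification explicitly rather than folding it into ``boundedness of the coefficients,'' since $b^\ast$ is not among the coefficients treated in Lemma \ref{Chen and Applebaum assumptions}.

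A second, more minor point: the result in \citep{CHEN20176576} is stated for $d\ge 2$, so the case $d=1$ needs a separate remark. The paper handles this by pointing to Lemma~1 of \citep{amorino2020invariant}, which explains how the arguments of \citep{CHEN20176576} adapt to $d=1$. Your proposal silently treats all dimensions uniformly when invoking the heat kernel bound, which is not quite licensed by the cited corollary as written.
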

Now our goal is to show that the solution $\X$ of \eqref{SDE} fulfills the fundamental assumption \ref{ass: bounded invariant} and exponential ergodicity along with the mixing property \ref{ass: beta mix}. First, observe that \ref{Lipschitz assumptions} implies that $b \in \mathcal{C}_b(\R^d; \R^{d})$ and $\sigma,\gamma \in  \mathcal{C}_b(\R^d; \R^{d \times d})$ and hence Theorem 6.7.4 in \citep{applebaum09} guarantees that the unique c\`adl\`ag Markov process $\X$ solving \eqref{SDE} is Feller and therefore Borel right.  Further, Corollary \ref{coroll: heat jump} in particular implies the existence of bounded transition densities and thus, by Lemma \ref{lemma: invariant bounded}, it suffices to show the existence of an invariant distribution. This will be done as a byproduct while proving exponential ergodicity and the exponential mixing property \ref{ass: beta mix}. For this, we will employ results of Masuda \citep{MASUDA200735} which are again based on the theory of stability of continuous-time Markov processes of Meyn and Tweedie \citep{meyn_tweedie_1993}. These lead us to the following proposition.
\begin{proposition}\label{Exponential Ergodicity}
Grant assumptions \ref{Lipschitz assumptions}--\ref{Ergodicity Assumptions}. 
Then, an invariant distribution exists, $\X$ is $V$-exponentially ergodic with locally bounded $V$ and the process $\X$ started in the invariant distribution $\mu$ is exponentially $\beta$-mixing.
\end{proposition}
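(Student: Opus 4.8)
The plan is to place $\X$ into the stability framework for jump diffusions of Masuda \citep{MASUDA200735}, which itself rests on the Meyn--Tweedie theory of geometrically ergodic continuous-time Markov processes \citep{meyn_tweedie_1993,DownMeynTweedie1995}. Concretely, I would verify three ingredients: (i) $\X$ is a non-explosive strong Markov (in fact Feller) process; (ii) $\X$ is $\lebesgue$-irreducible, aperiodic, and every compact set is petite; and (iii) a geometric Foster--Lyapunov drift inequality $\mathcal{A}V \le -\lambda V + b\,\one_C$ holds for the extended generator $\mathcal{A}$ of \eqref{SDE}, a suitable Lyapunov function $V$ with $V\one_{\fset}$ locally bounded (here $\fset=\R^d$), some $\lambda>0$, $b<\infty$, and a compact set $C$. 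Once these are in place, the Down--Meyn--Tweedie theorem (\citep[Theorem~5.2]{DownMeynTweedie1995}; cf.\ \citep[Theorem~6.1]{meyn_tweedie_1993}) produces a unique invariant probability $\mu$ together with $\lVert P_t(x,\cdot)-\mu\rVert_{\TV}\le CV(x)\e^{-\kappa t}$, i.e.\ $V$-exponential ergodicity with locally bounded $V$.

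Ingredient (i) is already available: Lemma~\ref{Chen and Applebaum assumptions} supplies a pathwise unique, non-explosive, c\`adl\`ag strong solution of \eqref{SDE} with the strong Markov property, and since \ref{Lipschitz assumptions} forces $b\in\mathcal{C}_b(\R^d;\R^d)$ and $\sigma,\gamma\in\mathcal{C}_b(\R^d;\R^{d\times d})$, Theorem~6.7.4 in \citep{applebaum09} shows $\X$ is Feller, hence Borel right. For (ii), the uniform ellipticity $c^{-1}\mathbb{I}_{d\times d}\le\sigma\sigma^\top\le c\mathbb{I}_{d\times d}$ from \ref{Lipschitz assumptions} together with the two-sided heat kernel estimate of Corollary~\ref{coroll: heat jump} yields $p_t(x,y)>0$ for all $t>0$ and $x,y\in\R^d$ (the leading Gaussian term in the lower bound is strictly positive, and Chapman--Kolmogorov propagates positivity to all $t$); hence $\X$ is $\lebesgue$-irreducible, and since it moreover has the strong Feller property and uniformly integrable continuous transition densities (again via \citep{CHEN20176576}), it is a $T$-process, so all compact sets are petite and $\X$ is aperiodic.

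The technical heart is (iii). I would take $V(x)=\exp\!\big(c\,(1+\lVert x\rVert^2)^{1/2}\big)$ with a small constant $c>0$ to be tuned; this $V$ is $\mathcal{C}^2$, locally bounded, and has compact sublevel sets. Applying $\mathcal{A}$ decomposes $\mathcal{A}V$ into a first-order drift term, a second-order diffusion term, and a jump integral. For $\lVert x\rVert\ge c_2$ the radial bound $\langle x,b(x)\rangle\le -c_1\lVert x\rVert$ from \ref{Ergodicity Assumptions} makes the drift term a negative multiple of $V(x)$; the diffusion term is $\mathsf{O}\big((c^2+c/\lVert x\rVert)V(x)\big)$ by the upper ellipticity bound in \ref{Lipschitz assumptions}; and the jump integral is handled via $V(x+\gamma(x)z)-V(x)\le V(x)\big(\e^{c\lVert\gamma(x)z\rVert}-1\big)$, boundedness of $\gamma$ from \ref{Lipschitz assumptions}, and the exponential moment $\int\lVert z\rVert^2\e^{\eta_0\lVert z\rVert}\,\nu(\diff z)<\infty$ from \ref{Ergodicity Assumptions}, which is finite as soon as $c\lVert\gamma\rVert_\infty\le\eta_0$. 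Tuning $c$ small and restricting to $\lVert x\rVert$ large, these estimates are combined---following the Lyapunov computation in \citep[\S2]{MASUDA200735}---to obtain $\mathcal{A}V(x)\le-\lambda V(x)$ outside a ball $\{\lVert x\rVert\le R\}$, and since $\mathcal{A}V$ is bounded above on that ball, the global drift inequality follows with $C=\{\lVert x\rVert\le R\}$.

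Given (i)--(iii), Down--Meyn--Tweedie yields the unique invariant $\mu$ and $V$-exponential ergodicity; this in particular upgrades Assumption~\ref{invariant probability measure assumption} to \ref{ass: bounded invariant} via Lemma~\ref{lemma: invariant bounded} together with the bounded transition densities from Corollary~\ref{coroll: heat jump}. Integrating the drift inequality against $\mu$ (with the standard truncation/Fatou argument to deal with membership of $V$ in the domain) gives $\mu(V)<\infty$, and then $V$-exponential ergodicity implies $\beta(t)=\int\lVert P_t(x,\cdot)-\mu\rVert_{\TV}\,\mu(\diff x)\le C\mu(V)\e^{-\kappa t}$, i.e.\ exponential $\beta$-mixing of $\X$ started in $\mu$, by the continuous-time version of \citep[Theorem~6.14(iii)]{nummelin1984} (see also \citep[Lemma~8.9]{chen05}) recalled before Assumption~\ref{ass: beta mix}. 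The main obstacle is step (iii): making the large-jump contribution $\int_{\lVert z\rVert>1}\big(V(x+\gamma(x)z)-V(x)\big)\,\nu(\diff z)$ genuinely dominated by the negative radial drift requires a careful radial expansion of $V$ and the right calibration of $c$ against the constants in \ref{Ergodicity Assumptions}, and it is precisely here that boundedness of $\gamma$ and the exponential jump moment are indispensable.
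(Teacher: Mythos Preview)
Your proposal is correct and follows essentially the same strategy as the paper: both place $\X$ into Masuda's stability framework \citep{MASUDA200735} (which packages Meyn--Tweedie/Down--Meyn--Tweedie), construct an exponential Lyapunov function $V(x)\sim\e^{\eta\lVert x\rVert}$ with small $\eta$ calibrated against the constants in \ref{Ergodicity Assumptions} and $\lVert\gamma\rVert_\infty$, and then read off $\mu(V)<\infty$ and exponential $\beta$-mixing from $V$-exponential ergodicity.

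The one noteworthy difference is in how irreducibility and petiteness are obtained. The paper verifies Masuda's Assumption~2(a)$'$ by passing to the auxiliary \emph{pure diffusion} $Y^u$ (jumps of size $>u$ removed) and invoking classical positivity of its transition density under uniform ellipticity. You instead work with $\X$ itself, using the two-sided heat kernel bound of Corollary~\ref{coroll: heat jump} to get $p_t(x,y)>0$ directly, hence $\lebesgue$-irreducibility, aperiodicity, and smallness of compact sets in one stroke. Your route is arguably more economical here since the heat kernel estimate is already available; the paper's detour through $Y^u$ is the price of matching Masuda's hypotheses verbatim. For the jump part of the Lyapunov computation, the paper converts to the fully compensated form (replacing $b^\ast$ by $b$) and bounds the integrand by $\sup_{t\in[0,1]}\lVert\mathsf{H}^2V(x+t\gamma(x)z)\rVert\,\lVert\gamma(x)z\rVert^2$, which feeds directly into the moment $\int\lVert z\rVert^2\e^{\eta_0\lVert z\rVert}\,\nu(\diff z)<\infty$; your sketch separates small and large jumps, which also works but requires handling the compensation for $\lVert z\rVert\le 1$ by a second-order Taylor bound rather than the multiplicative inequality $V(x+\gamma(x)z)-V(x)\le V(x)(\e^{c\lVert\gamma(x)z\rVert}-1)$ you wrote down (that bound alone is not $\nu$-integrable near $0$).
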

Gathering the results of Corollary \ref{coroll: heat jump} and Proposition \ref{Exponential Ergodicity} and employing Lemma \ref{prop: poisson} now yields that \ref{invariant probability measure assumption}--\ref{ass: conv bound} and \ref{ass: beta mix} are fulfilled for the solution $\mathbf{X}$ of \eqref{SDE}, i.e., $\mathbf{X} \in \bm{\Sigma} \cap \bm{\Theta}$. 
In particular, the results from Section \ref{subsec:densgen} can be applied.
\begin{theorem}\label{theo: ljd}
Let $D \subset \R^d$ be open and bounded and assume \ref{Lipschitz assumptions}--\ref{Ergodicity Assumptions}. 
If $\rho\vert_D \in \mathcal{H}_D(\beta,\mathsf{L})$ with $\beta \in (1,\ell+1]$ for $d=1$ and $\beta \in (2,\ell+1]$ for $d \geq 2$, then, the $\sup$-norm risk of the kernel estimator defined in \eqref{defdensest} is of order 
\[\E\Big[\lVert \hat{\rho}_{h,T} - \rho \rVert^p_{L^\infty(D)} \Big]^{1\slash p} \in 
\begin{cases}
\mathsf{O}(\sqrt{\log T/T}), &d=1,\\
\mathsf{O}(\log T/\sqrt T), &d=2,\\
\mathsf{O}\big((\log T/T)^{\beta/(2\beta+d-2)}\big), &d\ge3,\end{cases}
\quad \text{ if }
h \sim  \begin{cases} 
\log^2 T \slash \sqrt{T}, & d = 1,\\
\log T \slash T^{1/4}, & d = 2,\\
(\log T/T)^{-1/(2\beta+d-2)}, & d\ge3.
\end{cases}
\]
for any $p \geq 1$. If $\hat{h}_T \equiv \hat{h}^{(k)}_T$ is chosen adaptively according to \eqref{est:band0} for some $k \in \N$, then for any $d \geq 3$,
\[\E\Big[\big\lVert \hat{\rho}_{\hat{h}_T} - \rho \big\rVert_{L^\infty(D)} \Big] \in \mathsf{O}\left(\bigg(\frac{\log_{(k)} T \log T}{T}\bigg)^{\beta/(2\beta + d -2)} \right).\]
Moreover, for any $x \in \R^d$ such that $\rho\vert_D \in \mathcal{H}_D(\beta,\mathsf{L})$ for some $\beta \in (0,\ell+1]$ and a neighborhood $D$ of $x$, we have the pointwise $L^2$ risk estimate 
\[
\E\left[\left(\hat \rho_{h,T}(x)-\rho(x)\right)^2\right]\in \begin{cases}
\mathsf{O}(1/T), &d=1,\\
\mathsf{O}(\log T/T), &d=2,\\
\mathsf{O}\big(T^{-2\beta/(2\beta+d-2)}\big), &d\ge3,\end{cases} \quad \text{ if }
h \sim  \begin{cases} 
T^{-1/\gamma}, & d \leq 2, \gamma \leq \beta,\\
T^{-1/(2\beta+d-2)}, & d\ge3.
\end{cases}\]
\end{theorem}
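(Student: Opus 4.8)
The plan is to show that the solution $\X$ of \eqref{SDE} belongs to the class $\bm{\Sigma}\cap\bm{\Theta}$, after which all three assertions become immediate consequences of the general statistical results of Section~\ref{sec: density estimation}: the $\sup$-norm and adaptive rates follow from Theorem~\ref{theo:invdens}, and the pointwise $L^2$ rate from Corollary~\ref{cor:MSE}. Every probabilistic ingredient needed for the membership $\X\in\bm{\Sigma}\cap\bm{\Theta}$ has already been prepared, so the argument amounts to checking that the hypotheses and conclusions of the preparatory results dovetail with the definitions of $\bm{\Sigma}$ and $\bm{\Theta}$ recorded at the end of Section~\ref{sec:basic}.

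First I would collect the structural facts. By Lemma~\ref{Chen and Applebaum assumptions}, under \ref{Lipschitz assumptions}--\ref{Ergodicity Assumptions} there is a pathwise unique, non-explosive c\`adl\`ag strong solution with the strong Markov property; since \ref{Lipschitz assumptions} makes $b,\sigma,\gamma$ bounded, Theorem~6.7.4 in \citep{applebaum09} shows that the semigroup is Feller, so $\X$ is a non-explosive Borel right process. Corollary~\ref{coroll: heat jump} supplies transition densities $(p_t)_{t>0}$ together with the two-sided estimate \eqref{transition density bounds}; its upper half yields the on-diagonal bound \eqref{cond1:uni}, i.e.\ \ref{ass: density bound}, and in particular $\sup_{x,y\in\R^d}p_{1/2}(x,y)<\infty$. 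Proposition~\ref{Exponential Ergodicity} provides an invariant distribution $\mu$ and $V$-exponential ergodicity with locally bounded $V$, so \ref{invariant probability measure assumption} holds, together with exponential $\beta$-mixing of $\X$ started in $\mu$, i.e.\ \ref{ass: beta mix}. Since $p_{1/2}$ is bounded, Lemma~\ref{lemma: invariant bounded} applied with $\Delta=1/2$ upgrades \ref{invariant probability measure assumption} to \ref{ass: bounded invariant}, giving $\|\rho\|_\infty<\infty$.

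It remains to verify \ref{ass: conv bound} with the monotonicity property \eqref{eq: mon supp}, which is part of the definition of $\bm{\Theta}$ in every dimension and, for $\bm{\Sigma}$, is required only when $d=1$. For this I would invoke Lemma~\ref{prop: poisson}, whose hypotheses are $\|p_1\|_\infty<\infty$, local boundedness of $V\one_\fset$, and $\int_0^\infty\Xi(t)\,\diff t<\infty$. The first follows from the Chapman--Kolmogorov identity $p_1(x,y)=\int p_{1/2}(x,z)p_{1/2}(z,y)\,\diff z\le\|p_{1/2}\|_\infty$ combined with the $t=1/2$ heat kernel bound; the second is exactly the local boundedness of $V$ from Proposition~\ref{Exponential Ergodicity}; the third holds because the ergodicity and mixing rate is exponential, $\Xi(t)=c_\kappa\e^{-\kappa t}$. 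Lemma~\ref{prop: poisson} then delivers \ref{ass: conv bound} with $r_{\mathcal S}(t)=2C\|p_1\|_\infty\sup_{x\in\mathcal S\cap\fset}V(x)\,\Xi(t-1)$, and because $\mathcal S\mapsto\sup_{x\in\mathcal S\cap\fset}V(x)$ is nondecreasing in $\mathcal S$, so are the constants $c_{\mathcal S}=\int_1^\infty r_{\mathcal S}(t)\,\diff t$, which is exactly \eqref{eq: mon supp}. Hence $\X\in\bm{\Sigma}\cap\bm{\Theta}$.

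With the membership in hand, the three conclusions follow by direct application. For the estimator \eqref{defdensest} built from a $\|\cdot\|_\infty$-Lipschitz kernel of order $\ell$ supported on $[-1/2,1/2]^d$, Theorem~\ref{theo:invdens} applied to $\X\in\bm{\Sigma}$ on the bounded open domain $D$ under $\rho|_D\in\mathcal H_D(\beta,\mathsf{L})$ with $\beta\in(1,\ell+1]$ for $d=1$ and $\beta\in(2,\ell+1]$ for $d\ge2$ gives $\E[\|\hat\rho_{h,T}-\rho\|_{L^\infty(D)}^p]^{1/p}\in\mathsf{O}(\Psi_{d,\beta}(T))$ for the stated bandwidths, and for $d\ge3$ the adaptive rate \eqref{def:adaprate} for the Lepski-type selector $\hat h_T=\hat h_T^{(k)}$ from \eqref{est:band0}. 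The pointwise estimate is Corollary~\ref{cor:MSE} applied to $\X\in\bm{\Sigma}\cup\bm{\Theta}$ at $x$ with the neighbourhood $D$ and $\beta\in(0,\ell+1]$, yielding $\E[(\hat\rho_{h,T}(x)-\rho(x))^2]\in\mathsf{O}(\Phi_{d,\beta}^2(T))$ for $h\sim T^{-1/\gamma}$ with $\gamma\le\beta$ when $d\le2$ and $h\sim T^{-1/(2\beta+d-2)}$ when $d\ge3$; substituting the definitions \eqref{def:raten} of $\Phi_{d,\beta}$ and $\Psi_{d,\beta}$ then produces the case distinctions displayed in the statement. Since all the analytic and probabilistic difficulty has been absorbed into the cited results, I do not expect a genuine obstacle here; the only point that needs care is that the one-dimensional instance of $\bm{\Sigma}$ still requires \ref{ass: conv bound} with the monotone rate function, which is precisely why Lemma~\ref{prop: poisson} must be invoked rather than relying on Proposition~\ref{Exponential Ergodicity} alone.
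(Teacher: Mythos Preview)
Your proposal is correct and matches the paper's own argument essentially line by line: the paper states just before the theorem that ``Gathering the results of Corollary~\ref{coroll: heat jump} and Proposition~\ref{Exponential Ergodicity} and employing Lemma~\ref{prop: poisson} now yields that \ref{invariant probability measure assumption}--\ref{ass: conv bound} and \ref{ass: beta mix} are fulfilled \ldots, i.e., $\mathbf{X}\in\bm{\Sigma}\cap\bm{\Theta}$,'' and then treats the theorem as an immediate application of Theorem~\ref{theo:invdens} and Corollary~\ref{cor:MSE}. Your extra care in spelling out why $\lVert p_1\rVert_\infty<\infty$ via Chapman--Kolmogorov and why the rate function from Lemma~\ref{prop: poisson} satisfies the monotonicity \eqref{eq: mon supp} is entirely appropriate and fills in details the paper leaves implicit.
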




\appendix
\section{Supplements of Section \ref{sec:basic}}
\subsection{Assumption \ref{ass: conv bound} and the exponential $\beta$-mixing property} \label{sec:betamix}

As in the rest of the paper, we will assume in this section that $\X$ is a Borel right Markov process with unique invariant distribution $\mu$ possessing a Lebesgue density $\rho$. Let us start by collecting some important definitions in the realm of stability theory of Markov processes. We say that $\mathbf{X}$ is $\psi$-\textit{irreducible} for some $\sigma$-finite measure $\psi$ on its state space if $\psi(B) > 0$ for some Borel set $B$ implies
\[U(x,B) \coloneqq \int_0^\infty P_t(x,B) \diff{t} = \E^x[\eta_B]> 0\]
for any $x \in \R^d$, i.e., the expected sojourn time $\eta_B$ of $\mathbf{X}$ in $B$ (or, equivalently, the potential of $B$), where $\eta_B = \int_0^\infty \one_{\{X_t \in B\}} \diff{t}$, when $\mathbf{X}$ is started in an arbitrary state is strictly positive. If for $B \in \mathcal{B}(\R^d)$, $\psi(B) > 0$ even implies $\PP^x(\eta_B = \infty) = 1$ for any $x \in \R^d$, we say that $\mathbf{X}$ is \textit{Harris recurrent} and that $\psi$ is a Harris measure. Harris recurrent Markov processes having an invariant distribution (which is unique in this case) are called positive Harris recurrent.  A Borel set $C$ is called \textit{small} if there exists $T > 0$ and a non-trivial measure $\nu$ on the state space such that
\[P_T(x,\cdot) \geq \nu(\cdot), \quad x \in C.\]
\textit{Petite} sets generalize the notion of small sets. We call a Borel set $C$ petite if there exists a sampling distribution $a$ on $(\R_+,\mathcal{B}(\R_+))$ and a non-trivial measure $\nu_a$ on the state space s.t.\
\[K_a(x ,\cdot) \coloneqq \int_0^\infty P_t(x,\cdot)\, a(\diff{t}) \geq \nu_a(\cdot), \quad x \in C,\]
i.e., small sets are petite sets with sampling distribution $a = \delta_T$ for some $T>0$. All three concepts have obvious counterparts for discrete-time chains. If moreover the $\psi$-irreducible process $\mathbf{X}$ possesses a small set $C$ such that $\psi(C) > 0$ and there is $T> 0$ such that
$P_t(x,C) > 0$, $\forall x \in C$, $t \geq T$, we say that $\mathbf{X}$ is \textit{aperiodic}.

These notions are of central importance in the theory of stability of Markovian processes on general state spaces in both discrete as well as continuous time. 
In discrete time, the existence of small sets allows the construction of a related Markov chain via the technique of Nummelin splitting, which shares the same stability properties with the original chain but possesses an atom.
This in turn allows to transfer well-known reasoning in Markov chain theory on countable state spaces to the general state space situation with renewal arguments. 
With the Meyn and Tweedie approach to stability of continuous-time Markov processes, which heavily involves the aforementioned concept of aperiodicity, we can then infer stability properties through sampled chains, generalizing discrete-time results to continuous time. 
For a complete picture in discrete time, we refer to the monograph \citep{MeynTweedie2009}. 
Continuous-time theory was developed in the 1990s in a series of papers \citep{DownMeynTweedie1995,MeynTweedie1993b,MeynTweedie1993,meyn_tweedie_1993} and many other subsequent contributions.

We see that these concepts are quite natural when we aim to infer stability of general Markov processes, and we need no more than irreducibility as well as the property that compact sets are small together with exponential decay in \eqref{cond2:uni} to infer exponential $\beta$-mixing of the process.

\begin{proposition} \label{prop:betamix}
Suppose that $\mathbf{X}$ is $\psi$-irreducible and that every compact set $\mathcal{S} \subset \R^d$ is small. Moreover, let \ref{ass: conv bound} be satisfied with $\fset = \R^d$ and
\begin{equation} \label{cond3:uni}
r_{\mathcal{S}}(t) \coloneqq C_{\mathcal{S}} \mathrm{e}^{-\kappa_{\mathcal{S}}t},\quad t > 0,
\end{equation}
with constants $C_{\mathcal{S}}, \kappa_{\mathcal{S}} > 0$. Then, $\mathbf{X}$ is exponentially $\beta$-mixing.
\end{proposition}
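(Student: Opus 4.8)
The plan is to deduce exponential $\beta$-mixing from $\mu$-a.s.\ $V$-exponential ergodicity of $\mathbf{X}$ for a suitable Lyapunov function $V$ with $\mu(V) < \infty$; by the continuous-time version of \cite[Theorem 6.14.(iii)]{nummelin1984} (equivalently \cite[Lemma 8.9]{chen05}) recalled in Section~\ref{sec:basic}, these two properties are equivalent, so it suffices to produce such a $V$. This will be done by verifying the hypotheses of the Meyn--Tweedie stability theory for Borel right processes.

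The first step is to record the structural consequences of the standing assumptions. Since $\mathbf{X}$ is $\psi$-irreducible and admits an invariant probability measure, it is positive (Harris) recurrent and $\mu$ may be taken as a maximal irreducibility measure. Aperiodicity follows from \ref{ass: conv bound}: picking any compact set $C$ with $\mu(C) > 0$---which is small by hypothesis---we get $\sup_{x \in C}\lvert P_t(x,C) - \mu(C)\rvert \le \lebesgue(C)\,C_C\,\mathrm{e}^{-\kappa_C t} \to 0$, so that $P_t(x,C) \ge \mu(C)/2 > 0$ for every $x \in C$ once $t$ is large, which is precisely aperiodicity with respect to the small set $C$.

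The core of the argument is to upgrade this to geometric ($V$-uniform) ergodicity. By \citep{DownMeynTweedie1995}, for a $\psi$-irreducible aperiodic process this is equivalent to the existence of a closed petite set $C$ and an $\varepsilon > 0$ with $\sup_{x \in C}\E^x[\mathrm{e}^{\varepsilon\tau_C}] < \infty$, where $\tau_C$ is the first return time to $C$ after a fixed positive time---equivalently, to a geometric Foster--Lyapunov drift inequality $\mathcal{A}V \le -cV + b\one_C$ for the extended generator $\mathcal{A}$. I would verify the geometric recurrence condition as follows. Fix a compact petite set $C$ with $\mu(C) > 0$, and for a ball $B \supset C$ apply \ref{ass: conv bound} on $B$ to obtain, uniformly over $x \in B$ and all $t$ above a threshold $t_B$, the lower bound $P_t(x,C) \ge \mu(C) - \lebesgue(C)\,C_B\,\mathrm{e}^{-\kappa_B t} \ge \mu(C)/2$. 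Consequently, excursions of $\mathbf{X}$ away from $C$ that remain inside $B$ return to $C$ with geometrically decaying probability, while excursions that exit $B$ can be controlled only through invariance of $\mu$, which forces $\int P_t(x,B^c)\,\mu(\diff x) = \mu(B^c)$ for every $t$; these far excursions are then handled by a renewal argument over the successive re-entry times into $B$ afforded by positive Harris recurrence. Carefully balancing the geometric decay on the ``bulk'' $B$ against this control on far excursions yields $\sup_{x \in C}\E^x[\mathrm{e}^{\varepsilon\tau_C}] < \infty$. I expect this to be the main obstacle, precisely because \ref{ass: conv bound} says nothing directly about the law of $\mathbf{X}$ outside compact sets, so that all the ``global'' control must be extracted from stationarity of $\mu$ together with the exponential decay on compacts; the exponential rate in \eqref{cond3:uni} is exactly what prevents the polynomially ergodic situation, in which such a balancing is impossible.

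Finally, the Lyapunov function $V \ge 1$ furnished by the drift inequality satisfies $\mu(V) < \infty$ (integrate the drift inequality against $\mu$), and geometric ergodicity gives $\lVert P_t(x,\cdot) - \mu\rVert_{\mathrm{TV}} \le R\,V(x)\,\mathrm{e}^{-\kappa t}$ for $\mu$-a.e.\ $x$. Integrating against $\mu$ then yields $\beta(t) = \int\lVert P_t(x,\cdot) - \mu\rVert_{\mathrm{TV}}\,\mu(\diff x) \le R\,\mu(V)\,\mathrm{e}^{-\kappa t}$, which is the claimed exponential $\beta$-mixing.
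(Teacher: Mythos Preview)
Your overall architecture is correct and matches the paper: establish aperiodicity from \ref{ass: conv bound}, invoke the Meyn--Tweedie theory from \citep{DownMeynTweedie1995} to obtain $V$-exponential ergodicity on a full absorbing set, check $\mu(V)<\infty$, and integrate to get exponential $\beta$-mixing. The aperiodicity argument and the final two steps are essentially identical to the paper's.

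The genuine gap is in what you call ``the core of the argument''. You choose to verify geometric ergodicity via the hitting-time characterization $\sup_{x\in C}\E^x[\mathrm{e}^{\varepsilon\tau_C}]<\infty$, and then sketch a renewal/balancing argument between excursions inside a large ball $B$ and excursions leaving $B$. You yourself flag this as ``the main obstacle'' and do not carry it out; and indeed it is not clear how to make it work, since \ref{ass: conv bound} gives you no quantitative control whatsoever once the process has left any fixed compact, and stationarity of $\mu$ alone does not produce exponential return-time moments.

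The paper sidesteps this entirely by using a different equivalent condition in Theorem 5.3 of \citep{DownMeynTweedie1995}: the existence of a small (petite) set $\mathcal{S}$ with $\psi(\mathcal{S})>0$ and constants $M,\kappa>0$ such that $\lvert P_t(x,\mathcal{S})-\mu(\mathcal{S})\rvert\le M\mathrm{e}^{-\kappa t}$ for all $x\in\mathcal{S}$. This condition is \emph{immediate} from \ref{ass: conv bound} with the exponential rate \eqref{cond3:uni}: for compact $\mathcal{S}$ one simply integrates the pointwise density bound to get
\[
\lvert P_t(x,\mathcal{S})-\mu(\mathcal{S})\rvert \le \int_{\mathcal{S}}\lvert p_t(x,y)-\rho(y)\rvert\diff y \le C_{\mathcal{S}}\lebesgue(\mathcal{S})\,\mathrm{e}^{-\kappa_{\mathcal{S}}t}, \quad x\in\mathcal{S}.
\]
No excursion analysis, no renewal argument, no control outside compacts is needed. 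The same inequality (combined with $\mu(\mathcal{S})>0$) already delivered aperiodicity, so the whole input to Theorem 5.3 comes in one line. The theorem then furnishes the extended-real $V\ge1$ satisfying a skeleton drift $P_TV\le\lambda V+b\one_\Theta$, and the paper deduces $\mu(V)<\infty$ from Theorem 14.0.1 in \citep{MeynTweedie2009} exactly as you propose. In short: you picked the hard equivalent condition; the paper picks the one that \ref{ass: conv bound} verifies for free.
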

\begin{proof}
Let $\mathcal{S} \subset \R^d$ be compact such that $\lebesgue(\mathcal{S}) > 0$. Since $\R^d$ can be covered by countably many compact sets and the irreducibility measure $\psi$ is $\sigma$-finite, we can also assume that $\psi(\mathcal{S})> 0$ and $\mu(\mathcal{S}) > 0$. Letting $(P_t)_{t \geq 0}$ denote the semigroup associated to $\mathbf{X}$, we obtain from \eqref{cond2:uni} and \eqref{cond3:uni} that, for any $x \in \mathcal{S}$ and $t > 0$,
\begin{equation*}
\lvert P_t(x,\mathcal{S}) - \mu(\mathcal{S})\rvert \leq \int_{\mathcal{S}} \lvert p_t(x,y) - \rho(y)\rvert \diff{y} \leq C_{\mathcal{S}} \mathrm{e}^{-\kappa_{\mathcal{S}}t} \lebesgue(\mathcal{S}) = \tilde{C}_{\mathcal{S}}\mathrm{e}^{-\kappa_{\mathcal{S}}t},
\end{equation*}
with $\tilde{C}_{\mathcal{S}} = C_{\mathcal{S}}\lebesgue(\mathcal{S}).$ Since $\mu(\mathcal{S}) > 0$, this implies in particular that there exists $T(\mathcal{S}) > 0$ such that $P_t(x,\mathcal{S}) > 0$ for all $t \geq T(\mathcal{S})$ and $x \in \mathcal{S}$. Since $\mathcal{S}$ is small by assumption, it follows that $\mathbf{X}$ is aperiodic. Hence, by Theorem 5.3 in \citep{DownMeynTweedie1995} and the remarks thereafter, there exists (a) an extended real-valued measurable function $V \geq 1$ such that, for some $T > 0$, we have
\begin{equation} \label{drift skeleton}
P_T V(x) \leq \lambda V(x) + b \mathbf{1}_\Theta
\end{equation}
for some $0 < \lambda < 1$, $ b\geq 0$ and a small set $\Theta \in \mathcal{B}(\R^d)$ and (b) a set $S_V \subset \{V < \infty\}$, which is full and absorbing\textemdash that is, $\mu(S_V) = 1$ and $P_T(x, S_V) = 1$ for any $x \in S_V$\textemdash such that $\mathbf{X}$ restricted to $S_V$ is exponentially ergodic in the sense
\begin{equation} \label{exponential ergodicity full}
\lVert P_t(x,\cdot) - \mu \rVert_{\mathrm{TV}} \leq CV(x)\mathrm{e}^{-\kappa t}, \quad x \in S_V,
\end{equation}
for some constants $C,\kappa > 0$. Noting that \eqref{drift skeleton} implies
\[\Delta \tilde{V} \leq -V + \frac{b}{1-\lambda}\mathbf{1}_\Theta\]
with $\tilde{V} = V\slash (1-\lambda) \geq 0$ and $\Delta \coloneqq P_T - \mathbb{I}$, it follows from Theorem 14.0.1 in \citep{MeynTweedie2009} that $\mu(V) < \infty.$ The claim on exponential $\beta$-mixing of the process now follows from \eqref{exponential ergodicity full} since
\[
\int_{\R^d} \lVert P_t(x,\cdot) - \mu\rVert_{\mathrm{TV}}\, \mu(\diff{x}) = \int_{S_V} \lVert P_t(x,\cdot) - \mu\rVert_{\mathrm{TV}}\, \mu(\diff{x})
\leq C\mathrm{e}^{-\kappa t} \int_{S_V} V(x)\, \mu(\diff{x})= \tilde{C} \mathrm{e}^{-\kappa t},
\]
for any $t \geq 0$, where finiteness of $\tilde{C} = C \mu(V)$ was discussed above and for the first equality we used that $S_V$ is full. 
\end{proof}

Compactness of small sets can be inferred for a quite general class of Markov processes. We say that $\mathbf{X}$ is a $T$-process if there exists a non-trivial continuous component for some sampled chain, i.e., there exists a sampling distribution $a$ on $(\R_+,\mathcal{B}(\R_+))$ and a non-trivial, lower semi-continuous kernel $T$ on the state space s.t.\
\[K_a(x,\cdot) \geq T(x,\cdot), \quad x \in \R^d.\]
Many processes in applied probability can be shown to be $T$-processes such as price processes driven by L\'evy risk and return processes \citep{paulsen98}, certain piecewise deterministic Markov processes used for MCMC \citep{bierkens2019} or queuing networks \citep{down97}. Moreover, any open set irreducible weak $\mathcal{C}_b$-Feller process is a $T$-process (cf.\ \citep[Theorem 7.1]{tweedie1994}). Markov processes having the strong Feller property\textemdash that is, the semigroup satisfies $P_t \mathcal{B}_b(\R^d) \subset \mathcal{C}_b(\R^d)$ for all $t \geq 0$\textemdash are trivially $T$-processes, since any operator $P_t$ is a continuous component for itself. Here, we denoted  by $\mathcal{C}_b(\R^d)$ the family of bounded, continuous functions on $\R^d$  and by $\mathcal{B}_b(\R^d)$ the family of bounded Borel functions on $\R^d$. The strength of Markov processes with the strong Feller property---and $T$-processes as a generalization of such processes---comes from making possible to connect distributional properties of the Markov process induced by the semigroup and topological properties of the state space, thus allowing to use knowledge of the topology to infer strong stability results of the Markov process.  Classical examples of Markov processes with the strong Feller property are L\'evy processes with absolutely continuous semigroup with respect to the Lebesgue measure \citep[Theorem 2.2]{Hawkes1979}, diffusion processes with hypoelliptic Fisk--Stratonovich-type generator  \citep[Lemma 5.1]{ichi1974}, diffusion processes on Hilbert spaces under appropriate assumptions on the coefficients \citep[Theorem 1.2]{peszat1995}, or solutions of different classes of parabolic SPDEs \citep{maslowski89,prato91,prato95,eckmann2001}. More recently, the strong Feller property was discussed for switching (jump-)diffusions \citep{xi2013,zhu2009}, for jump-diffusions with non-Lipschitz coefficients \citep{xi2019}, or Markov semigroups generated by singular SPDEs such as the KPZ equation in Hairer and Mattingly \citep{hairer2016}. For an account discussing conditions for which (weak) $\mathcal{C}_b$-Feller processes 
are even strong Feller, we refer to Schilling and Wang \citep{schilling2012}.

Let us now infer the exponential $\beta$-mixing property for $T$-processes given exponential decay in \eqref{cond2:uni} and, as a natural mixing requirement, ergodicity in the sense of total variation convergence to the invariant distribution, i.e.,
$\lVert P_t(x,\cdot) - \mu \rVert_{\mathrm{TV}} \underset{t \to \infty}{\longrightarrow} 0$, $\forall x \in \R^d$. Note that indeed, dominated convergence shows that any stationary, ergodic Markov process is $\beta$-mixing.

\begin{proposition} \label{prop2:betamix}
Let $\mathbf{X}$ be an ergodic $T$-process such that \ref{ass: conv bound} is satisfied for $r_{\mathcal{S}}$ given as in \eqref{cond3:uni} and $\fset = \R^d$. Then, $\mathbf{X}$ is positive Harris recurrent, every compact set is small and $\mathbf{X}$ is exponentially $\beta$-mixing.
\end{proposition}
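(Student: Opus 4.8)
The plan is to reduce the exponential $\beta$-mixing conclusion to Proposition \ref{prop:betamix} by verifying its two standing hypotheses for $\mathbf{X}$ --- $\psi$-irreducibility and smallness of every compact set --- and to harvest positive Harris recurrence along the way from the classical stability theory of $T$-processes.

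The first ingredient is almost free. Ergodicity means $\lVert P_t(x,\cdot) - \mu \rVert_{\mathrm{TV}} \to 0$ for every $x \in \R^d$, so $P_t(x,B) \to \mu(B)$ for every Borel set $B$; if $\mu(B) > 0$ this forces $U(x,B) = \int_0^\infty P_t(x,B)\,\diff t = \infty$ for all $x$, so $\mathbf{X}$ is $\mu$-irreducible. Combining the same convergence with tightness of the probability measure $\mu$ also gives boundedness in probability: for $\varepsilon > 0$ choose a compact $K$ with $\mu(K) > 1 - \varepsilon$; then $\lim_{t \to \infty} P_t(x,K) = \mu(K) > 1 - \varepsilon$ for every $x$, whence (by reverse Fatou along integer times) $\PP^x(X_t \in K \text{ infinitely often}) > 1-\varepsilon$ and $\mathbf{X}$ is non-evanescent. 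By the Harris recurrence criterion for $T$-processes (\citep{MeynTweedie1993b,DownMeynTweedie1995,tweedie1994}), a non-evanescent $\psi$-irreducible $T$-process is Harris recurrent, and since $\mathbf{X}$ possesses the invariant probability $\mu$ it is positive Harris recurrent. The same theory yields that every compact subset of $\R^d$ is petite.

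The decisive second step uses the exponential rate \eqref{cond3:uni} to upgrade petiteness to smallness via aperiodicity, in the spirit of the opening of the proof of Proposition \ref{prop:betamix}. Write $\R^d$ as a countable union of compacts and fix one, $\mathcal{S}$, with $\mu(\mathcal{S}) > 0$. Then \eqref{cond2:uni} and \eqref{cond3:uni} give $\sup_{x \in \mathcal{S}} \lvert P_t(x,\mathcal{S}) - \mu(\mathcal{S}) \rvert \le C_{\mathcal{S}} \lebesgue(\mathcal{S})\, \mathrm{e}^{-\kappa_{\mathcal{S}} t}$, so there is $T(\mathcal{S}) > 0$ with $\inf_{x \in \mathcal{S}} P_t(x,\mathcal{S}) > 0$ for all $t \ge T(\mathcal{S})$. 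Since $\mathcal{S}$ is petite with $\psi(\mathcal{S}) = \mu(\mathcal{S}) > 0$, this is precisely the condition that makes the $\psi$-irreducible process $\mathbf{X}$ aperiodic --- aperiodicity being detectable on a petite set (equivalently, on a resolvent or skeleton chain) rather than only on a small one. For a $\psi$-irreducible \emph{aperiodic} $T$-process the continuous-time Meyn--Tweedie machinery then promotes every petite set, and hence every compact set, to a small set. At this point both hypotheses of Proposition \ref{prop:betamix} are in force, and since \ref{ass: conv bound} holds with the exponential rate \eqref{cond3:uni} and $\fset = \R^d$ by assumption, that proposition yields the exponential $\beta$-mixing of $\mathbf{X}$.

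I expect the main obstacle to be exactly the passage from ``compact $\Rightarrow$ petite'' to ``compact $\Rightarrow$ small'' once aperiodicity is available: in discrete time this is the standard fact that petite sets are small for aperiodic $\psi$-irreducible chains (\citep{MeynTweedie2009}), but in continuous time ``small'' is strictly stronger than ``petite'' (a genuine transition time $T$ with $P_T(x,\cdot) \ge \nu(\cdot)$ rather than an averaged kernel), so the argument has to be routed through an appropriate skeleton or resolvent chain and its aperiodicity/smallness structure transported back. A secondary point needing care is lining up the precise $T$-process statements for non-evanescence, the Harris dichotomy, and petiteness of compacts, but these are classical and only require citing \citep{MeynTweedie1993b,DownMeynTweedie1995,tweedie1994} accurately.
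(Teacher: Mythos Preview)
Your overall strategy matches the paper's: reduce to Proposition \ref{prop:betamix} by establishing $\mu$-irreducibility and smallness of compacts, with positive Harris recurrence and petiteness of compacts coming from the standard $T$-process theory applied to an ergodic (hence bounded in probability) process. Your first paragraph is essentially identical to the paper's opening.

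The divergence is in the passage from ``compact $\Rightarrow$ petite'' to ``compact $\Rightarrow$ small'', and here the paper's route is cleaner than yours. You attempt to go through aperiodicity: show $P_t(x,\mathcal{S})>0$ eventually on a petite $\mathcal{S}$, declare the process aperiodic, then invoke ``aperiodic $\Rightarrow$ petite $=$ small''. The difficulty you correctly flag is that the paper's definition of aperiodicity requires a \emph{small} set, not merely a petite one, so your aperiodicity check is circular as stated, and the continuous-time implication ``aperiodic $\Rightarrow$ petite $=$ small'' still needs to be routed through a skeleton anyway. The paper sidesteps this entirely: it never mentions aperiodicity in this proof. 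Instead it observes that ergodicity plus positive Harris recurrence, via the reverse implication of Theorem~6.1 in \cite{MeynTweedie1993}, guarantee the existence of an irreducible skeleton $\mathbf{X}^\Delta$ for some $\Delta>0$; then Proposition~6.1 of the same reference gives petite $=$ small directly. This is exactly the ``routing through a skeleton'' you anticipate needing, but the paper reaches it in one step from hypotheses already in hand (ergodicity, positive Harris recurrence), without the aperiodicity detour and its attendant definitional issue.
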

\begin{proof}
For the exponential $\beta$-mixing property, it suffices to check that every compact set is small by Proposition \ref{prop:betamix}, since ergodicity clearly implies $\mu$-irreducibility of $\X$. We prove this property together with positive Harris recurrence at once. To this end, for a given $\varepsilon > 0$, choose a compact set $C \subset \R^d$ such that $\mu(C) \geq 1 - \varepsilon$. Then, for fixed $x \in \R^d$, ergodicity guarantees that $\lim_{t \to \infty} \PP^x(X_t \in C) \geq 1 - \varepsilon$, and hence $\X$ is bounded in probability on average as defined on p.\ 495 of \citep{MeynTweedie1993}. Since $\X$ is an irreducible $T$-process, Theorem 3.2 and Theorem 4.1 of the same reference yield Harris recurrence and petiteness of compact sets.
It remains to show that small and petite sets coincide for the given process. The reverse implication of Theorem 6.1 in \citep{MeynTweedie1993} guarantees that there exists an irreducible skeleton $\X^\Delta = (X_{n \Delta})_{n \in \N_0}$ for some $\Delta > 0$ thanks to ergodicity and positive Harris recurrence of $\mathbf{X}$. Proposition 6.1 in \citep{MeynTweedie1993} therefore implies equivalence of small and petite sets, which finishes the proof. 
\end{proof}

\subsection{Proofs for Section \ref{sec:basic}}\label{app: basic}
\begin{proof}[Proof of Proposition \ref{prop:varextra}]
Without loss of generality, let $T\geq1$ be fixed.
Then, using the Markov property and the invariance of $\mu$, for any $\delta\in[0,1]$,
\begin{align*}
&\Var\left(\int_0^Tf(X_s)\diff s\right)\ =\ \E\left[\left(\int_0^T(f(X_s)-\E f(X_0))\diff s\right)^2\right]\\
&\hspace*{3em}=\ 2\E\left[\int_0^T\int_0^u\left(f(X_0)-\E f(X_0)\right)\left(f(X_{u-s})-\E f(X_0)\right)\diff s\diff u\right]\\
&\hspace*{3em}=\ 2\int_0^T\int_0^u\left(\E\left[f(X_0)f(X_{u-s})\right]-\left(\E f(X_0)\right)^2\right)\diff s\diff u\\
&\hspace*{3em}=\ 2\int_0^T\int_0^u\left[\iint_{\R^{d\times d}}f(x)f(y)p_{u-s}(x,y)\diff y\mu(\diff x)-\int f(x)\,\mu(\diff x)\int f(y)\rho(y)\diff y\right]\diff s\diff u\\
&\hspace*{3em}=\ 2\int_0^T\int_0^u\int_\fset \int_{\R^d}f(x)f(y)\left(p_{u-s}(x,y)-\rho(y)\right)\diff y\,\mu(\diff x)\diff s\diff u \\
&\hspace*{3em}=\ 2\left(\mathcal I(0,\delta)+\mathcal I(\delta,1)+ \mathcal I(1,T)\right),
\end{align*}
with (substituting $v=u-s$)
\[
\mathcal I(a,b)\coloneqq\ \int_a^b(T-v)\int_{\R^d}\int_{\fset}f(x)f(y)\left(p_v(x,y)-\rho(y)\right)\,\mu(\diff x)\diff y\diff v,\quad 0\le a<b\le T.
\]
It follows from the assumption on the convergence of the transition density in \eqref{cond2:uni} that
\begin{align*}
\mathcal I(1,T)&\leq\
\int_1^T(T-v)\sup_{x \in \cS \cap \fset ,y\in\mathcal S}\left|p_v(x,y)-\rho(y)\right|\diff v\ \iint_{\R^{d}\times \R^d}f(x)f(y)\,\mu(\diff x)\diff y\\
&\leq\ T \|f\|_\infty^2\lebesgue(\mathcal S)\mu(\mathcal S)\int_1^Tr_{\mathcal{S}}(v)\diff v\ \leq\ c_{\mathcal{S}}T\|f\|_\infty^2\lebesgue(\mathcal S)\mu(\mathcal S).
\end{align*}
It remains to consider the first parts of the integral.
We now restrict to dimension $d\ge3$; the remaining cases are handled with analogous arguments.
Note first that
\begin{equation}\label{int0}
\mathcal I(0,\delta)\
\leq\ T\|f\|_\infty^2\int_0^\delta\iint_{\mathcal{S} \times \R^d} p_v(x,y)\,\mu(\diff x)\diff y\diff v\ =\ T\|f\|_\infty^2\mu(\mathcal{S})\delta.
\end{equation}
On the other hand, the heat kernel bound \eqref{cond1:uni} gives for any  $x,y \in \R^d$,
\[\int_{\delta}^1 p_v(x,y) \diff v \leq c_2 \int_{\delta}^1 v^{-d\slash2} \diff v = c_2^\prime \delta^{1-d\slash2},\]
where $c_2'=2\slash (d-2) c_2$. Letting $\delta=(\lebesgue(S))^{2/d}$ and exploiting that $\lebesgue(\mathcal{S})< 1$, it follows
\[
\mathcal I(\delta,1)\ \le\ T\|f\|_\infty^2\int_\delta^1\iint_{\mathcal{S}^2} p_v(x,y)\,\mu(\diff x)\diff y\diff v\ \le\ c_2'T\|f\|_\infty^2\mu(\mathcal S)(\lebesgue(\mathcal S))^{\frac2d}.
\] 
\end{proof}

\begin{proof}[Proof of Lemma \ref{prop: poisson}]
By the semigroup property of $(P_t)_{t \geq 0}$ and invariance of $\mu$ we have for any $ t > 1$ and $y \in \R^d$ and $\mu$-a.e.\ $x \in \R^d$,
\begin{align*}
\lvert p_t(x,y) - \rho(y) \rvert &\leq \int_{\R^d} p_1(z,y) \lvert p_{t-1}(x,z) - \rho(z) \rvert \diff{z}\\
&\leq \lVert p_1 \rVert_\infty \int_{\R^d} \lvert p_{t-1}(x,z) - \rho(z) \rvert \diff{z} \\
&= 2\lVert p_1 \rVert_\infty \lVert P_{t-1}(x, \cdot) - \mu\rVert_{\mathrm{TV}} \leq 2\lVert p_1 \rVert_\infty CV(x) \Xi(t-1),
\end{align*}
where the equality follows from Scheff\'{e}'s theorem, see \cite[Lemma 2.1]{tsy09}. Thus for any compact set $\mathcal{S}$  and $r_{\mathcal{S}}(t) = 2C\lVert p_1 \rVert_\infty \rVert_\infty)\sup_{x \in \mathcal{S} \cap \fset} V(x) \Xi(t-1)$ it follows that
\[\int_1^\infty \sup_{x \in \mathcal{S} \cap \fset,y \in \mathcal{S}} \lvert p_t(x,y) -\rho(y)\rvert \diff{t} \leq \int_1^\infty r_{\mathcal{S}}(t) \diff{t}\lesssim \sup_{x \in \mathcal{S} \cap \fset} V(x) \int_0^\infty \Xi(t) < \infty,\]
by local boundedness of $V\one_\fset$ and the convergence assumption on $\Xi$, which yields \ref{ass: conv bound}.
\end{proof}

\begin{proof}[Proof of Lemma \ref{lemma: invariant bounded}]
Let $B\in\mathcal{B}(\R^d)$ such that $\lebesgue(B)=0$. 
Then, it holds that
\begin{align*}
\mu(B)=\int_{\R^d}\int_B p_\Delta(x,y)\d y\,\mu(\mathrm{d}x)=0,
\end{align*}
which yields the existence of a Lebesgue density $\rho$ of $\mu$ by the Radon--Nikodym theorem. 
Now, let $B\in\mathcal{B}(\R^d)$ such that $\lebesgue(B)>0$. 
Arguing as above and using boundedness of $p_\Delta$, we get
\begin{align*}
\frac{\int_B \rho(x) \,\lebesgue(\mathrm{d}x)}{\lebesgue(B)}\leq c.
\end{align*}
Now the Lebesgue differentiation theorem yields $\operatorname{ess\,sup}\rho\leq c,$ and defining 
\[\rho_b(x)=\rho(x)\1_{[0,c]}(\rho(x)), \quad x \in \R^d,\] 
we have $\rho=\rho_b$ almost everywhere and $\rho_b\leq c$ which completes the proof. 
\end{proof}

\begin{proof}[Proof of Proposition \ref{prop:varmulti}]
Let $0<\delta<1\leq D$.
Analogously to the proof of Proposition \ref{prop:varextra}, one can compute that
\begin{align*}
\operatorname{Var}\left(\int_0^T f(X_t)\diff t \right)
&=2\int_0^T(T-v)\iint_{\R^{d\times d}} f(x)f(y)(p_v(x,y)-\rho(y))\,\mu(\diff x)\diff y\diff v\\
&\leq 2T\Vert f\Vert_\infty^2\Big(\int_0^D\iint_{\mathcal{S}^2} p_v(x,y)\,\mu(\diff x)\diff y\diff v+\int_D^T\iint_{\cS^2}(p_v(x,y)-\rho(y))\,\mu(\diff x)\diff y\diff v\Big)\\
&= 2T\Vert f\Vert_\infty^2 (\II_\delta+\II_D+\II_T),
\end{align*}
where $\II_\delta\coloneqq \int_0^\delta\iint_{\mathcal{S}^2} p_v(x,y)\mu(\diff x)\diff y\diff v$, $\II_D\coloneqq \int_\delta^D\iint_{\mathcal{S}^2} p_v(x,y)\mu(\diff x)\diff y\diff v$ and
\[\II_T\ \coloneqq\ \int_D^T\int_{\cS}(P_v(x,\cS)-\mu(\cS))\, \mu(\diff x)\diff v.\]
As before (see \eqref{int0}) and under our additional assumption that $\rho$ is bounded, it holds
\begin{equation}\label{eq: int delta}
\II_\delta \le \mu(\mathcal{S})\delta \leq \lVert\rho \rVert_\infty \lebesgue(\mathcal{S})\delta.
\end{equation}
Furthermore, exploiting the mixing property of $\X$,
\begin{equation} \label{eq: int mix}
\II_T\le\ \int_D^T\int\|P_v(x,\cdot)-\mu(\cdot)\|_{\TV}\mu(\diff x)\diff v\ \le\ c_\kappa\int_D^T\e^{-\kappa v}\diff v\ \le\ \frac{c_\kappa}{\kappa}\e^{-\kappa D}\mathbf{1}_{(D,\infty)}(T).
\end{equation}
By assumption \ref{ass: density bound},
$p_v(x,y)\leq c_2v^{-d/2},$
for $0<t\leq1$. 
Hence, we have $p_{1/2}(x,y)\leq c_2 2^{d/2}\eqqcolon c_{p}$ which implies
\[p_t(x,y)=\int p_{t-1/2}(x,z)p_{1/2}(z,y)\d z \leq c_p,\]
for all $t>1/2$. Since $\delta<1\leq D$, it follows
\[
\int_\delta^Dp_v(x,y)\d v
\leq c_2\int_\delta^1v^{-d/2}\d v+c_pD\1_{(1,\infty)}(D)
\leq c_{\delta,D}\Big(\int_\delta^1v^{-d/2}\d v+D\1_{(1,\infty)}(D)\Big)
\]
for $c_{\delta,D}\coloneqq c_2+ c_p$. For $d\geq 3$, this implies
\begin{equation}
\begin{split}\label{I delta D computation 3+}
\int_\delta^Dp_v(x,y)\d v&\leq c_{\delta,D}\Big(\int_\delta^1v^{-d/2}\d v+D\1_{(1,\infty)}(D)\Big)\\
&\leq c_{\delta,D}\Big((d/2-1)^{-1}\delta^{1-d/2}+D\1_{(1,\infty)}(D)\Big)
\\
&\leq c^\prime_{\delta,D}\Big(\delta^{1-d/2}+D\1_{(1,\infty)}(D)\Big),
\end{split}
\end{equation}
where $c^\prime_{\delta,D}\coloneqq 2c_{\delta,D}$.
Letting $\delta=\lebesgue(\cS)^{2/d}$, $D=(1\lor -\tfrac{2}{\kappa}\log(\lebesgue(\cS)))\land T$, \eqref{I delta D computation 3+} and $\lebesgue(\cS)<1$ imply
\begin{align*}
\int_\delta^Dp_v(x,y)\d v
&\leq c^\prime_{\delta,D}\Big( \lebesgue(\cS)^{2/d-1}+\tfrac{2}{\kappa}\log(\lebesgue(\cS)^{-1}) \Big)
\leq c^\prime_{\delta,D}\Big( \lebesgue(\cS)^{2/d-1}+\tfrac{2}{\kappa(1-2/d)}\lebesgue(\cS)^{2/d-1} \Big)\\
&\leq c^{\prime\prime}_{\delta,D}\lebesgue(\cS)^{2/d-1},\quad\text{ for }c^{\prime\prime}_{\delta,D}\coloneqq c^\prime_{\delta,D}(1+\tfrac{2}{\kappa(1-2/d)}),
\end{align*}
where we have used the well-known inequality $\log x\le nx^{1/n},\, x,n>0$. 
Using Fubini's theorem, this directly implies
\begin{equation}\label{I D bound 3+}
\mathcal{I}_D= \int_\delta^D\iint_{\cS^2} p_v(x,y)\mu(\mathrm{d}x)\d y\d v
\leq  c^{\prime\prime}_{\delta,D}\mu(\cS)\lebesgue(\cS)^{2/d} \leq c^{\prime\prime}_{\delta,D}\lVert \rho \rVert_\infty\lebesgue(\cS)^{2/d+1}
\end{equation}
for $d\geq3$. 
Noting that our choice of $\delta$ and $D$ implies by \eqref{eq: int delta} and \eqref{eq: int mix} that
\[\mathcal{I}_\delta \leq \lVert\rho\rVert_\infty\lebesgue(\cS)^{2\slash d+1}\quad\text{ and }\quad \mathcal{I}_T \leq \frac{c_\kappa}{\kappa} \lebesgue(\cS)^2 \leq \frac{c_\kappa}{\kappa} \lebesgue(\cS)^{2\slash d+1},\]
\eqref{eq: var mix} follows for any $d\geq 3$ by combining these estimates with \eqref{I D bound 3+}.
The case $d=2$ is treated by similar arguments. 
\end{proof}

\section{Proofs for Section \ref{sec: deviation}} \label{app: deviation}
\begin{proof}[Proof of Theorem \ref{Bernstein}]
We start by splitting the process $(X_s)_{0\leq s\leq t}$ with Borel state space $\mathcal{X}$ into $2n_t$ parts of length $m_t$, where $t=2n_tm_t$, $n_t\in\N$, $m_t\in\R_+$.
More precisely, for $j\in\{1,\ldots,n_t\}$, define the processes
\[X^{j,1}\coloneqq (X_s)_{s\in [2(j-1)m_t,(2j-1)m_t]},\quad X^{j,2}\coloneqq (X_s)_{s\in [(2j-1)m_t,2jm_t]}.\]
Since $\X$ is a stationary Markov process, the $\beta$-mixing assumption is equivalent to
\[ \Xi(s) \geq \int_{\R^d} \lVert P_s(x,\cdot) - \mu\rVert_{\mathrm{TV}}\, \mu(\d x) = \E\Big[\lVert \Pro(\cdot \vert \mathcal{F}_0) - \Pro\rVert_{\mathrm{TV} \vert \overline{\mathcal{F}}_{s}} \Big] = \E\Big[\lVert \Pro(\cdot \vert \mathcal{F}_t) - \Pro\rVert_{\mathrm{TV} \vert \overline{\mathcal{F}}_{t+s}} \Big],\]
for any $s,t > 0$, see Proposition 1 in \citep{davydov1973}.
Here, $(\mathcal{F}_t = \sigma(X_s, s\leq t))_{t \geq 0}$ denotes the natural filtration of $\X$, $(\overline{\mathcal{F}}_t = \sigma(X_s, s\geq t))_{t \geq 0}$ the filtration of the future of $\X$ and, for a signed measure $\mu$ and a sub-$\sigma$-algebra $\mathcal{A}$ on a measure space $(\Omega, \mathcal{F})$, $\lVert \mu \rVert_{\mathrm{TV} \vert \mathcal{A}}$ denotes the total variation norm of $\mu$ restricted to $\mathcal{A}$.
As demonstrated in \citep[Lemma 1.4]{volk61},
\[\E\Big[\lVert \Pro(\cdot \vert \mathcal{F}_t) - \Pro\rVert_{\mathrm{TV} \vert \overline{\mathcal{F}}_{t+s}} \Big] = \beta(\mathcal{F}_t, \overline{\mathcal{F}}_{t+s}),\]
where for two sub-$\sigma$-algebras $\mathcal{A}, \mathcal{B} \subset \mathcal{G}$ and a probability measure $\PP$ on $(\Omega,\cG)$, the classical $\beta$-mixing coefficient $\beta(\mathcal{A},\mathcal{B})$ is given by
\[\beta(\mathcal{A},\mathcal{B}) = \sup_{C \in \mathcal{A} \otimes \mathcal{B}} \big\lvert\PP\vert_{\mathcal{A} \otimes \mathcal{B}}(C) - \PP\vert_{\mathcal{A}} \otimes \PP\vert_{\mathcal{B}}(C)\big \rvert.\]
Here, $\PP\vert_{\mathcal{A}\otimes \mathcal{B}}$ is the restriction to $(\Omega \times \Omega, \mathcal{A} \otimes \mathcal{B})$ of the image measure of $\PP$ under the canonical injection $\iota(\omega) = (\omega,\omega).$ Clearly, if $\mathcal{A}_1 \subset \mathcal{A}_2$, we have $\beta(\mathcal{A}_1,\mathcal{B}) \leq \beta(\mathcal{A}_2,\mathcal{B}).$
Observe that $X^{j,1}$, as a mapping from $\Omega$ to $\cX^{[2(j-1)m_t, (2j-1)m_t]}$, 
is both $\mathcal{F}_{(2j-1)m_t}$-measurable and $\overline{\mathcal{F}}_{2(j-1)m_t}$-measurable.
It now follows from the above discussion for $j,k \in \{1,\ldots,n_t\}$, $j<k$, that
\begin{align*}
\beta(X^{j,1},X^{k,1}) \coloneqq \beta(\sigma(X^{j,1}),\sigma(X^{k,1})) &\leq \beta(\mathcal{F}_{(2j-1)m_t}, \overline{\mathcal{F}}_{2(k-1)m_t})\\
&\leq  \Xi((2(k-j)-1)m_t)\leq  \Xi((k-j)m_t).
\end{align*}
In the same way, we obtain $\beta(X^{j,2},X^{k,2}) \leq \Xi((k-j)m_t).$
Arguing as in the proof of Proposition 5.1 of \citep{viennet}, we can then construct a process $(\widehat{X}_s)_{0\leq s\leq t}$ by Berbee's coupling method, such that for $k=1,2,$
\begin{enumerate}
\item $X^{j,k}\overset{(\mathrm{d})}{=}\widehat{X}^{j,k}, $ for all $j\in\{1,\ldots,n_t\},$
\item $\Pro(X^{j,k}\neq\widehat{X}^{j,k})\leq \Xi(m_t)$ for all $j\in\{1,\ldots,n_t\},$
\item $\widehat{X}^{1,k},\ldots,\widehat{X}^{n_t,k}$ are independent,
\end{enumerate}
where $\widehat{X}^{j,k}$ is defined analogously to $X^{j,k}$ for $j\in\{1,\ldots,n_t \}$ and $k=1,2.$
In order to ease the notation, define for $j\in\{1,\ldots,n_t\}$ \[I_g(X^{j,1})\coloneqq \int_{2(j-1)m_t}^{(2j-1)m_t}g(X_s)\d s,\quad I_g(X^{j,2})\coloneqq \int_{(2j-1)m_t}^{2jm_t}g(X_s)\d s,\] and, analogously, define $I_g(\widehat{X}^{j,k})$ for $k=1,2,j\in\{1,\ldots,n_t \}$.
Fix $p\geq1$.
Then,
\begin{equation} \label{eq:decomp}
\begin{split}
\left(\E\left[\|\G_t\|_{\mathcal G}^p\right]\right)^{1/p}&\le
\left(\E\left[\sup_{g \in \mathcal{G}}\Big|\frac{1}{\sqrt{t}}\int_0^tg(\hat X_s)\d s\Big|^p\right]\right)^{1/p}+\left(\E\left[\sup_{g \in \mathcal{G}}\Big|\frac{1}{\sqrt{t}}\int_0^t(g(X_s)-g(\hat X_s))\d s\Big|^p\right]\right)^{1/p}\\
&=\left(\E\left[\sup_{g \in \mathcal{G}}\Big|\frac{1}{\sqrt{t}}\sum_{k=1}^2\sum_{j=1}^{n_t}I_g(\hat X^{j,k})\Big|^p\right]\right)^{1/p}\\
&\hspace*{7em}+\left(\E\left[\sup_{g \in \mathcal{G}}\Big|\frac{1}{\sqrt{t}}\sum_{k=1}^2\sum_{j=1}^{n_t}(I_g(X^{j,k})- I_g(\hat X^{j,k}))\Big|^p\right]\right)^{1/p}.
\end{split}
\end{equation}
The classical Bernstein inequality implies for $u>0$ that
\[
\Pro\left(\Big|\frac{1}{\sqrt{t}}\sum_{j=1}^{n_t}I_g(\hat X^{j,k})\Big|>\sqrt{\frac{2n_t\Var\left(\int_0^{m_t}g(X_s)\d s\right)u}{t}}+\frac{m_t\|g\|_\infty u}{\sqrt{t}}\right)\ \le\ \e^{-u},
\]
which in combination with $2n_t \slash t = 1\slash m_t$ yields
\[
\Pro\left(\Big|\frac{1}{\sqrt{t}}\sum_{j=1}^{n_t}I_g(\hat X^{j,k})\Big|> u\right)\ \le\ \exp\left(-\frac{u^2}{2\Big(\mathrm{Var}\big(\tfrac{1}{\sqrt{m_t}}\int_0^{m_t}g(X_s)\d s\big) + \tfrac{m_t}{\sqrt{t}}\lVert g \rVert_\infty u\Big)}\right), \quad u > 0,
\]
see Theorem 2.10 and Corollary 2.11 in \citep{boucheron2013}.
Consequently, denoting
\begin{equation}\label{def:c1c2}
\tilde c_1\coloneqq 2\e^{1/(2\e)}\sqrt{2\pi}\mathrm{e}^{-11/12}, \quad \tilde c_2\coloneqq2(2\e)^{-1/2}\e^{1/(2\e)}\sqrt\pi\e^{1/6},
\end{equation}
Lemma A.2 in \citep{dirk15} gives, for $k\in\{1,2\}$,
\begin{equation}\label{eq:Ig}
\left(\E\left[\Big|\frac{1}{\sqrt{t}}\sum_{j=1}^{n_t}I_g(\hat X^{j,k})\Big|^p\right]\right)^{1/p}\ \le\ \|g\|_\infty \frac{m_t}{\sqrt{t}} \tilde c_1p+\sqrt{\Var\left(\frac{1}{\sqrt{m_t}}\int_0^{m_t}g(X_s)\d s\right)}\tilde c_2\sqrt p,
\end{equation}
where we used again $2n_t \slash t = 1\slash m_t$. In addition, Theorem 3.5 in \citep{dirk15} implies that there exist positive constants $\tilde C_1,\tilde C_2$ such that
\begin{align}\nonumber
\left(\E\left[\sup_{g\in\mathcal G}\Big|\frac{1}{\sqrt{t}}\sum_{j=1}^{n_t}I_g(\hat X^{j,k})\Big|^p\right]\right)^{1/p}
&\le\frac{\tilde C_1}{2}\int_0^\infty\log\mathcal N\big(u,\mathcal G,\tfrac{m_t}{\sqrt{t}}d_\infty\big)\d u+\frac{\tilde C_2}{2}\int_0^\infty\sqrt{\log \mathcal N(u,\mathcal G,d_{\G,m_t})}\d u\\\label{eq:Igsup}
&\hspace*{10em}+2\sup_{g\in\mathcal G}\left(\E\left[\Big|\frac{1}{\sqrt{t}}\sum_{j=1}^{n_t}I_g(\hat X^{j,k})\Big|^p\right]\right)^{1/p}.
\end{align}
Here, we bounded the $\gamma_\alpha$-functionals appearing in the original statement of the theorem by the corresponding entropy integrals.
Note further that the last term on the rhs of \eqref{eq:decomp} is upper bounded by
\begin{align}\nonumber
\left(\E\left[\sup_{g\in\mathcal G}\Big|\frac{1}{\sqrt{t}}\sum_{k=1}^2\sum_{j=1}^{n_t}\left(I_g(X^{j,k})-I_g(\hat X^{j,k})\right)\cdot\1_{X^{j,k}\ne\hat X^{j,k}}\Big|^p\right]\right)^{1/p}
&\le \frac{4 n_t m_t}{\sqrt{t}}\sup_{g\in\mathcal G}\|g\|_\infty\big(\PP\big(X^{j,k}\ne\hat X^{j,k}\big)\big)^{1/p}\\\label{eq:mix}
&\le 2\sup_{g\in\mathcal G}\|g\|_\infty  \sqrt{t}\Xi(m_t)^{1/p}.
\end{align}
Plugging the upper bounds \eqref{eq:Ig}, \eqref{eq:Igsup} and \eqref{eq:mix} into \eqref{eq:decomp} yields
\begin{equation} \label{eq: unimom2}
\begin{split}
\left(\E\left[\sup_{g\in\mathcal G}|\G_t(g)|^p\right]\right)^{1/p}&\le \tilde C_1\int_0^\infty\log\mathcal N\big(u,\mathcal G,\tfrac{m_t}{\sqrt{t}}d_\infty\big)\d u+\tilde C_2\int_0^\infty\sqrt{\log \mathcal N(u,\mathcal G,d_{\G,m_t})}\diff u\\
&\quad+4\sup_{g\in\mathcal G}\Big(\frac{m_t}{\sqrt{t}}\|g\|_\infty \tilde c_1p+\lVert g\rVert_{\mathbb{G},m_t}\tilde c_2\sqrt p + \frac{1}{2}\lVert g \rVert_{\infty}  \sqrt{t} \Xi(m_t)^{1/p}\Big).
\end{split}
\end{equation}
For general $m_t\in(0,\tfrac{t}{4}],$ let $\widetilde{n}_t=\lfloor \frac{t}{2m_t}\rfloor,$ where $\lfloor x \rfloor$ denotes the largest integer smaller or equal to $x\geq1$.
Then, for $\widetilde{m}_t\coloneqq \frac{t}{2\widetilde{n}_t},$ we have $m_t\leq \widetilde{m}_t$, and from $\widetilde{n}_t\geq \frac{t}{2m_t}-1=\frac{t-2m_t}{2m_t}$ and $m_t\leq \frac{t}{4}$, we get
\begin{align*}
\widetilde{m}_t&=\frac{t}{2\widetilde{n}_t}\leq \frac{tm_t}{t-2m_t}\leq 2m_t.
\end{align*}
Since $\widetilde{n}_t\in\N,$ \eqref{eq: unimom2} holds with $\tau = \widetilde{m}_t \in [m_t,2m_t]$ and $m_t$ being replaced by $\tilde{m}_t$, and combining this with the computations above yields
\begin{align*}
\left(\E\left[\sup_{g\in\mathcal G}|\G_t(g)|^p\right]\right)^{1/p}&\le \tilde C_1\int_0^\infty\log\mathcal N\big(u,\mathcal G,\tfrac{\tau}{\sqrt{t}}d_\infty\big)\d u+\tilde C_2\int_0^\infty\sqrt{\log \mathcal N(u,\mathcal G,d_{\G,\tau})}\diff u\\
&\hspace*{3em}+4\sup_{g\in\mathcal G}\Big(\frac{\tau}{\sqrt{t}}\|g\|_\infty \tilde c_1p+\lVert g\rVert_{\mathbb{G},\tau}\tilde c_2\sqrt p + \frac{1}{2}\lVert g \rVert_{\infty}  \sqrt{t} \Xi(\tau)^{1/p}\Big)\\
&\leq \tilde C_1\int_0^\infty\log\mathcal N\big(u,\mathcal G,\tfrac{2m_t}{\sqrt{t}}d_\infty\big)\d u+\tilde C_2\int_0^\infty\sqrt{\log \mathcal N(u,\mathcal G,d_{\G,\tau})}\diff u\\
&\hspace*{3em}+4\sup_{g\in\mathcal G}\Big(\frac{2m_t}{\sqrt{t}}\|g\|_\infty \tilde c_1p+ \lVert g\rVert_{\mathbb{G},\tau}\tilde c_2\sqrt p +\frac{1}{2}\lVert g \rVert_{\infty}  \sqrt{t} \Xi(m_t)^{1/p}\Big),
\end{align*}
which completes the proof.
\end{proof}

\begin{proof}[Proof of Corollary \ref{theo: neumann}]
In case of exponential $\beta$-mixing we obtain, similarly to the proof of Proposition \ref{prop:varmulti}, for any $t > 0$,
\[\lVert g \rVert_{\mathbb{G},t}^2 = \frac{1}{t}\mathrm{Var}\Big(\int_0^t g(X_s)\diff{s} \Big) \leq 2\lVert g \rVert_\infty^2 \int_0^t \int \lVert P_s(x,\cdot) - \mu \rVert_{\mathrm{TV}} \,\mu(\diff{x})\diff{s} \leq 2\lVert g \rVert_\infty^2\frac{c_\kappa}{\kappa}.\]
Choosing $m_T = \sqrt{T}$ and plugging this into \eqref{eq: unimom} therefore yields the assertion for the exponential mixing case. For the $\alpha$-polynomial case we obtain the assertion similarly by the minimizing choice $m_T = T^{p/(\alpha +p)}$, where $T \geq 4^{(\alpha + p)/\alpha}$ guarantees that $m_T \leq T/4$ and the assumption $\alpha > 1$ is needed to guarantee uniform boundedness of $\lVert g \rVert_{\mathbb{G},t}^2$ in $t$.
\end{proof}

For the proof of the bounds on the stochastic error, we start with the following preparatory lemma that provides bounds of the covering numbers of the function class $\mathcal G$ introduced in \eqref{def:G} with respect to the norms appearing in Theorem \ref{Bernstein}. 
By a slight abuse of notation, we do not distinguish notationally between the $\sup$-norm on $\R^d$ and the function space $\mathcal{B}_b(\R^d)$.
\begin{lemma} \label{lemma: covering numbers}
Let $D \subset \R^d$ be a bounded set and, given some Lipschitz continuous kernel $K$ with Lipschitz constant $L$ and compact support $[-1/2,1/2]^d$, define the function class $\mathcal G$ according to \eqref{def:G}.
Then, for any $\varepsilon > 0$,
\[
\mathcal{N}(\varepsilon, \mathcal{G}, \lVert \cdot \rVert_{d_\infty}) \leq \Big(\frac{4L\mathrm{diam}(D)}{\varepsilon h}\Big)^d,\]
and if moreover $\X \in \bm{\Sigma} \cup \bm{\Theta}$, then there exists a constant $\mathbb{A}> 0$ such that, for any $\varepsilon > 0$ and $t > 0$,
\[\mathcal{N}(\varepsilon, \mathcal{G}, \lVert \cdot \rVert_{\mathbb{G},t}) \leq \Big(\frac{2L\mathrm{diam}(D)\sqrt{\mathbb{A} \lVert \rho \rVert_\infty} h^{d-1} \psi_d(h^d)}{\varepsilon}\Big)^d.\]
\end{lemma}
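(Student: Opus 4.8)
The plan is to deduce both covering-number estimates from a single geometric fact: the index map $\iota_h\colon x\mapsto \overline{K}((x-\cdot)/h)$, sending $D$ into $\mathcal{B}_b(\R^d)$, is Lipschitz with respect to each of the two (semi-)metrics $d_\infty$ and $\|\cdot\|_{\mathbb{G},t}$, with explicit Lipschitz constants. Once this is in place, a minimal $\delta$-net of the bounded index set $D\cap\mathbb{Q}^d$ --- which, being contained in a $\|\cdot\|_\infty$-ball of radius $\operatorname{diam}(D)$, admits one of cardinality at most $\lceil \operatorname{diam}(D)/\delta\rceil^d$ --- maps under $\iota_h$ to an $\varepsilon$-net of $\mathcal{G}$ as soon as $\delta$ times the relevant Lipschitz constant equals $\varepsilon$; bounding the ceiling then produces the stated bounds (which are only meaningful in the regime where the right-hand side exceeds $1$, the complementary case being the trivial $\mathcal{N}=1$ since $\mathcal{G}$ has finite diameter).

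For the $d_\infty$-estimate I would use the Lipschitz assumption on $K$: $|K((x-y)/h)-K((x'-y)/h)|\le (L/h)\|x-x'\|_\infty$ for every $y\in\R^d$, and since $\mu$ is a probability measure the same bound is inherited by the centering constants $\mu(K((x-\cdot)/h))$. Hence $\|\iota_h(x)-\iota_h(x')\|_\infty\le (2L/h)\|x-x'\|_\infty$, and taking $\delta=\varepsilon h/(2L)$ in the net argument gives $\mathcal{N}(\varepsilon,\mathcal{G},d_\infty)\le(4L\operatorname{diam}(D)/(\varepsilon h))^d$.

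The $\|\cdot\|_{\mathbb{G},t}$-estimate is the substantive part. The key observation is that $\iota_h(x)-\iota_h(x')$ differs from the compactly supported function $f_{x,x'}:=K((x-\cdot)/h)-K((x'-\cdot)/h)$ only by an additive constant, so the two centered quantities have the same variance and $\|\iota_h(x)-\iota_h(x')\|_{\mathbb{G},t}^2=\sigma_t^2(f_{x,x'})=t^{-1}\operatorname{Var}(\int_0^t f_{x,x'}(X_s)\,\d s)$. Now $\operatorname{supp}(f_{x,x'})\subset (x+[-h/2,h/2]^d)\cup(x'+[-h/2,h/2]^d)$ has Lebesgue measure at most $2h^d$, which is $<1$ for $h$ small --- the only relevant regime --- so I may invoke Proposition \ref{prop:varmulti} when $\X\in\bm{\Sigma}$ and $d\ge2$, and Proposition \ref{prop:varextra} in the remaining cases (namely $\X\in\bm{\Theta}$, or $\X\in\bm{\Sigma}$ with $d=1$, where \ref{ass: conv bound} is in force). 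Using $\mu(\mathcal{S})\le\|\rho\|_\infty\lebesgue(\mathcal{S})$; the monotonicity \eqref{eq: mon supp} built into the definition of $\bm{\Theta}$ (resp.\ of $\bm{\Sigma}$ for $d=1$) to replace the support-dependent constant $c_{\mathcal{S}}$ by one uniform over the fixed compact that contains all supports arising for $x,x'\in D$ and $h\le1$; and the monotonicity of $\psi_d$ and of $x\mapsto x\psi_d(x)$ on $(0,1)$ to pass from $\lebesgue(\operatorname{supp}f_{x,x'})$ to $2h^d$, one obtains $\operatorname{Var}(\int_0^t f_{x,x'}(X_s)\,\d s)\le \mathbb{A}\,t\,\|\rho\|_\infty h^{2d}\psi_d^2(h^d)\|f_{x,x'}\|_\infty^2$ for a constant $\mathbb{A}>0$ depending only on the process. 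Combining this with $\|f_{x,x'}\|_\infty\le (L/h)\|x-x'\|_\infty$ yields $\|\iota_h(x)-\iota_h(x')\|_{\mathbb{G},t}\le L\sqrt{\mathbb{A}\|\rho\|_\infty}\,h^{d-1}\psi_d(h^d)\,\|x-x'\|_\infty$, and the claimed covering bound follows from the net argument of the first paragraph with $\delta=\varepsilon/(L\sqrt{\mathbb{A}\|\rho\|_\infty}\,h^{d-1}\psi_d(h^d))$.

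The obstacles are essentially bookkeeping: checking $\lebesgue(\operatorname{supp}f_{x,x'})<1$ so that the variance propositions apply (automatic for $h$ small), extracting a single constant from the family $(c_{\mathcal{S}})_{\mathcal{S}}$ --- which is precisely the role of \eqref{eq: mon supp} --- and verifying that the powers of $h$ combine correctly: $\|f_{x,x'}\|_\infty^2$ contributes $h^{-2}$ while $\lebesgue(\operatorname{supp}f_{x,x'})^2$ contributes $h^{2d}$, so that $\|f_{x,x'}\|_\infty^2\lebesgue(\operatorname{supp}f_{x,x'})^2\psi_d^2(\lebesgue(\operatorname{supp}f_{x,x'}))\asymp h^{2d-2}\psi_d^2(h^d)\|x-x'\|_\infty^2$, reproducing the factor $h^{d-1}\psi_d(h^d)$ in the statement after taking square roots.
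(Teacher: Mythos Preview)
Your approach is correct and essentially the same as the paper's: both arguments pull the covering-number bounds back to a $\delta$-net of the bounded index set $D$ via the Lipschitz property of $x\mapsto K((x-\cdot)/h)$, and both invoke Propositions~\ref{prop:varextra}/\ref{prop:varmulti} (together with the monotonicity~\eqref{eq: mon supp}) for the $\|\cdot\|_{\mathbb{G},t}$-part. The only organizational difference is that the paper first records the bound $\|g\|_{\mathbb{G},t}\le \sqrt{\mathbb{A}\|\rho\|_\infty}\,h^d\psi_d(h^d)\|g\|_\infty$ for single elements $g\in\tilde{\mathcal{G}}$ and then passes to $\mathcal{N}(\varepsilon,\tilde{\mathcal{G}},\|\cdot\|_{\mathbb{G},t})\le \mathcal{N}(\varepsilon/c,\tilde{\mathcal{G}},\|\cdot\|_\infty)$, whereas you apply the variance propositions directly to the differences $f_{x,x'}$; your version is in fact slightly more explicit here, since the covering-number reduction requires the variance inequality for $f-g$ (support measure $\le 2h^d$) rather than for individual $g$.
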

\begin{proof}
For $x \in \R^d$, we obtain by Lipschitz continuity of $K$ that
\begin{equation}\label{eq: covering1}
\begin{split}
B_{d_{\infty}}\big(\overline{K}\big((x- \cdot)/h\big) , \varepsilon \big) &= \big\{\overline{K}\big((y- \cdot)/h\big)  : y \in \R^d, \big\lVert \overline{K}\big((x- \cdot)/h\big)  - \overline{K}\big((y- \cdot)/h\big) \big\rVert_\infty \leq \varepsilon\big\} \\
&\supset \big\{\overline{K}\big((y- \cdot)/h\big)  : y \in \R^d, \lVert x-y \rVert_\infty \leq \varepsilon h \slash (2L)\big\}.
\end{split}
\end{equation}
Let $Q \supset D$ be a cube of side length $\mathrm{diam}(D) < \infty$ and choose for
\[\overline{n} \coloneqq \Big(\Big\lfloor \frac{2L\mathrm{diam}(D)}{\varepsilon h} \Big\rfloor\Big)^d\]
points $x_1,\ldots,x_{\overline{n}} \in Q$ such that $\big\{ B_{d_\infty}(x_i, \varepsilon h/(2L) ): i=1,\ldots \overline{n}\big\}$
covers $Q$ and therefore $D$. 
From \eqref{eq: covering1}, it follows that
$\big\{ B_{d_\infty}\big(\overline{K}\big((x_i- \cdot)/h\big) , \varepsilon \big): i=1,\ldots \overline{n}\big\}$
is an external covering of $\mathcal{G}$.
The external covering number $\mathcal{N}_{\mathrm{ext}}(\varepsilon, \mathcal{G},d_\infty)$ is thus bounded by $(2L\mathrm{diam}(D)\slash(\varepsilon h))^d$. 
Hence,
\[\mathcal{N}(\varepsilon,\mathcal{G},d_\infty) \leq \mathcal{N}_{\mathrm{ext}}(\varepsilon \slash 2, \mathcal{G},d_\infty) \leq \Big(\frac{4L\mathrm{diam}(D)}{\varepsilon h}\Big)^d.\]
Similarly, for 
\begin{equation}\label{def: tilde g}
\tilde{\mathcal{G}} = \{K(x-\cdot)\slash h): x \in D \cap \mathbb{Q}^d\},
\end{equation}
 we obtain 
\[\mathcal{N}(\varepsilon,\tilde{\mathcal{G}},d_\infty) \leq \Big(\frac{2L\mathrm{diam}(D)}{\varepsilon h}\Big)^d.\]
The variance term is bounded by means of Propositions \ref{prop:varextra} and \ref{prop:varmulti}, respectively.
In case $d=1$ for $\X \in \bm{\Sigma}$ or any dimension for $\X \in \bm{\Theta}$, boundedness of $\rho$, Proposition \ref{prop:varextra} and \eqref{eq: mon supp} yield that, for $h \in (0,1)$ and some constant $C$ independent of $\lebesgue(\operatorname{supp}(K((x-\cdot)/h)))=h^d$,
\[\operatorname{Var}\left(\int_0^TK\left(\frac{x-X_t}{h}\right)\d t \right)\leq C(1\vee c_{\tilde{D}}) T\Vert K\Vert_\infty^2\Vert\rho\Vert_\infty h^{2d} \psi^2_d(h^d),\]
where $\tilde{D}$ is a compact set containing $D + [-1/2,1/2]^d$. Hence, for any dimension $d$ and $\X \in \bm{\Sigma} \cup \bm{\Theta}$, we obtain together with Proposition \ref{prop:varmulti} that there exists some global constant $\mathbb{A}$ independent of $h$ such that for any $h \in (0,1)$, $t > 0$ and $g \in \tilde{\mathcal{G}}$,
\begin{equation}\label{eq: main0}
\mathrm{Var}\Big(\frac{1}{\sqrt{t}}\int_0^t g(X_s) \diff{s} \Big) \leq \mathbb{A} \lVert g \rVert_\infty^2 \lVert \rho \rVert_\infty h^{2d} \psi_d^2(h^d),
\end{equation}
and hence 
\begin{equation} \label{eq: main1} 
\lVert g \rVert_{\mathbb{G},t} \leq \sqrt{\mathbb A \lVert \rho \rVert_\infty} h^{d} \psi_d(h^d) \lVert g \rVert_\infty.
\end{equation}
Consequently, with the first part of the proof we obtain
\begin{align*}
\mathcal{N}(\varepsilon, \mathcal{G}, \lVert \cdot \rVert_{\mathbb{G},t}) = \mathcal{N}(\varepsilon, \tilde{\mathcal{G}}, \lVert \cdot \rVert_{\mathbb{G},t}) &\leq \mathcal{N}(\varepsilon (\sqrt{\mathbb{A}\lVert \rho \rVert_\infty}h^d\psi_d(h^d))^{-1},\tilde{\mathcal{G}}, \lVert \cdot \rVert_\infty)\\
&\leq \Big(\frac{2L\mathrm{diam}(D)\sqrt{\mathbb{A} \lVert \rho \rVert_\infty} h^{d-1} \psi_d(h^d)}{\varepsilon}\Big)^d.
\end{align*}
\end{proof}

\begin{proof}[Proof of Lemma \ref{dev:ineq}]
Let $\X \in \bm{\Theta} \cup \bm{\Sigma}$. We start with bounding $\E[\sup_{x\in D}\vert\mathbb H_{h,T}(x)\vert^p]$. 
Let $m_T \in (0,T\slash 4]$ and $\tau \in [m_T,2m_T]$ as in Theorem \ref{Bernstein}.
Using \eqref{eq: main1} and $\sup_{f,g\in\tilde{\mathcal G}}\|f- g\|_\infty\le 2\|K\|_\infty$ for $\tilde{\mathcal{G}}$ defined in \eqref{def: tilde g}, we obtain
\begin{equation}\label{eq: mathbb V}
\sup_{f,g\in \tilde{\mathcal G}}\|f-g\|_{\G,\tau}\le \sqrt{\mathbb{A} \lVert \rho \rVert_\infty} \sup_{f,g\in\tilde{\mathcal G}}\|f-g\|_\infty h^d\psi_d(h^d)\le 2\sqrt{\mathbb{A}\lVert \rho \rVert_\infty}\|K\|_\infty h^d\psi_d(h^d)\eqqcolon \mathbb V(h),
\end{equation}
such that $\mathcal N(u,\tilde{\mathcal G},\|\cdot\|_{\G,\tau})=1$ for $u\ge \mathbb V(h)$.
Consequently, using the bound $\int_0^C \sqrt{\log(M\slash u)} \diff{u} \leq 4C \sqrt{\log(M\slash C)}$ provided $\log(M\slash C) \geq 2$, see e.g.\ p.\ 592 of Gin\'e and Nickl \citep{gini09}, and the covering number bound from Lemma \ref{lemma: covering numbers}, it follows for $h \leq \mathrm{e}^{-2}L\mathrm{diam}(D) \slash \lVert K \rVert_\infty$ that
\begin{align*}
\int_0^\infty\sqrt{\log\mathcal N(u,\mathcal G,d_{\G,\tau})} \d u = \int_0^\infty\sqrt{\log\mathcal N(u,\tilde{\mathcal G},d_{\G,\tau})} \d u&\le\int_0^{\mathbb V(h)}\sqrt{d \ \log\left(\frac{L\mathrm{diam}(D)\mathbb V(h)}{u h\lVert K\rVert_\infty}\right)}\d u\\
&\le 2\mathbb{V}(h)\sqrt{d\log\left(\frac{L \mathrm{diam}(D)}{\|K\|_\infty h}\right)}.
\end{align*}
Moreover, since $\sup_{f,g \in \mathcal{G}} \lVert f-g\rVert_\infty \leq 4\lVert K\rVert_\infty$, it follows that $\mathcal{N}(u,\mathcal{G},d_\infty) = 1$ for all $u \geq 4 \lVert K \rVert_\infty$ and hence we obtain by the covering number bound with respect to the $\sup$-norm from Lemma \ref{lemma: covering numbers} and elementary calculations
\[\int_0^\infty\log \mathcal N\big(u,\mathcal G,\tfrac{2m_T}{\sqrt{T}}d_\infty\big)\d u = 2\tfrac{m_T}{\sqrt{T}} \int_0^{4 \lVert K \rVert_\infty}\log \mathcal N(u,\mathcal G,d_\infty)\d u \leq 8\tfrac{m_T}{\sqrt{T}} d \lVert K \rVert_\infty \Big(1 + \log \Big(\frac{L \mathrm{diam}(D)}{\Vert K \rVert_\infty h} \Big) \Big).\]
Denseness of $\mathbb{Q}^d$ in $\R^d$, continuity of $x \mapsto \mathbb{H}_{h,T}(x)$ and Theorem \ref{Bernstein} thus imply for $h \leq \mathrm{e}^{-2}L\mathrm{diam}(D) \slash \lVert K \rVert_\infty$
\begin{equation}
\begin{split}\label{ineq:Hp}
&\left(\E\Big[\sup_{x\in D}|\mathbb{H}_{h,T}(x)|^p\Big]\right)^{1/p}=\left(\E\Big[\sup_{x\in D\cap \mathbb{Q}^d}|\mathbb{H}_{h,T}(x)|^p\Big]\right)^{1/p}\\
&\qquad\le \frac{1}{\sqrt Th^d}\bigg(8\tilde C_1 \frac{m_T}{\sqrt{T}} d \lVert K \rVert_\infty \left(1 + \log \left(\frac{L \mathrm{diam}(D)}{\Vert K \rVert_\infty h} \right) \right)+2\tilde C_2 \mathbb{V}(h)\sqrt{d\log \left(\frac{L \mathrm{diam}(D)}{\Vert K \rVert_\infty h} \right)}\\
&\qquad\hspace*{3em}+16\frac{m_T}{\sqrt{T}}\|K\|_\infty\tilde c_1 p+2\mathbb{V}(h)\tilde c_2\sqrt p + 4\lVert K\rVert_\infty \sqrt{T} \Xi(m_T)^{1/p}\bigg),
\end{split}
\end{equation}
for $\mathbb V(h)$ introduced in \eqref{eq: mathbb V}.
Now, let $p = u_T \geq 1$ be such that $\Xi^{-1}(T^{-u_T}) \in \mathsf{o}(T)$. Then, for $T$ large enough, \eqref{ineq:Hp}, $h \geq T^{-2}$ and $h \in \mathsf{o}(1)$ imply for the choice $m_T = \Xi^{-1}(T^{-u_T})$ that
\begin{align*}
&\E\big[\big\|\hat\rho_{h,T}-\E\hat\rho_{h,T}\big\|_{L^\infty(D)}^{u_T}\big]
\\
&\quad \leq c^{u_T}\bigg(\frac{\log T}{Th^d} \Xi^{-1}(T^{-u_T}) +T^{-\frac 1 2}\psi_d(h^d)\sqrt{\log(h^{-1})}+\frac{u_T}{Th^d} \Xi^{-1}(T^{-u_T}) +T^{-\frac 1 2} \psi_d(h^d)\sqrt{u_T} + h^{-d}T^{-1}\bigg)^{u_T}
\\
&\quad \leq c^{u_T}\bigg(\frac{\log T + u_T}{Th^d} \Xi^{-1}(T^{-u_T}) +T^{-\frac 1 2}\psi_d(h^d)\big(\sqrt{\log (h^{-1})} + \sqrt{u_T}\big)\bigg)^{u_T},
\end{align*}
where the value of the constant $c$ changes from line to line. Hence Markov's inequality implies that there exists some constant $c^\ast > 0$ such that 
\begin{equation} \label{eq: dev1}
\begin{split}
\Pro\left(\big\|\hat\rho_{h,T}-\E\hat\rho_{h,T}\big\|_{L^\infty(D)}\geq c^\ast\left( \frac{u_T + \log T}{Th^d} \Xi^{-1}(T^{-u_T}) +T^{-\frac 1 2}\psi_d(h^d)\sqrt{u_T \vee \log(h^{-1})}\right) \right)\leq \e^{-u_T}.
\end{split}
\end{equation}
Suppose now that $\X \in \bm{\Sigma}$. Then, $\X$ is exponentially $\beta$-mixing, i.e., $\Xi(t) = c_\kappa \mathrm{e}^{-\kappa t}$, where without loss of generality we may assume that $c_\kappa \geq 1$. Then, for any $\gamma > 0$ and $1 \leq u_T \leq \gamma \log T$, it follows from $\Xi^{-1}(T^{-u_T}) \leq u_T \log T\slash \kappa$ and \eqref{eq: dev1} that there exists some constant $c_\gamma > 0$ such that 
\begin{align*}
\Pro\left(\big\|\hat\rho_{h,T}-\E\hat\rho_{h,T}\big\|_{L^\infty(D)}\geq c_\gamma\left( \frac{u_T(\log T)^2}{Th^d} +T^{-\frac 1 2}\psi_d(h^d)\sqrt{u_T \vee \log(h^{-1})}\right) \right)\leq \e^{-u_T}.
\end{align*}
\end{proof}

\section{Proofs for Section \ref{sec: density estimation}}\label{app: density estimation}
\begin{proof}[Proof of Corollary \ref{cor:MSE}]
Fix $x$ such that there exists an open neighbourhood $D$ of $x$ such that $\rho\vert_D \in \mathcal{H}_D(\beta,\mathsf{L})$.
The usual bias-variance decomposition gives
\begin{equation}\label{stat:pytha}
\E\left[\left(\hat\rho_{h,T}(x)-\rho(x)\right)^2\right]=\left(\rho\ast K_h(x)-\rho(x)\right)^2+\Var\left(\hat\rho_{h,T}(x)\right).
\end{equation}
For the bias term, since $\llfloor \beta \rrfloor \leq \ell$, there exists a universal constant $M>0$ such that
\begin{equation} \label{eq: rate1}
\vert(\rho\ast K_h-\rho)(x)\vert=\left|h^{-d}\int K\left(\frac{x-y}{h}\right)(\rho(y)-\rho(x))\d y\right| \le Mh^\beta,
\end{equation}
see Proposition 1.2 in \cite{tsy09} for the case $d=1$ and the analogous estimator for discrete observations, which can be extended to the general multivariate case under continuous observations without much effort.
Moreover, for any dimension $d$ and $\X \in \bm{\Sigma} \cup \bm{\Theta}$, it follows from \eqref{eq: main0} that for any $h \in (0,1)$
\[
\operatorname{Var}\left(\frac{1}{T}\int_0^TK_h(x-X_t)\d t \right)\lesssim T^{-1} \Vert K\Vert_\infty^2\Vert\rho\Vert_\infty \psi_d^2(h^d).
\]
The claim follows by plugging the specific choice of $h$ into \eqref{eq: rate1} and \eqref{eq: main0} and using \eqref{stat:pytha}.
\end{proof}

\begin{proof}[Proof of Theorem \ref{theo:invdens}]
Fix $p\geq1$, and recall the decomposition \eqref{eq: rate0}.
By the assumption on the order of the kernel $K$, the bias term $\rho\ast K_h-\rho$ is bounded by $B(h) \coloneqq Mh^\beta$ for some universal constant $M > 0$ as in the pointwise case (see \eqref{eq: rate1}), while the upper bound on the stochastic error $\mathbb{H}_{h,T}$ relies on a suitable specification on the upper bound in \eqref{ineq:Hp}.
For $d\geq 3$, set $h = h(T) = (\log T\slash T)^{1\slash (2\beta +d -2)}$ and $m_T = p\log T\slash \kappa$ such that
\[\frac{1}{\sqrt{T}} \psi_d(h^d) \in \mathsf{O}\big(T^{-\beta\slash(2\beta + d -2)}\big)\quad\text{ and }\quad
\frac{m_T}{Th^d} = \Big( \tfrac{\log T}{T} \Big)^{\frac{2(\beta-1)}{2(\beta-1)+d}}.\]
Upon noting that $\beta > 2$ implies $2(\beta -1) > \beta$, it follows from \eqref{ineq:Hp} that
\begin{equation}\label{eq: rate3}
\left(\E\Big[\sup_{x\in D}|\mathbb{H}_{h,T}(x)|^p\Big]\right)^{1/p} \in \mathsf{O}\Big(\Big(\tfrac{\log T}{T}\Big)^{\beta\slash(2\beta + d -2)}\Big).
\end{equation}
Since $h^\beta = (\log T \slash T)^{\beta\slash(2\beta + d -2)}$, \eqref{eq: rate0}, \eqref{eq: rate1} and \eqref{eq: rate3} finally give
$\E[\lVert \hat{\rho}_{h,T}-\rho \rVert_{L^\infty(D)}^p]^{1/p}  \in \mathsf{O}\big(\Psi_{d,\beta}(T)\big)$ for $d \geq 3$.
For $d=1$ and $d=2$, the assertion follows by analogous arguments. 

We now proceed with the proof of the convergence rate of the adaptive scheme for $d \geq 3$. For the variance, we obtain from \eqref{ineq:Hp} that, for $m_T\coloneqq 2\log_{(k)} T(\log T)^2/\kappa$ and whenever $h\le \mathrm{e}^{-2}L\mathrm{diam}(D)/\|K\|_\infty$, there exists some constant $\const>0$ such that
\[
\E\left[\left\|\hat\rho_{h,T}-\E \hat\rho_{h,T}\right\|_{L^\infty(D)}^2\right]=
\E\left[\sup_{x\in D}|\mathbb H_{h,T}(x)|^2\right]\le \const^2 \sigma^2(h,T),\]
where $\sigma^2(\cdot,\cdot)$ is defined according to \eqref{def:sigma}.
Define $h_{\rho}$ by the balance equation
\[h_{\rho}\coloneqq \max\bigg\{h\in\H_T: B(h)\le\frac14\sqrt{0.8\M}\sigma(h,T)\bigg\},\quad\text{ where }\M\coloneqq \|\rho\|_{L^\infty(D)}.
\]
This definition implies that $B(h_\rho)\simeq \sqrt{0.8\M}\sigma(h_\rho,T)/4$ and, since $\H_T\ni h_\rho>\left(\frac{\log_{(k)} T(\log T)^{5}}{T}\right)^{\frac{1}{d+2}}$,
\[
h_\rho^{2\beta+d-2}\simeq \frac{\log_{(k)} T\log T}{T}\quad \text{ and }\quad \sigma(h_\rho,T)\simeq \left(\frac{\log_{(k)} T\log T}{T}\right)^{\frac{\beta}{2\beta+d-2}}.
\]
To justify this, define $h_0\coloneqq(\log_{(k)}T\log T/T)^{1/(2\beta+d-2)}$. 
For large enough $T$, we have the bound $\log(\log_{(k)} T \log T) \leq (\log T)/2$ and hence
\begin{align*}
\sigma(h_0,T)&=\frac{\log_{(k)}T(\log T)^2}{Th_0^d}\log(h_0^{-1})+\psi_d(h_0^d)\sqrt{\frac{\log_{(k)}T\log(h_0^{-1})}{T}}\\
&\geq \sqrt{\frac{\log_{(k)}T\log T}{2(2\beta +d -2 )T}}\psi_d(h_0^d)=\sqrt{\frac{1}{2(2\beta +d -2 )}}\left(\frac{\log_{(k)} T\log T}{T}\right)^{\frac{\beta}{2\beta+d-2}}=\mathcal L^{-1} B(h_0),
\end{align*}
for $\mathcal L=\sqrt{2(2\beta + d -2)M^2}$.
Additionally, we get, since $\beta>2$,
\[
\sigma(h_0,T)=\frac{\log_{(k)}T(\log T)^2}{Th_0^d}\log(h_0^{-1})+\psi_d(h_0^d)\sqrt{\frac{\log_{(k)}T\log(h_0^{-1})}{T}}\simeq \left(\frac{\log_{(k)}T\log T}{T}\right)^{\frac{\beta}{2\beta+d-2}}.
\]
In particular, it holds that $h_0\lesssim h_\rho$, which
is clear if $\mathcal L\le\frac14\sqrt{0.8\M}$, and else  follows by the fact that, for any $0<\lambda<1$,
\[
	B(\lambda h_0)=\lambda^\beta B(h_0)\leq \lambda^\beta\mathcal L\sigma(h_0,T)\leq \lambda^\beta\mathcal L\sigma(\lambda h_0,T).
\]
Lastly, we show $h_\rho\lesssim h_0$ by proving $h_\rho^{2\beta+d-2}h_0^{-(2\beta+d-2)}\in\mathsf{O}(1).$ 
Indeed, by the definition of $h_\rho$,
\begin{align*}
h_\rho^{2\beta+d-2}&\lesssim h_\rho^{d-2}\sigma^2(h_\rho,T)
\\
&\lesssim h_\rho^{d-2}\left(\frac{\log_{(k)}T(\log T)^3}{T}h_\rho^{-d}+\psi_d(h_\rho^d)\sqrt{\frac{\log_{(k)}T\log T}{T}}\right)^2\\
&\lesssim \frac{(\log_{(k)}T)^2(\log T)^6}{T^2}h_\rho^{-(2+d)}+h_\rho^{d-2}\psi_d^2(h_\rho^d)\frac{\log_{(k)}T\log T}{T},
\end{align*}
and thus it holds that
\[
h_\rho^{2\beta+d-2} h_0^{-(2\beta+d-2)}\lesssim\frac{\log_{(k)}T(\log T)^5}{T}h_\rho^{-(2+d)}+h_\rho^{d-2}\psi_d^2(h_\rho^d)\in\mathsf{O}(1),
\]
thanks to $h_\rho > (\log_{(k)}T (\log T)^5/T)^{1/(d+2)}$.
\paragraph{Case 1:}
We first consider the case where $\hat h_T\ge h_{\rho}$.
To shorten notation, denote $\tilde{\M}\coloneqq \|\hat\rho_{h_{\min},T}\|_{L^\infty(D)}$.
Then, exploiting the definition of $\hat h_T$ according to \eqref{est:band0} and the bias and variance bounds,
\begin{align*}
&\E\left[\|\hat\rho_{\hat h_T,T}-\rho\|_{L^\infty(D)}\cdot\mathbf{1}_{\{\hat h_T\ge h_{\rho}\}\cap \{\tilde{\M}\le 1.2\M \}}\right]\\
&\hspace*{3em}\le \E\left[\left(\|\hat\rho_{\hat h_T,T}-\hat\rho_{h_{\rho},T}\|_{L^\infty(D)}+\|\hat\rho_{h_{\rho},T}-\E\hat\rho_{h_{\rho},T}\|_{L^\infty(D)}+B(h_\rho)\right)\mathbf{1}_{\{\hat h_T\ge h_{\rho}\}\cap \{\tilde{\M}\le 1.2\M \}}\right]\\
&\hspace*{3em}\le \sqrt{1.2\M}\sigma(h_{\rho},T)+\const\sigma(h_{\rho},T)+\frac14\sqrt{0.8\M}\sigma(h_{\rho},T) \in \mathsf{O}(\sigma(h_{\rho},T)).
\end{align*}
Similarly,
\begin{align*}
&\E\left[\|\hat\rho_{\hat h_T,T}-\rho\|_{L^\infty(D)}\cdot\mathbf{1}_{\{\hat h_T\ge h_{\rho}\}\cap \{\tilde{\M}>1.2\M \}}\right]\\
&\hspace*{3em}\le \sum_{h\in\H_T: h\ge h_{\rho}}\E\left[\left(\|\hat\rho_{h,T}-\E\hat\rho_{h,T}\|_{L^\infty(D)}+B(h)\right)\cdot\mathbf{1}_{\{\hat h_T=h\}\cap \{\tilde{\M}>1.2\M \}}\right]\\
&\hspace*{3em}\lesssim \log T\left(\const\sigma(h_{\rho},T)+B(1)\right)\sqrt{\PP(\tilde{\M}>1.2\M)}.
\end{align*}
Now, for any $T$ large enough,
\begin{equation}\begin{split}\label{ineq:M}
\Pro\left(\big|\tilde{\M}-\M\big|>0.2\|\rho\|_{L^\infty(D)}\right)&=\Pro\left(\big|\|\hat\rho_{h_{\min},T}\|_{L^\infty(D)}-\|\rho\|_{L^\infty(D)}\big|>0.2\M\right)\\
&\le \Pro\left(\big\|\hat\rho_{h_{\min},T}-\rho\big\|_{L^\infty(D)}>0.2\|\rho\|_{L^\infty(D)}\right)\\
&\le \Pro\left(\big\|\hat\rho_{h_{\min},T}-\E\hat\rho_{h_{\min},T}\big\|_{L^\infty(D)}>0.2\|\rho\|_{L^\infty(D)}-B(h_{\min})\right)\\
&\le \Pro\left(\big\|\hat\rho_{h_{\min},T}-\E\hat\rho_{h_{\min},T}\big\|_{L^\infty(D)}>0.1\|\rho\|_{L^\infty(D)}\right)\\
&\le \Pro\left(\big\|\hat\rho_{h_{\min},T}-\E\hat\rho_{h_{\min},T}\big\|_{L^\infty(D)}>\Upsilon_{h_{\min},T}(\log T)\right)\\
&\le T^{-1},
\end{split}\end{equation}
where, for the function $\Upsilon_{h_{\min},T}(\cdot)$ defined according to \eqref{def:Ups}, the last inequality follows from Lemma \ref{dev:ineq} and the last but one inequality holds since there exists some constant $C$ such that
\begin{align*}
\Upsilon_{h_{\min},T}(\log T)&\le CT^{-\frac{2}{d+2}}\left((\log T)^{\frac{6-2d}{d+2}}(\log_{(k)}T)^{-\frac{d}{d+2}}+(\log T)^{\frac{6-3d}{d+2}}(\log_{(k)}T)^{\frac{2-d}{2(d+2)}}\right)\\
&\le 0.2\|\rho\|_{L^\infty(D)},
\end{align*}
for $T$ sufficiently large.
Thus, we can conclude that $\E\left[\|\hat\rho_{\hat h_T,T}-\rho\|_{L^\infty(D)}\cdot\mathbf{1}_{\{\hat h_T\ge h_{\rho}\}}\right] \in \mathsf{O}(\sigma(h_{\rho},T))$.

\paragraph{Case 2:}
For the case $\hat h_T<h_{\rho}$, note first that the previous bias and variance bounds together with \eqref{ineq:M} imply that
\begin{align*}
&\E\left[\|\hat\rho_{\hat h_T,T}-\rho\|_{L^\infty(D)}\cdot\mathbf{1}_{\{\hat h_T< h_{\rho}\}\cap \{\tilde{\M}<0.8\M\}}\right]\\
&\hspace*{3em}\le \sum_{h\in\H_T: h<h_{\rho}}\E\left[\left(\|\hat\rho_{h,T}-\E\hat\rho_{h,T}\|_{L^\infty(D)}+B(h)\right)\cdot\mathbf{1}_{\{\hat h_T=h\}\cap\{\tilde{\M}<0.8\M\}}\right]\\
&\hspace*{3em}\lesssim \log T\left(\const\sigma(h_{\min},T)+B(h_{\rho})\right)\sqrt{\PP(\tilde{\M}<0.8\M)} = O(\sigma(h_{\rho},T)).
\end{align*}
On the other hand,
\begin{align*}
&\E\left[\|\hat\rho_{\hat h_T,T}-\rho\|_{L^\infty(D)}\cdot\mathbf{1}_{\{\hat h_T<h_{\rho}\}\cap \{0.8\M\le\tilde{\M}\}}\right]\\
&\hspace*{3em}\le \sum_{h\in\H_T: h<h_{\rho}}
\E\left[\left(\|\hat\rho_{h,T}-\E\hat\rho_{h,T}\|_{L^\infty(D)}+B(h)\right)\cdot\mathbf{1}_{\{\hat h_T=h\}\cap\{0.8\M\le\tilde{\M}\}}\right]\\
&\hspace*{3em}\le \sum_{h\in\H_T: h<h_{\rho}}
\sqrt{\E\left[\|\hat\rho_{h,T}-\E\hat\rho_{h,T}\|_{L^\infty(D)}^2\right]}\sqrt{\E\left[\mathbf{1}_{\{\hat h_T\ge h_{\rho}\}\cap\{0.8\M\le \tilde{\M}\}}\right]}+B(h_{\rho})\\
&\hspace*{3em}\le \sum_{h\in\H_T: h<h_{\rho}}
\const\sigma(h,T)\sqrt{\P\left(\{\hat h_T\ge h_{\rho}\}\cap\{0.8\M\le\tilde{\M}\}\right)}+\mathsf{O}(\sigma(h_{\rho},T)).
\end{align*}
Pick any $h\in\H_T$ such that $h<h_\rho$ and denote $h^+\coloneqq \min\{g\in\H_T:g>h\} = \eta h$.  
It is then shown as in the proof of Theorem 2 in \cite{gini09} that the verification of the fact that the first sum on the rhs of the last display is of order $\mathsf{O}(\sigma(h_{\rho},T))$ boils down to proving that
\[
\sum_{h\in\H_T: h<h_\rho}\sigma(h,T)\left(\sum_{g\in\H_T: g\le h}\P\left(\big\|\hat\rho_{h^+,T}-\hat\rho_{g,T}\big\|_{L^\infty(D)}>\sqrt{0.8\M}\sigma(g,T)\right)\right)^{1/2}
\in \mathsf{O}(\sigma(h_\rho,T)).
\]
Following again the lines of \cite{gini09}, we obtain
\begin{align*}
\Pro\left(\big\|\hat\rho_{h^+,T}-\hat\rho_{g,T}\big\|_{L^\infty(D)}>\sqrt{0.8\M}\sigma(g,T)\right)&\le
\Pro\left(\big\|\hat\rho_{h^+,T}-\E\hat\rho_{h^+,T}\big\|_{L^\infty(D)}>\frac14\sqrt{0.8\M}\sigma(h^+,T)\right)\\
&\quad+ \Pro\left(\big\|\hat\rho_{g,T}-\E\hat\rho_{g,T}\big\|_{L^\infty(D)}>\frac14\sqrt{0.8\M}\sigma(g,T)\right).
\end{align*}
Let $\gamma \geq 1$.
Clearly, by definition of $\sigma(g,T)$, there exists $T(\gamma) > 0$ such that, for any $T \geq  T(\gamma)$ and any $g \leq h_\rho$, $g \in \H_T$, we have 
\[\frac{1}{4}\sqrt{0.8 \mathcal{M}}\sigma(g,T) \geq c_\gamma \Upsilon_{g,T}(\gamma\log(g^{-1}))=c_\gamma \frac{\gamma\log(g^{-1})(\log T)^2}{Tg^d}+\psi_d(g^d)\sqrt{\frac{\gamma\log(g^{-1})}{T}},\]
where $c_\gamma$ is the constant appearing in Lemma \ref{dev:ineq}. 
Thus, using Lemma \ref{dev:ineq}, we obtain for $T \geq T(\gamma)$ that 
\[\Pro\left(\big\|\hat\rho_{g,T}-\E[\hat\rho_{g,T}]\big\|_{L^\infty(D)}>\frac14\sqrt{0.8\M}\sigma(g,T)\right) \leq \mathrm{e}^{-\gamma \log(g^{-1})} = g^\gamma \eqqcolon \iota_\gamma(g)\]
and hence
\[
\sum_{g\in\H_T:g\le h}\P\left(\big\|\hat\rho_{h^+,T}-\hat\rho_{g,T}\big\|_{L^\infty(D)}>\sqrt{0.8\M}\sigma(g,T)\right)\le \sum_{g\in\H_T:g\le h}(\iota_\gamma(g)+\iota_\gamma(h^+))
\le 2\iota_\gamma(h)\log T.
\]
Thus, choosing $\gamma$ large enough demonstrates that
\begin{align*}
&\sum_{h\in\H_T: h<h_\rho}\sigma(h,T)\left(\sum_{g\in\H_T: g\le h}\P\left(\big\|\hat\rho_{h^+,T}-\hat\rho_{g,T}\big\|_{L^\infty(D)}>\sqrt{0.8\M}\sigma(g,T)\right)\right)^{1/2}\\
&\hspace*{3em} \leq\sum_{h\in\H_T: h<h_\rho}\sigma(h,T)\sqrt{2\iota_\gamma(h)\log T}\leq \sqrt{2h_\rho^\gamma(\log T)^3} \sigma(h_{\min},T) \in \mathsf{O}(\sigma(h_\rho,T)),
\end{align*}
as desired.
\end{proof}

\begin{proof}[Proof of Theorem \ref{theo: ou}]
Let us first verify that under \ref{ou2} the heat kernel bound \ref{ass: density bound} holds. Arguing as in the proof of Theorem 3.2 of Masuda \citep{masuda2004}, we see that $\mathscr{F}\mu$ and $\varphi_{X_t}^x$ are integrable for any $x \in \R$ and $t > 0$ and hence we can obtain the invariant density $\rho$ and the transition  density $p_t$ of $\X$ via inverse Fourier transformation through
\[\rho(y) = \frac{1}{(2\pi)^d} \int_{\R^d} \mathrm{e}^{-\mathrm{i}\langle y, \lambda \rangle} \{ \mathscr{F}\mu \}(\lambda)\diff{\lambda}, \quad y \in \R^d,\]
and
\[p_t(x,y) = \frac{1}{(2\pi)^d} \int_{\R} \mathrm{e}^{-\mathrm{i}\langle y, \lambda \rangle} \varphi_{X_t}^x(\lambda)\diff{\lambda}, \quad x,y \in \R^d, t > 0.\]
Again, as in the proof of Theorem 3.2 in \cite{masuda2004}, it follows that under \ref{ou2}, 
\[
\big \lvert \varphi_{X_t}^x(\lambda) \big\rvert \leq \exp\Big(-\frac{1}{2}\lambda^\top \Big(\int_0^t \mathrm{e}^{-sB} Q \mathrm{e}^{-sB^\top} \diff{s} \Big) \lambda \Big), \quad x,\lambda \in \R^d, t > 0.
\]
Thus, using the characterization of the multivariate normal distribution, we obtain 
\begin{align*} 
p_t(x,y) &\leq \frac{1}{(2\pi)^d} \int_{\R^d} \exp\Big(-\frac{1}{2}\lambda^\top \Big(\int_0^t \mathrm{e}^{-sB} Q \mathrm{e}^{-sB^\top} \diff{s} \Big) \lambda \Big) \diff{\lambda}\\
&= \frac{1}{(2\pi)^{d/2}} \Big(\det\Big(\int_0^t \mathrm{e}^{-sB} Q \mathrm{e}^{-sB^\top} \diff{s} \Big) \Big)^{-1/2}.
\end{align*}
Observing that 
\begin{align*}
\lim_{t \downarrow 0} t^{d/2} \Big(\det\Big(\int_0^t \mathrm{e}^{-sB} Q \mathrm{e}^{-sB^\top} \diff{s} \Big) \Big)^{-1/2} &= \Big(\det\Big(\lim_{t \downarrow 0}\frac{1}{t}\int_0^t \mathrm{e}^{-sB} Q \mathrm{e}^{-sB^\top} \diff{s} \Big) \Big)^{-1/2}\\
&= \det(Q)^{-1/2} < \infty,
\end{align*}
where finiteness is a consequence of invertibility of $Q$ by \ref{ou2}, it follows that for any $d \geq 1$, there exists a constant $c > 0$ such that 
\[\sup_{x,y \in \R^d} p_t(x,y) \leq ct^{-d/2}, \quad t \in (0,1].\]
Thus indeed, for any dimension $d \in \N$, \ref{ass: density bound} holds. 
Next, in scenario \ref{prop: ou2}, \cite[Theorem 4.3]{masuda2004} gives the exponential $\beta$-mixing property and the proof of Theorem 2.6 in \cite{MASUDA200735} along with \cite[Proposition 3.8]{MASUDA200735} yields $V$-exponential ergodicity with $V(x) \sim (1 +\lVert x \rVert^p)$. This together with \eqref{eq: ou heat} entails that in scenario \ref{prop: ou2}, we have $\mathbf{X} \in \bm{\Sigma} \cap \bm{\Theta}$. Finally, $\X \in \bm{\Theta}$ in scenario \ref{prop: ou3} follows from the considerations above and Lemma \ref{prop: poisson} due to the fact that the combination of \ref{ou2} and the logarithmic moment condition imply that every compact set is small and hence petite since $\X$ is strong Feller and by \cite[Theorem 3.1]{jongbloed2005} ergodic (see Proposition \ref{prop2:betamix}) and hence \ref{ou7} implies $V$-polynomial ergodicity of degree $\alpha -1 > 1$ with $V(x) = C(\log \lvert x \rvert)^\alpha$ in dimension $d=1$ by \cite[Corollary 1]{kevei2018}. The statements on the estimation rates are now an immediate consequence of Corollary \ref{cor:MSE} and Theorem \ref{theo:invdens} and the fact that $\rho \in \mathcal{C}^\infty_b$ has arbitrary H\"older smoothness.
\end{proof}

\begin{proof}[Proof of Lemma \ref{Chen and Applebaum assumptions}]
We will employ Theorem 6.2.9 and Exercise 6.4.7 of \citep{applebaum09} to show the first assertion. So first we must verify that  condition \textbf{(C1)} on page 365 of \citep{applebaum09} holds. Since \ref{Lipschitz assumptions} holds, we only have to show that there exists a constant $K_1>0$ such that, for all $x,y\in\R^d$,
\begin{equation*}
\sum_{i,j=1}^{d}(\sigma_{i,j}(x)-\sigma_{i,j}(y))^2+\int_{\R^d}\Vert \gamma(x)z-\gamma(y)z\Vert^2\,\nu(\mathrm{d}z)\leq K_1\Vert x-y\Vert^2 \label{c1 applebaum},
\end{equation*}
where $\sigma_{i,j}(x)$ denotes the components of $\sigma(x)\in\R^{d\times d}$ for any $x\in\R^d$. \ref{Lipschitz assumptions} implies that there exists a finite constant $L_{i,j}>0$ for any $i,j\in\{1,\ldots,d\},$ such that $\sigma_{i,j}\colon \R^d\to \R$ is Lipschitz continuous with Lipschitz constant $L_{i,j}>0$ and hence we have for $x,y\in\R^d$
\begin{align*}
\sum_{i,j=1}^d(\sigma_{i,j}(x)-\sigma_{i,j}(y))^2
&\leq 2d \max_{i,j\in\{1,\ldots,d\}}L_{i,j}^2\Vert x-y\Vert^2.
\end{align*}
Furthermore, we have for $x,y\in\R^d$ by the Lipschitz continuity of $\gamma$
\begin{align*}
\int_{\R^d}\Vert \gamma(x)z-\gamma(y)z\Vert^2\,\nu(\mathrm{d}z)
&\leq L_\gamma^2\Vert x-y\Vert^2\int_{\R^d}\Vert z\Vert^2\,\nu(\mathrm{d}z),
\end{align*}
where we denote the Lipschitz constant of $\gamma$ by $L_\gamma$. 
By \ref{Ergodicity Assumptions}, $\int_{\R^d}\Vert z\Vert^2\,\nu(\mathrm{d}z)$ is finite and hence \textbf{(C1)} holds.
To verify the growth condition \textbf{(C2)} on page 366 of \citep{applebaum09}, we have to show that there exists a constant $K_2$ such that, for all $x\in\R^d$, \[\int_{\R^d} \Vert \gamma(x)z\Vert^2\,\nu(\mathrm{d}z)\leq K_2(1+\Vert x\Vert ^2).\] Since $\gamma$ is Lipschitz continuous by \ref{Lipschitz assumptions}, there exists a constant $K>0$ such that the linear growth condition $\Vert \gamma(x)\Vert \leq K(1+\Vert x\Vert)$ holds for all $x\in\R^d$, and thus we have, for $x\in\R^d$,
\begin{align*}
\int_{\R^d} \Vert \gamma(x)z\Vert^2\,\nu(\mathrm{d}z)
&\leq 2K^2(1+\Vert x\Vert^2)\int_{\R^d}\Vert z\Vert^2\,\nu(\mathrm{d}z).
\end{align*}
Again by \ref{Ergodicity Assumptions}, $\int_{\R^d}\Vert z\Vert^2\,\nu(\mathrm{d}z)$ is finite and hence \textbf{(C2)} holds for $K_2=2K^2\int_{\R^d}\Vert z\Vert^2\,\nu(\mathrm{d}z)$. Since Assumption 6.2.8 in \citep{applebaum09} is trivially fulfilled, the first assertion follows by Theorem 6.2.9 and Exercise 6.4.7 of \citep{applebaum09}.\\ We proceed by showing the second assertion. Equation (1.21) of \citep{CHEN20176576} is in the setting of \eqref{SDE}  equivalent to $\kappa_\alpha(x,z)=\Vert \gamma(x)z\Vert^{d+\alpha}\nu(z)\geq0$ for all $x\in\R^d$ and almost every $z\in\R^d$. Since $\nu$ is a density, this assumption is fulfilled.\\ 
For assumption $(\textbf{H}^a)$ of \citep{CHEN20176576} to hold, we only need to show that there exists a $\beta\in(0,1)$ such that the function $a(x)\coloneqq \sigma(x)\sigma^\top(x)$ is $\beta$-H\"older continuous. However this follows directly from the Lipschitz continuity and the boundedness of $\sigma$ imposed in \ref{Lipschitz assumptions}, as can be seen in the proof of Lemma 1 of \citep{amorino2020invariant}.
Now we note that assumption $(\textbf{H}^\kappa)$ of \citep{CHEN20176576} follows by \ref{Kappa Assumptions}.
\end{proof}

\begin{proof}[Proof of Corollary \ref{coroll: heat jump}]
Since \ref{Lipschitz assumptions} and \ref{Ergodicity Assumptions} imply that $b^*$ is bounded, arguing as in the proof of Lemma 1 of \citep{amorino2020invariant} and using Lemma \ref{Chen and Applebaum assumptions} entails that $b^\ast$ belongs to the Kato class $\mathbb{K}_2$ for $d \geq 2$. For the definition of $\mathbb{K}_2$, see (2.28) in \citep{CHEN20176576}. 
Existence of transition densities and the heat kernel estimate now follow directly from Corollary 1.5 of \citep{CHEN20176576} and Lemma \ref{Chen and Applebaum assumptions} for $d \geq 2$ and as described in Lemma 1 of \citep{amorino2020invariant}, the same conclusions may be drawn for dimension $d=1$ by adapting the arguments in \cite{CHEN20176576}. Now note that \eqref{transition density bounds}, $t\leq 1$ and $\alpha\in(0,2)$ imply
\begin{align*}
p_t(x,y)&\leq C(t^{-d/2}\exp(-\Vert x-y\Vert^2/(\lambda t))+\Vert \kappa_\alpha\Vert_\infty t(\Vert x-y\Vert +t^{1/2})^{-d-\alpha})\\
&\leq C(t^{-d/2}+t^{1-(d+\alpha)/2})\leq Ct^{-d/2},
\end{align*}
where the value of $C$ changes from line to line. This completes the proof. 
\end{proof}

\begin{proof}[Proof of Proposition \ref{Exponential Ergodicity}]
To verify the assertion, we show that the solution of \eqref{SDE} $\X$ satisfies the assumptions of Theorem 2.2 (ii) of \citep{MASUDA200735} which are Assumption 1, 2(a)' and 3* of \citep{MASUDA200735} and \citep{MASUDAerratum}, respectively. 
Assumption 1 follows directly from \ref{Lipschitz assumptions}. 
Now, define $b^\ast_u(x)\coloneqq b^\ast(x)-\int_{u<\Vert z\Vert\leq1} \gamma(x)z\,\nu(\mathrm{d}z)=b(x)-\int_{\Vert z\Vert >u}\gamma(x)z\,\nu(\mathrm{d}z)$, and let the diffusion process $Y^u=(Y^u_t)_{t\geq0}$ be given by \[Y^u_t=x+\int_0^tb^\ast_u(x)(Y^u_s)\d s+\int_0^t\sigma(Y^u_s)\d W_s.\]
For Assumption 2(a)' to be fulfilled, we first have to show that, for any $u\in(0,1)$, there exists $\Delta>0$ such that $\Pro_x(Y^u_\Delta\in B)>0$ for any $x\in\R^d$ and any nonempty open set $B\subset \R^d$.  Since $Y^u$ is a continuous diffusion process with bounded and Lipschitz coeffcients $b^\ast_u, \sigma$ and $a = \sigma \sigma^T$ is uniformly elliptic, it follows from classical results, see e.g. \cite[Theorem A]{sheu1991}, that for any $x \in \R^d$ and $t > 0$, the transition function $P^u_t(x,\cdot)$ of $Y^u$ has a transition density with full support and hence any $\Delta$-skeleton of $Y^u$ is open set irreducible, showing that Assumtpion 2(a)' is in place.
It remains to show that Assumption 3* of \citep{MASUDA200735} is satisfied which is, that there exists a function $V\in Q^*,$ where
\begin{align*}
Q^*\coloneqq \Big\{&f\colon \R^d\to\R_+:  f\in \mathcal{C}^2,f(x)\to\infty \textrm{ as } \Vert x\Vert \to \infty,\textrm{ and there exists a locally bounded,}\\&\textrm{ measurable function $\bar{f}$, }\textrm{such that } \int_{\Vert z\Vert >1} f(x+\gamma(x)z) \,\nu(\mathrm{d} z)\leq \bar{f}(x), \, \forall x\in\R^d   \Big\},
\end{align*}
such that there are constants $c_1,c_2>0,$ for which the Lyapunov drift criterion
\begin{equation}\label{Lyapunov}
\mathsf{A}V\leq -c_1V+c_2
\end{equation}
holds, where $\mathsf{A}$ denotes the extended generator of $\X$ acting on $Q^*$ by
\begin{align*}
\mathsf{A}f(x)&=\qv{\nabla f(x),b^\ast(x)}+\tfrac{1}{2}\operatorname{tr}(\nabla^2f(x)\sigma(x)\sigma^T(x))\\
&\hspace*{3em}+\int_{\R^d}f(x+\gamma(x)z)-f(x)-\mathbf{1}_{\Vert z\Vert \leq 1} \qv{\nabla f(x),\gamma(x)z}\nu(\mathrm{d}z), \quad x \in \R^d, f \in Q^\ast.
\end{align*} 
Now, for $\eta\in(0,\eta_0c_\gamma^{-1}\land 1)$, where $c_\gamma \coloneqq \lVert \gamma \rVert_\infty$, let $V^\eta$ be a positive and increasing function in $\mathcal{C}^2(\R^d,\R)$ such that $V^\eta=\e^{\eta\Vert x\Vert}$ for all $\Vert x\Vert>c_V$, where $c_V>0$. Then, it holds
for $i\neq j \in\{1,\ldots,d\}$ and $\Vert x\Vert >c_V,$
\begin{equation}
\begin{split}\label{Derivatives}
    \partial_i V^\eta(x)&= \eta\e^{\eta\Vert x\Vert}\frac{x_i}{\Vert x\Vert}, \\
    \partial^2_{ij}V^\eta(x)&=\eta^2\e^{\eta\Vert x\Vert}\frac{x_ix_j}{\Vert x\Vert^2}-\eta\e^{\eta\Vert x\Vert}\frac{x_ix_j}{\Vert x\Vert^3} + \eta\e^{\eta\Vert x\Vert}\Vert x\Vert^{-1}\delta_{ij},
\end{split}
\end{equation}
where $\delta_{ij}$ denotes the Kronecker delta. Furthermore, since $V^{\eta}\in \mathcal{C}^2(\R^d;\R)$, for $i,j\in\{1,\ldots,d\}$ the functions $V^{\eta},\partial_iV^{\eta},\partial^2_{ij}V^{\eta}$ are bounded by a constant $c_D>0$ for $\Vert x\Vert \leq c_V$ and
hence
\begin{align*}
\int_{\Vert z\Vert >1} V^\eta(x+\gamma(x)z) \, \nu(\mathrm{d} z)&\leq \int_{\Vert z\Vert >1} \big(\e^{\eta\Vert x+\gamma(x)z\Vert}+c_D \big) \,\nu(\mathrm{d} z)
\\
&\leq \e^{\eta\Vert x\Vert} \int_{\Vert z\Vert >1} \e^{c_\gamma\eta\Vert z\Vert}\nu(\mathrm{d} z)+c_D \nu(\R^d \setminus B_1),
\end{align*}
implying that $V^\eta_\cS\in Q^*$ for all $\eta\leq \tfrac{\eta_0}{c_\gamma}$.
This last condition is satisfied by our choice of $\eta$. To conclude the proof, the only thing left to show is that there exists $0<\eta\leq\tfrac{\eta_0}{c_\gamma}$ such that \eqref{Lyapunov} holds for $V^\eta$. 
Note that, by the mean value theorem, the definition of $b^\ast$ and the Cauchy--Schwarz inequality, we have for any $f \in Q^\ast$
\begin{align*}
\mathsf{A}f(x)
&=\qv{\nabla f(x),b(x)}+\tfrac{1}{2}\operatorname{tr}(\nabla^2f(x)\sigma(x)\sigma^\top (x))+\int_{\R^d}f(x+\gamma(x)z)-f(x)- \qv{\nabla f(x),\gamma(x)z}\nu(\mathrm{d}z)
\\
&\leq \qv{\nabla f(x),b(x)}+\tfrac{1}{2}\operatorname{tr}(\nabla^2f(x)\sigma(x)\sigma^\top (x))\\
&\qquad+\int_{\R^d}\sup_{t\in[0,1]}\Vert\nabla f(x+t\gamma(x)z)-\nabla f(x)\Vert\Vert\gamma(x)z\Vert\nu(\mathrm{d}z)
\\
&\leq \mathsf{A}_cf(x)+\mathsf{A}_df(x),
\end{align*}
where, for $\mathsf{H}^2f(x)$ denoting the Hessian of $f$ evaluated at $x$,
\begin{align*}
\mathsf{A}_cf(x)&\coloneqq\qv{\nabla f(x),b(x)}+\tfrac{1}{2}\operatorname{tr}(\nabla^2f(x)\sigma(x)\sigma^T(x)),\\
\mathsf{A}_df(x)&\coloneqq c_\gamma^2\int_{\R^d}\sup_{t\in[0,1]} \Vert\mathsf{H}^2f(x+t\gamma(x)z)\Vert\Vert z\Vert^2\, \nu(\mathrm{d}z).
\end{align*}
We start by investigating the jump part.
By \eqref{Derivatives} and the fact that the operator norm can be bounded by the Frobenius norm $\Vert \cdot\Vert_F$, we get for $\Vert x\Vert>c_V$
\begin{align*}
\Vert \mathsf{H}^2V^\eta(x)\Vert&\leq \Vert \mathsf{H}^2\e^{\eta\Vert x\Vert}\Vert_F=\left(\sum_{i,j=1}^{d}\left(\eta^2\e^{\eta\Vert x\Vert}\frac{x_ix_j}{\Vert x\Vert^2}-\eta\e^{\eta\Vert x\Vert}\frac{x_ix_j}{\Vert x\Vert^3}+\eta\e^{\eta\Vert x\Vert}\Vert x\Vert^{-1}\delta_{ij}\right)^2 \right)^{\frac{1}{2}}
\\
&\leq 2\eta\e^{\eta\Vert x\Vert}\left(\sum_{i,j=1}^{d}\left(\eta^2\frac{x_i^2x_j^2}{\Vert x\Vert^4}+\frac{x_i^2x_j^2}{\Vert x\Vert^6}+\Vert x\Vert^{-2}\delta_{ij}\right) \right)^{\frac{1}{2}}
\leq 2^{3/2}\sqrt d\eta\e^{\eta\Vert x\Vert}\left(\eta^2+2\Vert x\Vert^{-2}\right)^{\frac{1}{2}}.
\end{align*}
Since we can choose $c_V$ to be large, we can without loss of generality assume $c_V\geq \sqrt{2} \eta^{-1}$ and, additionally, $V^\eta \in \mathcal{C}^2$ implies that there exists a real-valued function $c_\mathsf{H}(\eta)>0$ on $(0,\infty)$ such that $\Vert \mathsf{H}^2V^\eta(x)\Vert< c_\mathsf{H}(\eta)$ for all $\Vert x\Vert \leq c_V$.  Thus, we have 
$\Vert \mathsf{H}^2V^\eta(x)\Vert\leq 4 \sqrt d\eta^2\e^{\eta\Vert x\Vert}+c_\mathsf{H}(\eta)$, $x \in \R^d$,
and we can conclude
\begin{equation}
\label{ineq: Ad}
\begin{split}
\mathsf{A}_dV^\eta(x) 
&\leq 4c_\gamma^2\sqrt d\eta^2\int_{\R^d}\sup_{t\in[0,1]}\e^{\eta\Vert x+t\gamma(x)z\Vert} \Vert z\Vert^2\nu(\mathrm{d}z)+c_\gamma^2c_\mathsf{H}(\eta)\int_{\R^d} \Vert z\Vert^2\nu(\mathrm{d}z)
\\
&\leq \eta^2\e^{\eta\Vert x\Vert}4c_\gamma^2\sqrt d\int_{\R^d}\e^{\eta_0 \Vert z\Vert} \Vert z\Vert^2\nu(\mathrm{d}z)+ c_\gamma^2c_\mathsf{H}(\eta)\int_{\R^d} \Vert z\Vert^2\nu(\mathrm{d}z)\eqqcolon c_{d,1} \eta^2\e^{\eta\Vert x\Vert} +c_{d,2}(\eta),
\end{split}
\end{equation}
where $c_{d,1},c_{d,2}(\eta)$ are positive and finite because of \ref{Ergodicity Assumptions} and $\eta<\eta_0c_\gamma^{-1}$. 
Now we turn our attention to the continuous part. From now on, without loss of generality, we assume that $c_V\geq c_1$ in \ref{Ergodicity Assumptions}. Then, for $\Vert x\Vert>c_V\geq\eta^{-1},$ we have by \ref{Lipschitz assumptions}, \ref{Ergodicity Assumptions} and \eqref{Derivatives}
\begin{align*}
\mathsf{A}_cV^\eta(x)
&\leq-c_1\eta \e^{\eta\Vert x\Vert} +\frac{c_2}{2}\sum_{k=1}^{d}\Big\vert\eta^2\e^{\eta\Vert x\Vert}\frac{x_i^2}{\Vert x\Vert^2}+\eta\e^{\eta\Vert x\Vert}\Vert x\Vert^{-1}-\eta\e^{\eta\Vert x\Vert}\frac{x_i^2}{\Vert x\Vert ^3}\Big\vert
\\
&\leq \eta \e^{\eta\Vert x\Vert}\left(-c_1+ \frac{3c_2d}{2}\eta\right),
\end{align*}
and since $V^\eta\in \mathcal{C}^2(\R^d;\R)$, there exists a real-valued function $c_c(\eta)$ on $(0,\infty)$ such that $\mathsf{A}_cV^\eta(x)\leq c_c(\eta)$ for all $\Vert x\Vert \leq c_V$. Hence, we have
\begin{equation}\label{ineq: Ac}
\mathsf{A}_cV^\eta(x)\leq \eta \e^{\eta\Vert x\Vert}\left(-c_1+ \frac{3c_2d}{2}\eta\right)+c_C(\eta)+c_1\e^{c_V} \eqqcolon \eta \e^{\eta\Vert x\Vert}\left(-c_1+ \frac{3c_2d}{2}\eta\right) + c_{c,1}(\eta),
\end{equation}
where we used that $\eta<1$, by assumption. Combining \eqref{ineq: Ad} and \eqref{ineq: Ac} yields
\begin{align*}
\mathsf{A}V^\eta(x)
&\leq \eta \e^{\eta\Vert x\Vert}\left(-c_1+ \eta\left(\frac{3c_2d}{2} + c_{d,1}\right)\right)+ c_{d,2}(\eta)+c_{c,1}(\eta).
\end{align*}
Choosing 
$\eta^\ast=1\land \eta_0c_\gamma^{-1}\land \frac{c_1}{3c_2d+2c_{d,1}}$ 
implies 
\[\mathsf{A}V^{\eta^*}(x)\leq \frac{c_1\eta^*}{2} \e^{\eta\Vert x\Vert}+ c_{d,2}(\eta^*)+c_{c,1}(\eta^*),\] 
and thus \eqref{Lyapunov} holds for $V^{\eta^*} \in Q^\ast.$ 
Now, Theorem 2.2 (ii) and Proposition 3.8 of \citep{MASUDA200735} show the required assertion.
\end{proof}

\printbibliography
\end{document}